\theoremstyle{plain}
\newtheorem{theorem}{Theorem}[section]
\newtheorem{corollary}[theorem]{Corollary}
\newtheorem{lemma}[theorem]{Lemma}
\newtheorem{proposition}[theorem]{Proposition}
\newtheorem*{lemma*}{Lemma}
\newtheorem*{convention}{Convention}
\theoremstyle{definition}
\newtheorem{definition}[theorem]{Definition}
\theoremstyle{remark}
\newtheorem*{remark}{Remark}
\newtheorem*{question*}{Question}
\newtheorem*{defn*}{Definition}
\theoremstyle{remark}
\newtheorem*{rmk}{Remark}
\newcommand{\N}{\mathbb{N}}
\newcommand{\Z}{\mathbb{Z}}
\newcommand{\R}{\mathbb{R}}
\newcommand{\F}{\mathcal{F}}
\newcommand{\Ends}{\mathrm{Ends}}
\title{Left-orderability of Groups Acting on Bifoliated Planes}
\author[Mauro Camargo-Rios]{Mauro Camargo-Rios}
\address{Cornell University, Ithaca, NY}
\email{mcc326@cornell.edu}
\author[Lingfeng Lu]{Lingfeng Lu}
\address{Queen's University, Kingston, Ontario}
\email{21ll24@queensu.ca}
\begin{document}

\begin{abstract}
    We prove that any group acting faithfully on a bifoliated plane while preserving the orientations of both foliations is left-orderable. The proof utilizes a construction of a linear order on the set of ends of the leaf spaces, which takes advantage of the additional structure coming with a bifoliation. Moreover, we build an identification between ends of leaf spaces of the bifoliation and subsets of the boundary circle at infinity, and use it to give a condition for the equivalence between a faithful group action on a bifoliated plane and a faithful group action on the set of ends of the leaf spaces.
\end{abstract}

\maketitle

\section{Introduction}

A group is \emph{left-orderable} if it admits a linear order that is invariant under left multiplication. The definition is purely algebraic, but left-orderability appears frequently in topological and dynamical contexts. For example, one part of the \emph{L-space conjecture} by Boyer--Gordan--Watson associates the topology of a 3-manifold (in particular, the existence of a taut foliation on the manifold) with left-orderability of its fundamental group, whereas a link to dynamics was given by the classical result stating that a countable group is left-orderable if and only if it acts faithfully on $\R$ by orientation-preserving homeomorphisms. To interested readers, we recommend \cite{navas2010dynamics} for a dynamical approach to properties of left-orderable groups.

The other object of interest in this paper is a \emph{bifoliated plane}.
\begin{definition}
A \emph{bifoliated plane} $(P, \F_1, \F_2)$ consists of a topological plane $P$ equipped with $C^0$ foliations by lines $\F_1, \F_2$ such that:
    \begin{enumerate}
        \item $\F_1, \F_2$ are proper, that is, each leaf is properly embedded in $P$.
        \item $\F_1, \F_2$ are transverse.
    \end{enumerate}    
\end{definition}

From the definition, $\F_1$ and $\F_2$ are non-singular and orientable, and we will always assume that they are oriented. We prove the following result regarding left-orderability of groups acting on bifoliated planes:
 
\begin{theorem}\label{theorem: main}
    Let $P = (P, \F_1, \F_2)$ be a bifoliated plane. Let $\mathrm{Aut^+}(P, \F_1, \F_2) $ be the group of homeomorphisms of $P$ that preserves foliations $\F_1$ and $\F_2$ as well as their orientations. Then, $\mathrm{Aut}^+(P, \F_1, \F_2)$ is left-orderable.
    \end{theorem}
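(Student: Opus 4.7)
My plan is to apply the standard algebraic criterion: a group $G$ is left-orderable if and only if it admits a faithful order-preserving action on a totally ordered set $(X, <)$. Indeed, given such an action, after fixing any well-ordering of $X$, one defines $g < h$ to mean $g \cdot x < h \cdot x$ at the first $x$ (in the well-order) where $g \cdot x \neq h \cdot x$; faithfulness guarantees trichotomy and order-preservation yields left-invariance. The task therefore reduces to producing such a totally ordered set $X$ from the bifoliated plane together with a faithful order-preserving $G$-action, where $G = \mathrm{Aut}^+(P, \F_1, \F_2)$.

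The natural candidate for $X$ is the set of ends of the leaf spaces $L_i := P/\F_i$, $i = 1, 2$. Each $L_i$ is a simply connected, oriented, generally non-Hausdorff 1-manifold on which $G$ acts by orientation-preserving homeomorphisms, hence $G$ acts on the set $E$ of their ends. In the degenerate case where both $L_i$ are Hausdorff (and so homeomorphic to $\R$) a linear order on $E$ is automatic from orientation, and the classical theory of orientation-preserving $\R$-actions applies. In general the orientation gives only a partial order on $E$, since non-Hausdorff branching produces incomparable ends. This is where the bifoliated structure enters: each leaf of $\F_j$ projects to a monotone arc in $L_i$ (for $j \neq i$), so leaves of the transverse foliation provide canonical ``bridges'' between branches of $L_i$ and permit comparison of ends that the orientation alone cannot distinguish. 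I would use this to define a total order on $E$, most likely by encoding each end as a coherent family of leaves of the opposite foliation and then comparing two ends through the common reference provided by such families.

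Once the order is in place, order-preservation of the $G$-action is immediate because elements of $G$ preserve both foliations and their orientations. The remaining issue is faithfulness of the $G$-action on $E$: an element $g \in G$ fixing every end must be shown to be the identity on $P$. For this, I would invoke the identification between $E$ and a subset of the boundary circle at infinity (advertised in the abstract), together with the fact that an automorphism of the bifoliation is determined by its behavior on leaves, and that leaves are in turn pinned down by the ends lying in their closure. The main obstacle is twofold: defining the end order in a canonical, $G$-equivariant, total way despite the branching of the leaf spaces, and then ruling out non-trivial automorphisms that fix every end. With both in hand, the lexicographic recipe produces the desired left-invariant order on $G$ (via well-ordering, hence the axiom of choice, since $G$ need not be countable).
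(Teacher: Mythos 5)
There is a genuine gap at the final step. Your plan hinges on showing that the $G$-action on the set $E$ of ends of the leaf spaces is faithful, so that the criterion ``faithful order-preserving action on a totally ordered set $\Rightarrow$ left-orderable'' applies directly. But this faithfulness fails in general. Take the trivial bifoliated plane $\R^2$ with the horizontal and vertical foliations: both leaf spaces are homeomorphic to $\R$, so $E$ has exactly four elements, yet $\mathrm{Aut}^+$ contains (at least) $\mathrm{Homeo}_+(\R)\times\mathrm{Homeo}_+(\R)$, all of which acts trivially on $E$. Your proposed rescue --- that a leaf is ``pinned down by the ends lying in its closure'' --- is exactly what breaks: uncountably many leaves share the same pair of ends of the leaf space, so fixing every end does not fix any individual leaf. (The paper's Theorem 1.4 makes the faithfulness on ends conditional on a density hypothesis for point-type realizations precisely because of this phenomenon.)

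The paper's proof shares your first step --- an $\mathrm{Aut}^+(P)$-invariant total order on $\mathrm{Ends}(\Lambda_1)\cup\mathrm{Ends}(\Lambda_2)$, built by ordering cataclysms via the transverse foliation and feeding this into Zhao's construction --- but then handles the non-faithfulness head-on. Let $H\le G$ be the kernel of the action on ends. Then $G/H$ acts faithfully and order-preservingly on the ordered set of ends, hence is left-orderable. Separately, $H$ is shown to be left-orderable: one covers each leaf space by countably many embedded copies of $\R$, each determined by the (positive, negative) pair of ends it joins; since $H$ fixes all ends and the leaf spaces are simply connected, $H$ preserves each such copy, yielding an injection $H\hookrightarrow\prod\mathrm{Homeo}_+(\R)$ (injective because fixing every leaf of both foliations forces fixing every intersection point). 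Finally, an extension of a left-orderable group by a left-orderable group is left-orderable, which gives the order on $G$. To repair your argument you would need to add this treatment of the kernel, or some equivalent mechanism; the direct route through faithfulness on ends cannot work.
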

\begin{remark}
    For the sake of brevity, when the foliations $\F_1$ and $\F_2$ on the plane $P$ are clear from the context, we will refer to $\mathrm{Aut}^+(P, \F_1, \F_2)$ as $\mathrm{Aut}^+(P)$.
\end{remark}

As a consequence, since left-orderability of a group is inherited by its subgroups, one can see that any group acting faithfully on a bifoliated plane is left-orderable. Additionally, if the group is countable, then it can be realized as a subgroup of $\mathrm{Homeo^+}(\R)$.

\begin{remark}
    Theorem \ref{theorem: main} also implies that if $G$ is any group acting faithfully on a bifoliated plane, then it has a subgroup of index at most $4$ (which preserves orientations of both $\F_1$ and $\F_2$) that is left-orderable. 
\end{remark}

Foliations appear naturally and frequently in the study of dynamics, geometry and topology in three dimensions. For example, Fenley and Potrie (\cite{fenley2023intersection}, \cite{fenley2023transverse}) recently studied flows that are intersections of two transverse 2-dimensional foliations on closed 3-manifolds, and proved applications to partially hyperbolic diffeomorphisms when the orbit spaces of those flows are Hausdorff, i.e., bifoliated planes. To further motivate the study of bifoliated planes, consider the classical example: given an Anosov flow $\varphi_t$ on a $3$-manifold $M$, the orbit space $P_\varphi$ of its lift $\Tilde{\varphi_t}$ to the universal cover $\Tilde{M}$ of $M$ can be shown to be homeomorphic to $\R^2$ (\cite{fenley1994anosov}), and a pair of transverse foliations by lines, $\F^s$ and $\F^u$, is obtained from the weak stable and weak unstable foliations of the flow $\Tilde{\varphi}_t$. Thus, $(P_\varphi, \F^s, \F^u)$ is a bifoliated plane, which is additionally equipped with an action of $\pi_1(M)$ induced by the deck group action on $\tilde{M}$. It was shown in \cite{barthelme2022orbit} that, by studying the action of $\pi_1(M)$ on both $(P_\varphi, \F^+, \F^-)$, one can classify (pseudo-)Anosov flows up to orbit equivalence. This motivates the study of bifoliated planes as well as groups acting on them.

%Fenley \cite{fenley2012ideal} , which encodes much of the dynamical information about the flow. 
The proof of Theorem \ref{theorem: main} involves introducing an order on the set of \emph{ends}, which are equivalence classes of rays, in the leaf spaces of the bifoliation. This was inspired by Zhao's work in \cite{zhao}, where he showed that for a $3$-manifold $M$ admitting a taut foliation with \emph{orderable cataclysms}, the fundamental group $\pi_1(M)$ is left-orderable. In general, given an action of a group $G$ on a non-Hausdorff $1$-manifold $\Lambda$, it is not the case that $\Lambda$ always has orderable cataclysms with respect to this action (see for instance Example 3.7 in \cite{calegari2003laminations}). However, with the extra structure coming from a bifoliation, there is a natural way of defining an order on every cataclysm in each leaf space that is preserved by actions of $G$.

In the setting of general bifoliated planes (not necessarily coming from an Anosov flow), Bonatti (\cite{bonatti}) defined the canonical \emph{boundary circle at infinity} via ideas similar to the work of Mather in \cite{mather}. The boundary at infinity is homeomorphic to a circle and compactifies the plane into a closed disk. Fenley (\cite{fenley2012ideal}) had earlier defined the notion of circle at infinity for the bifoliated plane associated to an Anosov flow. Given an action by a group $G$ on a bifoliated plane, there is a natural induced action on its circle at infinity. In the case of \emph{Anosov-like} actions on bifoliated planes (which include the actions associated to an Anosov flow), the study of this induced action on the circle at infinity was a key ingredient in \cite{barthelme2022orbit}.

Since rays of leaves limit onto points of the boundary circle, it is natural to ask whether rays in the leaf spaces of the foliations limit to points on the boundary circle in a meaningful way. We answer this question, describing a natural correspondence between the set of ends of leaf spaces of a bifoliated plane and a subset of the associated boundary circle at infinity $\partial P$ -- we call this the \emph{realization} of ends. We will show that there are two \emph{types} of realizations: the point-type and the interval-type. The point-type realization comes from a sequence of leaves with their \emph{ideal points} converging to the same limit, while the interval-type realization comes from a local bifoliation structure called the \emph{infinite product region}. One application of this correspondence is that under certain conditions, we may obtain a faithful action on the bifoliated plane from a faithful action on the set of ends, which a priori asks for less information:

\begin{theorem}\label{theorem: faithful}
    Let $G$ be a group that acts on $P = (P, \F_1, \F_2)$ by homeomorphisms, preserving foliations and their orientations. The action by $G$ on $P$ is faithful if and only if the induced action on $\partial P$ is faithful. Moreover, if the set of point-type realizations of ends is dense in $\partial P$, then the action on $P$ is faithful if and only if the induced action on ends in leaf spaces is faithful.
\end{theorem}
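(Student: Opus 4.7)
The plan is to reduce each equivalence to the claim that a group element acting trivially on the ``smaller'' space---either $\partial P$ or the set of ends of the leaf spaces---must act trivially on $P$ itself. The converse direction in both equivalences is automatic: since $\partial P$ and the set of ends are constructed functorially from the bifoliated structure on $P$, any $g \in G$ that fixes $P$ pointwise acts trivially on both, so faithfulness of the induced action on either forces faithfulness on $P$.

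For the hard direction of the first equivalence, suppose $g \in G$ acts trivially on $\partial P$. I first use the fact (to be extracted from the compactification constructed earlier in the paper) that each leaf $\ell \in \F_i$ has two distinct ideal endpoints in $\partial P$, and that this unordered pair determines $\ell$ uniquely. Since $g$ preserves foliations and fixes both endpoints, $g(\ell)$ is a leaf of $\F_i$ with the same endpoint pair, forcing $g(\ell) = \ell$ setwise. Next, for any $p \in P$ lying on leaves $\ell_1 \in \F_1$ and $\ell_2 \in \F_2$, transversality forces $\ell_1 \cap \ell_2 = \{p\}$: otherwise one would obtain a bigonal disk bounded by two transverse arcs, violating the Poincar\'e--Hopf obstruction to a pair of non-singular transverse foliations on a disk. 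Hence $g(p) \in g(\ell_1) \cap g(\ell_2) = \ell_1 \cap \ell_2 = \{p\}$, so $g = \id_P$.

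For the second equivalence, assume the point-type realizations are dense in $\partial P$ and $g \in G$ acts trivially on the set of ends. Then $g$ fixes every realization of every end, in particular every point-type realization. Since these are dense in $\partial P$ and $g$ acts continuously on $\partial P$, we conclude $g$ acts trivially on $\partial P$, and the previous paragraph yields $g = \id_P$.

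The main obstacle will be establishing the two leaf-to-endpoint inputs used above: that each leaf has a well-defined pair of distinct ideal endpoints in $\partial P$, and that the leaf is determined uniquely by this pair. These should follow from the explicit realization map built earlier in the paper, but they are the crucial geometric ingredients on which the whole argument hinges; the remainder is a matter of continuity together with a standard Poincar\'e--Hopf argument.
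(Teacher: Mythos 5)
Your proposal is correct and follows essentially the same route as the paper: both reduce each equivalence to showing that an element acting trivially on $\partial P$ (respectively, on the ends, using density of the point-type realizations, equivariance of the realization map, and continuity) must fix every leaf and hence every point of $P$, via the facts that a leaf is determined by its pair of ideal points and that transverse leaves meet in exactly one point. The two ``leaf-to-endpoint'' inputs you flag as the crucial unproven ingredients are likewise taken for granted in the paper (they come from Bonatti's construction of the compactification), so there is no gap relative to the paper's own argument.
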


Similar to the result for groups acting on $\R$ and left-orderability, another classical result states that a group is circularly orderable if it acts faithfully on $\mathbb{S}^1$ by orientation-preserving homeomorphisms. Under no additional assumption, circularly orderable groups are not always left-orderable, since the action on $\mathbb{S}^1$ does not always lift to an action on $\mathbb{R}$. In \cite{bell2021promoting}, Bell--Clay--Ghaswala gives an algebraic criterion for promoting circular orderability to left-orderability: a group $G$ is left-orderable if and only if $G \times \Z \addslash n\Z$ is circularly orderable for $n > 1$. However, with Theorem \ref{theorem: main} and the first part of Theorem \ref{theorem: faithful}, we know that the condition is met by groups acting faithfully (preserving foliations and orientations) on a bifoliated plane. \par

\subsection*{Outline} This paper is structured in the following way: In Section \ref{section: prelim}, we give necessary background on 1-manifolds, discuss Zhao's construction (Subsection \ref{subsection: zhao}) and extend it to the setting of bifoliated planes (Subsection \ref{subsection: bifoliated planes}); The proof of Theorem \ref{theorem: main} is presented in Section \ref{section: main}; In Section \ref{section: circle at infinity}, we start by defining the boundary circle at infinity, then we describe behaviors and structures of various objects on a bifoliated plane, construct the aforementioned correspondence, and prove Theorem \ref{theorem: faithful}.

\subsection*{Acknowledgements}
The authors are grateful to Thomas Barthelm\'{e} and Kathryn Mann for their guidance and many valuable comments.

\section{Preliminaries}\label{section: prelim}

\subsection{1-manifolds}\label{subsection: zhao}

Let $\Lambda$ be a non-Hausdorff and simply connected $1$-manifold, equipped with an action by a group $G$. Since simply connected manifolds are orientable, we will also assume $\Lambda$ to be oriented. We start by briefly stating some results concerning $\Lambda$. Most importantly, we will later make use of the fact that if $\Lambda$ has \emph{orderable cataclysms} with respect to the action of $G$, there exists a $G$-invariant linear order on $\mathrm{Ends}(\Lambda)$, the space of ends of $\Lambda$. This is a result of Zhao in \cite{zhao}.

We begin by stating the definition of (orderable) cataclysms. Recall that two points in a topological space are said to be \emph{non-separated} if any neighborhood of one of them intersects every neighborhood of the other.

\begin{definition}
   A \emph{cataclysm} in $\Lambda$ is a maximal collection of pairwise non-separated points. 

   Given an action of a group $G$ on $\Lambda$ by homeomorphisms, we say that $\Lambda$ has \emph{orderable cataclysms with respect to the action of} $G$ if there exists a $G$-invariant linear order in each cataclysm $\mu$ of $\Lambda$.
\end{definition}

This definition was made originally by Calegari and Dunfield in \cite{calegari2003laminations}, in the more general context of laminations. \par

It is a well known fact that a group acting faithfully on a linearly ordered set in an order-preserving way must be left-orderable. In \cite{zhao}, Zhao shows that given a simply connected and orientable 1-manifold $\Lambda$ having orderable cataclysms with respect to an action by a group $G$, there exists one such faithful order-preserving action for $G$. The linearly ordered set on which $G$ acts in an order-preserving way is the set of \emph{positive ends} of $\mathrm{\Lambda}$. The distinction between positive and negative ends of $\Lambda$ is possible since $\Lambda$ is assumed to be oriented.

\begin{definition}\label{def: rays and ends}
    A \emph{ray} in $\Lambda$ is a proper embedding $r: [0, +\infty) \to \Lambda$. Let $\mathcal{E}$ be the set of rays in $\Lambda$, and let $\sim$ be the equivalence relation on $\mathcal{E}$ such that two rays are equivalent if and only if they can be reparametrized so that they agree after a certain point. Then
    $$\Ends(\Lambda) \coloneqq \mathcal{E}/\sim$$
    is the set of \emph{ends} of $\Lambda$. An end is said to be \emph{positive} if a ray (hence all rays) representing it is orientation preserving, where we give $[0, +\infty)$ the standard orientation. Otherwise, the end is said to be \emph{negative}. Sets of positive and negative ends are denoted by $\Ends_+(\Lambda)$ and $\Ends_-(\Lambda)$, respectively.
\end{definition}
It is clear from the definition of $\mathrm{Ends}(\Lambda)$ that an action of $G$ on $\Lambda$ by homeomorphisms induces an action of $G$ on $\mathrm{Ends}(\Lambda)$. Moreover, if the original action is orientation preserving, then the set of positive ends is invariant under this action.

In the proof of our main theorem, we will make use of the following result.

\begin{proposition}[\cite{zhao}]\label{order_zhao}
    If $\Lambda$ has orderable cataclysms with respect to the action of a group $G$, then there exists a left-order on the set $\Ends_+(\Lambda)$ that is preserved by the induced action of $G$.
\end{proposition}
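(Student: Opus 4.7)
The strategy is to associate, to each ordered pair of distinct positive ends $e_1 \ne e_2$, a canonical \emph{divergence cataclysm} $\mu = \mu(e_1, e_2)$ together with two non-separated points $a_1, a_2 \in \mu$ that naturally come from $e_1$ and $e_2$, and then to declare $e_1 < e_2$ iff $a_1 <_\mu a_2$ under the given $G$-invariant order on $\mu$. Intuitively, $\mu$ is the cataclysm ``where the two ends branch apart,'' so that the comparison reduces to comparing which branch each end takes there.

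To build $\mu$, I would choose representatives $r_1 \in e_1$ and $r_2 \in e_2$ starting at a common base point $p$; this is possible since any two points of $\Lambda$ can be joined by an embedded arc and the rays retrimmed. Set $t_0 = \sup\{\, t \ge 0 : r_1|_{[0,t]} = r_2|_{[0,t]} \,\}$, which is finite because $e_1 \ne e_2$. The points $a_i := r_i(t_0)$ are distinct: if $a_1 = a_2$, then in a local Euclidean chart around that common point both embeddings enter from the same side (they agree on $[0,t_0)$) and, being injective, must continue forward on the same side, hence would agree past $t_0$, contradicting maximality. They are non-separated, since both are limits as $t \nearrow t_0$ of the common subray $r_1|_{[0,t_0)} = r_2|_{[0,t_0)}$. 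Hence $a_1, a_2$ lie in a common cataclysm $\mu$.

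The remaining points to verify are \emph{well-definedness} of the comparison (independence of $r_1$, $r_2$, and $p$) and \emph{transitivity}; I expect transitivity to be the main obstacle. Well-definedness should follow from the simple-connectivity of $\Lambda$: the branch choice a ray to $e_i$ makes at each cataclysm it encounters is intrinsic to $e_i$, so any other representative selects the same element of $\mu$. For transitivity, given $e_1 < e_2 < e_3$, I would fix simultaneous representatives $r_1, r_2, r_3$ from one base point and compare the three divergence times $s_{12}, s_{13}, s_{23}$. A short case analysis, using that when two rays agree on a longer interval the divergence cataclysm is inherited by any third ray, shows that either two of the cataclysms $\mu(e_i,e_j)$ actually coincide (with one of the $a_k$ shared between the two comparisons), or, in the degenerate case where all three ends split at the same time, the three divergence points all lie in one cataclysm; in every case the desired inequality $e_1 < e_3$ reduces to the linearity of the order on a single cataclysm, which holds by hypothesis. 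Finally, $G$-invariance is automatic, since any $g \in G$ sends representatives of $e_i$ to representatives of $g e_i$, carries $\mu(e_1,e_2)$ to $\mu(g e_1, g e_2)$, and preserves each cataclysm's order by assumption.
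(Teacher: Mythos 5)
There is a genuine gap, and it occurs at the very first step. You assert that representatives $r_1\in e_1$, $r_2\in e_2$ can be chosen with a common base point ``since any two points of $\Lambda$ can be joined by an embedded arc.'' This is false in a non-Hausdorff $1$-manifold, and the paper says so explicitly: in general there is no embedded path between two points of $\Lambda$, only a \emph{broken path} with some number of cusps. Concretely, take four copies of $\R$ with coordinates $a,b,c,d$, glue $\{a<0\}$ to $\{b<0\}$, glue $\{b>1\}$ to $\{c>1\}$, and glue $\{c<0\}$ to $\{d<0\}$; this is a simply connected, oriented, non-Hausdorff $1$-manifold (of the kind that arises as a leaf space), and the positive ends $a\to+\infty$ and $d\to+\infty$ admit no ray representatives with a common base point: every orientation-preserving ray to the first end starts on the $a$-line, every one to the second starts on the $d$-line, and these are disjoint. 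For such a pair your ``divergence cataclysm'' is undefined, so your relation is not total and hence not a linear order. The underlying issue is that the broken path between two positive ends can have three or more cusps, in which case there is no single cataclysm ``where the ends branch apart''; any repair must aggregate information from all the cusps, which is exactly what Zhao's invariant $n(x_1,x_2)$ (the signed count of positive minus negative cusps along the broken path) does, and what the paper's sketch relies on.

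The local analysis you do carry out is essentially sound where it applies: when a common base point exists, the two divergence points are distinct (your chart argument) and non-separated (two limits of the same net), and in that one-cusp regime your order agrees with Zhao's ($n(e_1,e_2)=\pm 1$ according to the sign of the single cusp). A smaller point to fix even in that regime: you should define $t_0$ by comparing images rather than parametrized restrictions, since equivalent rays need not traverse their common initial segment at the same speed. But the missing idea is the treatment of pairs of ends separated by more than one cusp, and without it the proposal does not prove the proposition.
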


Using the above proposition together with 3-manifold topology, Zhao deduced left-orderability of certain 3-manifold groups:

\begin{theorem}[\cite{zhao}]
     Let $M$ be a closed, connected, orientable, irreducible $3$-manifold. If $M$ admits a taut foliation $\F$ with orderable cataclysm, then $\pi_1(M)$ is left-orderable.
\end{theorem}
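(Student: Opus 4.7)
My plan is to apply Proposition~\ref{order_zhao} to the leaf space of $\tilde{\F}$ and then promote the resulting order-preserving $\pi_1(M)$-action to a left-order on $\pi_1(M)$. Let $\tilde{\F}$ be the lift of $\F$ to $\tilde{M}$. Since $M$ is closed, orientable and irreducible and $\F$ is taut, classical results (Novikov, Palmeira, Rosenberg) imply that $\tilde{M}\cong\R^3$ and that each leaf of $\tilde{\F}$ is a properly embedded plane. Consequently the leaf space $\Lambda := \tilde{M}/\tilde{\F}$ is a simply connected, oriented, possibly non-Hausdorff $1$-manifold, exactly the setting of Subsection~\ref{subsection: zhao}. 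As deck transformations preserve $\tilde{\F}$, the action of $\pi_1(M)$ on $\tilde{M}$ descends to an orientation-preserving action on $\Lambda$.

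By hypothesis this action has orderable cataclysms, so Proposition~\ref{order_zhao} yields a $\pi_1(M)$-invariant linear order on $\Ends_+(\Lambda)$, and hence a homomorphism $\rho \colon \pi_1(M) \to \mathrm{Aut}(\Ends_+(\Lambda),<)$. I would then invoke the standard fact that any group acting faithfully by order-preserving bijections on a linearly ordered set is left-orderable: after fixing an auxiliary well-ordering of $\Ends_+(\Lambda)$, declare $g > h$ whenever $g\cdot \xi_0 > h\cdot \xi_0$, where $\xi_0$ is the least end at which $g$ and $h$ disagree. It therefore remains to verify that $\rho$ is injective.

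Establishing faithfulness of $\rho$ is the main obstacle, and is the step where I expect the topological hypotheses on $M$ to really be used. I would argue it in two stages. First, if $g \in \pi_1(M)$ fixes every positive end of $\Lambda$, then $g$ fixes every leaf of $\tilde{\F}$: for any leaf $L$, a positive ray in $\Lambda$ starting at the point $[L]$ is sent by $g$ to a positive ray starting at $[g(L)]$ and asymptotic to the same end, and a careful analysis of the behavior of rays near cataclysms together with orientation preservation forces $[g(L)] = [L]$. Second, once $g$ preserves every leaf of $\tilde{\F}$ setwise, cocompactness of the deck action, asphericity of $\tilde{M}\cong\R^3$, and the plane topology of individual leaves together force $g = e$; otherwise a nontrivial deck transformation would preserve each contractible plane while being trivially homotopic, contradicting the closed, aspherical structure of $M$. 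Combining the two stages yields injectivity of $\rho$ and hence left-orderability of $\pi_1(M)$.
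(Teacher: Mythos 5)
The paper does not actually prove this statement; it is quoted from \cite{zhao}, so I am comparing your proposal against Zhao's argument as the paper describes and adapts it in Section \ref{section: main}. Your setup is correct --- lifting to $\tilde{M}\cong\R^3$, passing to the simply connected, possibly non-Hausdorff leaf space $\Lambda$, and invoking Proposition \ref{order_zhao} to obtain a $\pi_1(M)$-invariant linear order on $\Ends_+(\Lambda)$ --- but the faithfulness step, which you yourself flag as the main obstacle, is not merely unproven: both of its stages are false. An element fixing every positive end of $\Lambda$ need not fix every leaf, and a deck transformation preserving every leaf setwise need not be trivial. Take $M=T^3$ with the taut foliation by fibers of a fibration $T^3\to S^1$: then $\Lambda\cong\R$, there are no cataclysms (so the hypothesis holds vacuously), $\Ends_+(\Lambda)$ is a single point, and your $\rho$ is the trivial homomorphism; moreover $(x,y,z)\mapsto(x+1,y,z)$ preserves every leaf of the lifted foliation while being a nontrivial deck transformation. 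No analysis of rays near cataclysms can rescue injectivity of $\rho$, because in the Hausdorff case there is nothing left to act on.

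The actual argument does not require faithfulness of the action on ends. Zhao (and the present paper, in its proof of Theorem \ref{theorem: main}) takes $H$ to be the kernel of the action on $\Ends_+(\Lambda)$, left-orders $G/H$ via its faithful order-preserving action on the linearly ordered set of ends, and then shows separately that $H$ is left-orderable: $H$ preserves each member of a countable cover of $\Lambda$ by embedded copies of $\R$, yielding a homomorphism $H\to\prod\mathrm{Homeo}_+(\R)$ into a left-orderable group, with the residual kernel (elements preserving every leaf of $\tilde{\F}$) disposed of by the ``3-manifold topology'' the paper alludes to; the orders on $H$ and $G/H$ are then spliced using the extension lemma quoted from \cite{conrad}. (Alternatively, one can invoke Boyer--Rolfsen--Wiest: for a closed, orientable, irreducible $3$-manifold, a nontrivial homomorphism onto a left-orderable group already implies left-orderability of $\pi_1(M)$, so nontriviality rather than faithfulness is the relevant condition.) You should also address co-orientability of $\F$, which is needed for the action on $\Lambda$ to be orientation-preserving and for the positive/negative splitting of ends to be $\pi_1(M)$-invariant.
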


We briefly explain how Zhao defines the left-order on $\mathrm{Ends}_+(\Lambda)$ mentioned in Proposition \ref{order_zhao}.

Given two distinct points $u, v$ in $\Lambda$, since $\Lambda$ may not be Hausdorff, it is possible that there does not exist an embedded path joining $u$ and $v$. However, for any $u,v$ in $\Lambda$ there exists a unique \emph{broken path} from $u$ to $v$, which consists of a sequence of embedded closed intervals $[a_0, a_1], [a_2, a_3], \dots, [a_{2n}, a_{2n+1}]$ such that $a_0 = u, a_{2n+1} = v$, and where $a_{2i - 1}, a_{2i }$ are contained in the same cataclysm (see \cite{calegari2007foliations}, Subsection 4.3 for details). Each pair of points $(a_{2i - 1}, a_{2i })$ is called a \emph{cusp} and can be either \emph{positive} or \emph{negative}, depending on how points in the pair are ordered within the cataclysm. \par

A broken path between two points of $\Lambda$ can be extended to a broken path between distinct ends. For any distinct positive ends $x_1$, $x_2$ in $\mathrm{Ends}^+(\Lambda)$, Zhao then defined the quantity $n(x_1, x_2)$ to be the difference between the number of positive cusps and the number of negative cusps on the broken path from $x_1$ to $x_2$. It is then shown that $n(x_1, x_2)$ possesses all necessary properties to define a linear order $ \overset{n}{<}$ on $\mathrm{Ends}_+(\Lambda)$ as follows: given $x_1, x_2 $ in $\mathrm{Ends}_+(\Lambda)$,  $x_1 \overset{n}{<} x_2$ if and only if $n(x_1, x_2) < 0$. Finally, it is easy to see from the definition of $n(x_1, x_2)$ that, since $G$ acts on $\Lambda$ by orientation preserving homeomorphisms and $\Lambda$ has orderable cataclysms with respect to this action, this order is $G$-invariant.

\subsection{Bifoliated planes}\label{subsection: bifoliated planes}

Let $P = (P, \F_1, \F_2)$ be a bifoliated plane. Recall that the leaf space of a foliation $\F_i$ is defined to be $\Lambda_i = P / \F_i$, that is the space obtained by identifying each leaf of the foliation to a point, equipped with the quotient topology. We use $q_i \colon P \to \Lambda_i$ to denote the projection map from $P$ to the leaf space $\Lambda_i$ of the foliation $\F_i$. The following is a classical result restated for a bifoliated plane, readers may refer to Appendix D in \cite{candel2000foliations} for the original statement.

\begin{proposition}[\cite{candel2000foliations}]\label{proposition : leaf_spaces}
    If $(P, \F_1, \F_2)$ is a bifoliated plane, then the leaf spaces $\Lambda_1, \Lambda_2$ of $\F_1, \F_2$ respectively, are simply connected and second countable 1-manifolds. Additionally, once one has chosen orientations for $\F_1$ and $\F_2$, these orientations induce orientations on $\Lambda_2$ and $\Lambda_1$, respectively. 
\end{proposition}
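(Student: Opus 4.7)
The plan is to establish the four conclusions — (i) $\Lambda_i$ has the structure of a $1$-manifold, (ii) second countability, (iii) simple connectivity, (iv) induced orientations — by exploiting the fact that transversality of $\F_1$ and $\F_2$ furnishes \emph{bifoliated charts} at every point: an open neighborhood of any $p \in P$ homeomorphic to $(-1,1)\times(-1,1)$ in which the $\F_1$-leaves correspond to horizontal slices and the $\F_2$-leaves to vertical slices. These charts will be the main local tool in every step.

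First, I would verify that $\Lambda_i$ is a (not necessarily Hausdorff) $1$-manifold. The image of a bifoliated chart under $q_i$ is homeomorphic to the open interval $(-1,1)$, and the transition map between the images of two overlapping bifoliated charts is a local homeomorphism of $\R$, since on the overlap two leaves of $\F_i$ either coincide or are disjoint. This produces an atlas on $\Lambda_i$. For second countability, I would observe that $q_i$ is an open map: the saturation of a bifoliated chart by $\F_i$-leaves is open in $P$ by transverse continuity of $\F_i$. Hence any countable basis of $P$ (which exists since $P$ is a topological plane) pushes forward to a countable basis of $\Lambda_i$.

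For simple connectivity, my plan is to use the transverse foliation $\F_j$ to manufacture local sections of $q_i$: in a bifoliated chart, each $\F_j$-plaque is such a section. Given a loop $\gamma \colon S^1 \to \Lambda_i$, I would cover its image by finitely many bifoliated-chart images and glue the local sections along overlaps into a continuous lift $\tilde\gamma \colon S^1 \to P$. Since $P$ is simply connected the lift $\tilde\gamma$ bounds a disk in $P$, and composing this disk with $q_i$ gives a null-homotopy of $\gamma$. Care must be taken around non-separated points of $\Lambda_i$, since two candidate local sections may limit onto distinct leaves; I would handle this by choosing a single lift at one basepoint and using the connected structure of bifoliated chart intersections to extend uniquely. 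For the orientation claim, in a bifoliated chart around $p$ the projection $q_1$ identifies the direction along $\F_2$-leaves with a direction in $\Lambda_1$, so a global orientation of $\F_2$ orients each chart image of $\Lambda_1$ consistently, and symmetrically for $\Lambda_2$.

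The main obstacle will be simple connectivity, because leaf spaces of foliations by lines on $\R^2$ are notoriously non-Hausdorff, and the local lifting procedure must genuinely produce a globally continuous lift even when $\gamma$ passes near non-separated branch points. I expect the cleanest presentation to be a short proof that reduces simple connectivity to the standard statement in Candel--Conlon's Appendix D (referenced in the proposition), with the novelty here being the availability of a canonical transversal foliation $\F_j$ that makes the section-building particularly concrete; the remaining three conclusions should admit routine direct arguments via bifoliated charts.
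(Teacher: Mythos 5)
The paper does not actually prove this proposition: it is quoted from Candel--Conlon (Appendix~D), with only the one-line remark that local transversals to $\F_i$ supply the local Euclidean structure on $\Lambda_i$. So your outline is being measured against the standard argument rather than against a proof in the text. Your overall strategy --- bifoliated charts give local coordinates, openness of $q_i$ gives second countability, path lifting plus simple connectivity of $P$ gives simple connectivity of $\Lambda_i$, and the transverse foliation orients the leaf space --- is the right one. But two steps are asserted exactly where the real content lies.

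First, the claim that the image of a bifoliated chart under $q_i$ is homeomorphic to $(-1,1)$ presupposes that no leaf of $\F_i$ meets the chart in two distinct plaques, equivalently that a transversal arc meets each leaf at most once. This is false for foliations of the annulus, and it is precisely the point at which one must use that $P$ is a plane and the leaves are properly embedded lines (a Jordan-curve / half-Reeb-component argument, i.e.\ Haefliger--Reeb theory; the paper itself proves this fact later as Lemma~\ref{lemma: transversal no double intersection}). Without it your chart maps need not be injective, and the transition-map observation you do make does not substitute for it. Second, in the simple connectivity step the lift of a loop $\gamma\colon S^1\to\Lambda_i$ assembled from local sections will in general not close up: its two endpoints lie on the same leaf of $\F_i$ but need not coincide, so there is no map $\tilde\gamma\colon S^1\to P$ to cap off with a disk. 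The standard fix is to lift $\gamma$ as a path over $[0,1]$, join the endpoints by an arc inside their common leaf (which projects to a constant, hence does not change the homotopy class of the projection), and only then invoke simple connectivity of $P$; this also shows your worry about non-separated points is a non-issue for path lifting. With these two repairs the outline becomes a complete proof.
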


This holds in fact for the leaf space of any proper foliation by lines in $\R^2$, since one only needs existence of local transversals to the foliation in order to show that these leaf spaces are locally Euclidean. Note that the leaf spaces are Hausdorff if and only if they are homeomorphic to $\R$.\par

In this section and in the rest of this paper, 1-manifolds are understood to be second countable (but not necessarily Hausdorff), connected, and simply connected. We also assume that they are oriented whenever it is required. \par

We will first present some definitions and structures on bifoliated planes, as well as some examples, and then build a linear order on the set of all ends -- both positive and negative -- of the leaf spaces of a bifoliated plane.

\begin{rmk}
    Given a bifoliated plane $(P, \F_1, \F_2)$ equipped with an action of a group $G$ preserving the orientations of both foliations, there is an induced action on $\mathrm{Ends}(\Lambda_i) = \mathrm{Ends}_+(\Lambda_i) \cup \mathrm{Ends}_-(\Lambda_i) $ for $i=1,2$, which preserves the positive and negative ends.
\end{rmk}

The following definition describes how leaves of a foliation look in the bifoliated plane when its leaf space is non-Hausdorff.

\begin{definition}
    Two leaves of a foliation $\mathcal{F}$ are said to be \emph{non-separated} if they do not admit disjoint neighborhoods saturated by leaves of $\mathcal{F}$, or equivalently, if their projections in the leaf space are contained in the same cataclysm.
\end{definition}

A saturated neighborhood of a leaf $l \in \F_i$ is given by leaves in $\F_i$ that intersect a transversal to $l$. This implies that two leaves of $\mathcal{F}$ are non-separated if and only if there exists a sequence of leaves of $\mathcal{F}$ that accumulates on both of them. Given $l_1, l_2, l_3 \in \F_i$, if $l_1$ is non-separated from $l_2$ and $l_2$ is non-separated from $l_3$, it is not necessary for $l_1$ and $l_3$ to be non-separated (see Figure \ref{fig: non-separated leaves}).

\begin{figure}[h]
    \labellist
    \small\hair 2pt
    \pinlabel $l_1$ at 80 60
    \pinlabel $l_2$ at 143 78
    \pinlabel $l_3$ at 214 86
    \pinlabel $l_1$ at 393 86
    \pinlabel $l_2$ at 456 -9
    \pinlabel $l_3$ at 478 106
     \endlabellist
    \centering
    \vspace{-2mm}\includegraphics[scale = 0.50]{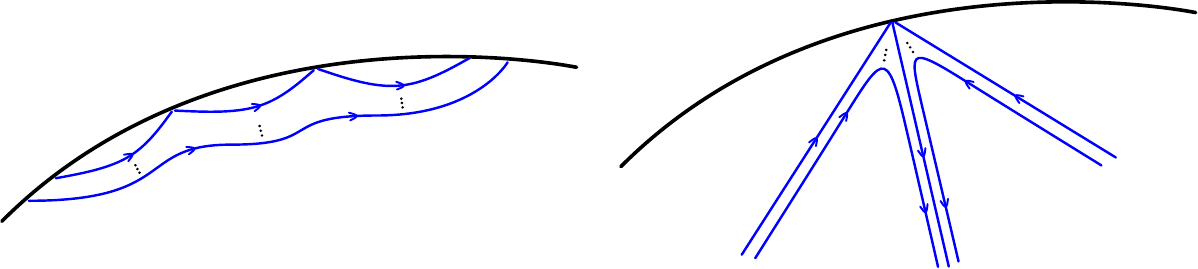}
    \caption{In the left figure, $l_1$, $l_2$, and $l_3$ are pairwise non-separated; in the right figure, $l_1$ and $l_2$ are non-separated, $l_2$ and $l_3$ are non-separated, while $l_1$ and $l_3$ are separated by $l_2$.}
    \label{fig: non-separated leaves}
\end{figure}    

\begin{comment}
    
\begin{definition}
    Let $l \in \F_1$ and $f \in \F_2$. We say that $l$ and $f$ make a \emph{perfect fit} if there exists a transversal $\tau_1$ to $\F_1$ starting at a point on $l$ and a transversal $\tau_2$ to $\F_2$ starting at a point on $f$ such that
    \begin{itemize}
        \item every $s \in \F_1$ that intersects $\tau_1$ also intersects $f$, and
        \item every $t \in \F_2$ that intersects $\tau_2$ also intersects $l$.
    \end{itemize}
\end{definition}

\begin{figure}[h]
    \centering
    \labellist
    \small\hair 2pt
    \pinlabel $l$ at 44 90
    \pinlabel $f$ at 206 23
    \pinlabel $\tau_1$ at 128 74
    \pinlabel $\tau_2$ at 156 68
    \pinlabel $s$ at 27 56
    \pinlabel $t$ at 173 17
    \endlabellist
    \includegraphics[scale = 0.60]{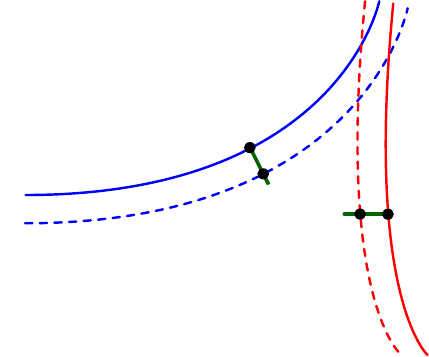}
    \caption{A perfect fit made by $l$ and $f$}
    \label{fig: perfect fit}
\end{figure}

\end{comment}

As mentioned in the introduction, an important class of examples of bifoliated planes equipped with a group action comes from Anosov flows on $3$-manifolds. In that case, given an Anosov flow on a 3-manifold $M$, there is an action by the fundamental group $\pi_1(M)$ induced by the deck group action of $\pi_1(M)$ on the universal cover of $M$.

%The following theorem was proved independently by Barbot (\cite{barbot_1995}) and Fenley (\cite{fenley1994anosov}):

%\begin{theorem}
%Let $M= M^3$, and let $\varphi_t: M \to M$ be an Anosov flow. Then, the orbit space $P_\varphi$ of its lift $\tilde{\varphi}_t : \tilde{M} \to \tilde{M}$ is a topological plane.
%\end{theorem}

%Since the leaves of the weak stable and weak unstable foliations $\tilde{W}^{s}, \tilde{W}^u $ of $\tilde{\varphi}_t$ are properly embedded in $\tilde{M}$ and are topological planes foliated by orbits, they descend to a pair of one-dimensional, proper foliations $\F^s, \F^u$ on $P_\varphi$. Note that any two leaves $\tilde{W}_x^s, \tilde{W}_y^u$ intersect if and only if there exists $z\in \tilde{M}$ such that $\tilde{W}_x^s = \tilde{W}_z^s, \tilde{W}_y^u = \tilde{W}_z^u $. If that is the case, then the fact that the flow is Anosov implies that $\tilde{W}_x^s $ and $ \tilde{W}_y^u$ intersect transversely and exactly along the orbit of $z$. This means that any two leaves of $\F^s, \F^u$ are either disjoint or intersect transversely in a single point of $P_\varphi$. Therefore, $(P_\varphi, \F^s, \F^u)$ is a bifoliated plane. Moreover, the fundamental group $\pi_1(M)$ of $M$ acts on $\tilde{M}$ by deck transformation, which takes orbits to orbits and preserves foliations, so it acts on $(P_\varphi, \F^s, \F^u)$.

\begin{rmk}
    
For any bifoliated plane $(P_\varphi, \F^s, \F^u)$ coming from an Anosov flow $\varphi_t$, the leaf spaces $\Lambda^s, \Lambda^u$ of $\F^s, \F^u$ are either both Hausdorff (and hence homeomorphic to $\R$) or both non-Hausdorff. This was shown by Barbot and Fenley (see \cite{barbot_1995}, \cite{fenley1995sided}). There exist examples of both behaviors. In a general bifoliated plane this is not necessarily the case, and it is not assumed in this paper.
\end{rmk}

\section{Proof of Theorem \ref{theorem: main}}\label{section: main}
 
Given a bifoliated plane $P = (P, \F_1, \F_2)$ and a group $G \subseteq \mathrm{Aut}^+(P)$, there is an induced action of $G$ on the leaf spaces $\Lambda_1, \Lambda_2$. The first goal of this section is to show that the $\Lambda_i$ have orderable cataclysms with respect to this action, and then to use this to define a $G$-invariant linear order on the set $\mathrm{Ends}(\Lambda_1) \cup \mathrm{Ends}(\Lambda_2)$. We begin by explicitly defining the order on each cataclysm, and we then show that it is indeed a linear order. 

The first step is to define a partial order on the leaf spaces $\Lambda_i$. By Proposition \ref{proposition : leaf_spaces}, fixing a continuous orientation on leaves of each of $\F_1$ and $\F_2$ induces an orientation on $\Lambda_2$ and $\Lambda_1$, respectively. Orientability of each $\Lambda_i$ allows us to define a partial order on $\Lambda_i$ as follows:

\begin{definition}
    Let $x,y $ in $\Lambda_i$. We say that $y > x$ if there exists an orientation preserving embedding $\gamma\colon [0,1] \to \Lambda_i$ such that $\gamma(0) = x, \gamma(1) = y$.
\end{definition}
It is easy to check that this relation is a partial order on $\Lambda_i$.

Now, note that the complement of any leaf of each $\F_i$ has exactly two connected components. We will use this fact to define the order on cataclysms.

\begin{definition}
    Let $l$ be a leaf of $\F_1$ (resp. $\F_2$). We define the \emph{front of } $l$ to be the connected component of $P\setminus l$ that consists of all the leaves $l'$ of $\F_1$ such that $l' > l$ in $\Lambda_1$.

    Likewise, we define the \emph{back of} $l$ to be the connected component of $P\setminus l$ containing some leaf $l''$ of $\F_1$ such that $l'' < l$.
\end{definition}

\begin{remark}
     The front and the back of $l$ are disjoint, and therefore their union is $P\setminus l$. Thus, given two leaves $l, l'$ of $\F_1$ (resp. $\F_2$), one has that $l'$ is either in the back or in the front of $l$. 
     
     However, note that (perhaps counterintuitively) it is possible to have $l$ and $l'$ be in the front (or the back) of each other. That is, it is not true that the fact that $l'$ is in front (back) of $l$ implies that $l$ is in the back (front) of $l'$. For an example, see Figure \ref{fig: lemma_nonsep_figure}: the leaves $l_1, l_2$ are in the back of each other. However, the next lemma shows that this does not happen when the leaves intersect a pair of non-separated leaves (a condition satisfied by leaves $s_1, s_2$ in Figure \ref{fig: lemma_nonsep_figure}).

\end{remark}

%\begin{definition}
%    Let $l$ be a leaf of $\F_1$ (resp. $\F_2$), and let $s$ be a leaf of $\F_2$ (resp. $\F_1$) that intersects it. 
%    Then, given an orientation preserving parametrization $\alpha\colon \R \to s$ such that $\alpha(0) = s\cap l$, the $\emph{front}$ of $l$ is the connected component of $P\setminus l$ that contains $\alpha(\R^+)$, and we call the other component of $P\setminus l$ the \emph{back} of $l$.
%\end{definition}

\begin{lemma}\label{non_sep}
Let $l_1, l_2$ be non-separated leaves of $\F_1$. Then, there exists leaves $s_1, s_2 $ of $ \F_2$ such that $s_i \cap l_{i} \neq \emptyset$ and such that either $s_1$ is in the back of $s_2$ or $s_2$ is in the back of $s_1$. Moreover, if $s_1$ is in the back of $s_2$ (or vice-versa), then this also holds for any other choice of such leaves $s_1', s_2'$.
\end{lemma}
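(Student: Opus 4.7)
The plan is to exploit a sequence of leaves $(l_n) \subset \F_1$ accumulating on both $l_1$ and $l_2$, and to read off the desired comparison from the natural orientation inherited on such an accumulating leaf.

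For the existence claim, fix any points $p_i \in l_i$ and let $s_i$ be the leaf of $\F_2$ through $p_i$. By the local product structure of the bifoliation, one can choose a sub-arc $\tau_i \subset s_i$ through $p_i$ that serves as a transversal to $\F_1$. The non-separation hypothesis means every neighborhood of $l_1$ in $\Lambda_1$ meets every neighborhood of $l_2$, so for all sufficiently large $n$, $l_n$ meets both $\tau_1$ and $\tau_2$, and hence both $s_1$ and $s_2$. Let $u_i$ be the unique point of $l_n \cap s_i$; single-valuedness follows from the standard fact that in a bifoliated plane each leaf of $\F_1$ meets each leaf of $\F_2$ in at most one point (a Jordan-separation argument using that both foliations are by properly embedded lines). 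Since $q_2 \colon P \to \Lambda_2$ restricts to an injective orientation-preserving map on any leaf of $\F_1$ (using the induced orientation from Proposition~\ref{proposition : leaf_spaces}), the order of $u_1, u_2$ along $l_n$ matches the order of $s_1, s_2$ in $\Lambda_2$. Thus one of $s_1 < s_2$ or $s_2 < s_1$ holds in $\Lambda_2$, placing one of $s_1, s_2$ in the back of the other.

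For the ``moreover'' clause, suppose $s_1$ is in the back of $s_2$ and let $s_1', s_2' \in \F_2$ be any other leaves with $s_i' \cap l_i \neq \emptyset$. A key input is the observation that no leaf of $\F_2$ can meet two non-separated leaves of $\F_1$: if $s \in \F_2$ met both $l_1$ and $l_2$, then projecting via $q_1$ the sub-arc of $s$ between the intersection points would produce an embedded arc in $\Lambda_1$ joining $l_1$ and $l_2$, which is impossible since an embedded arc cannot join two non-separated points in a non-Hausdorff 1-manifold (as in the line-with-two-origins obstruction). Now choose transversals $\tau_i' \subset s_i'$ through $s_i' \cap l_i$ and pass deep enough in the accumulating sequence so that a single $l_n$ meets all four transversals $\tau_1, \tau_1', \tau_2, \tau_2'$. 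Because $l_1$ and $l_2$ are disjoint in $P$ and $l_n$ is properly embedded, the portion of $l_n$ lying in a small neighborhood of $l_1$ is disjoint from its portion in a small neighborhood of $l_2$; these two portions appear in a well-defined order along the oriented leaf $l_n$ that does not depend on $n$ (for $n$ large), since the orientation of $l_n$ is inherited continuously from the fixed orientation of $\F_1$. The intersections $l_n \cap s_1$ and $l_n \cap s_1'$ lie in the $l_1$-portion, while $l_n \cap s_2$ and $l_n \cap s_2'$ lie in the $l_2$-portion, so all four are ordered along $l_n$ in the same sense as the unprimed pair. Projecting via $q_2|_{l_n}$ to $\Lambda_2$ yields $s_1' < s_2'$, so $s_1'$ is in the back of $s_2'$.

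The main obstacle is making rigorous the geometric picture that the accumulating leaves $l_n$ split consistently into an $l_1$-portion and an $l_2$-portion whose order along the oriented $l_n$ does not depend on $n$. This requires unpacking what ``accumulation'' in $\Lambda_1$ means in terms of points in $P$, together with properness of $\F_1$ to exclude pathological oscillations of $l_n$ between neighborhoods of $l_1$ and of $l_2$. A secondary subtlety is the no-common-$\F_2$-leaf claim for non-separated $\F_1$-leaves, which relies on the topological fact that an embedded arc cannot join two non-separated points in a non-Hausdorff 1-manifold.
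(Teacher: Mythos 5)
Your proof is correct, and it reaches the conclusion by a genuinely different mechanism than the paper's. The paper first proves a sub-lemma showing that the orientations of $l_1$ and $l_2$ must be compatible with the boundary orientation of the region $U$ between them (the other orientation would put $l_1$ and $l_2$ in different components of $U\setminus l_N'$, so the sequence could not accumulate on both), and then determines which of $s_1,s_2$ is in the back of the other directly from the local product orientation at the points $s_i\cap l_i$; independence of the choice of $s_1,s_2$ is handled by a ``continuously varying'' argument. You instead use an accumulating leaf $l_n$ as an oriented transversal to $\F_2$ meeting both $s_1$ and $s_2$, so that $q_2(l_n)$ exhibits $s_1$ and $s_2$ as comparable in $\Lambda_2$, and comparability forces exactly one of them to be in the back of the other. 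This buys two things: it replaces the pictorial orientation analysis by the partial order on $\Lambda_2$ (though you should note explicitly that for \emph{comparable} leaves ``$s_1$ is in the back of $s_2$'' is equivalent to ``$s_1<s_2$'', which is what lets you transfer the hypothesis of the moreover clause to the order of intersection points along $l_n$), and it gives a more robust proof of the ``moreover'' clause than the paper's continuity argument, since all four intersection points are compared along a single $l_n$.

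The step you flag as the main obstacle is a real gap as written, but it closes with exactly the ingredient the paper's own sub-lemma uses: take disjoint product rectangles $R_1$ and $R_2$ around the compact segments of $l_1$ and $l_2$ joining $s_1\cap l_1$ to $s_1'\cap l_1$ and $s_2\cap l_2$ to $s_2'\cap l_2$. Because in a bifoliated plane a leaf meets a transversal at most once (a Jordan-curve argument in the plane, not properness of $\F_1$, which by itself would not exclude repeated visits), $l_n\cap R_i$ is a single plaque, hence connected; for $n$ large it contains both relevant intersection points, so the two pairs are unlinked along $l_n$ and project consistently to $\Lambda_2$. Two smaller points: your claim that no $\F_2$-leaf meets both $l_1$ and $l_2$ is correct but is also needed in the existence part (to guarantee $s_1\neq s_2$), and it admits a more elementary proof than the broken-path uniqueness you invoke --- such a leaf would be crossed twice by $l_n$ for large $n$, once near each $l_i$, contradicting the single-intersection property.
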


\begin{proof}[Proof of Lemma \ref{non_sep}]

Let $(l_n')_{n\in \mathbb{N}}$ be a sequence of leaves of $\F_1$ that converges to a set of leaves containing $l_1$ and $l_2$, and let $U$ denote the region bounded by $l_1$ and $l_2$. First, we show the following:

\begin{lemma*}
The leaves $l_1$ and $l_2$ are oriented so that their orientation coincides with the boundary orientation induced by an orientation of $U$.
\end{lemma*}

\begin{proof}
Consider trivially foliated (and trivially oriented), disjoint rectangular neighborhoods $R_1$ and $R_2$ intersecting $l_1$ and $l_2$ respectively. Taking $N >0$ large enough, we have that $l_n'$ intersects both $R_1$ and $R_2$ for all $n \geq N$, and taking a subsequence we may assume that the $l_n'$ for $n\geq N$ are in the same connected component of $U \setminus l_N'$. Note that since the foliations are assumed to be nonsingular, any leaf can intersect $R_i$ at most once, and all leaves must have their first intersection with $R_i$ on the same side of $R_i$, for $i=1,2$. The notion of ``first intersection'' of a leaf with $R_i$ is well defined since the foliation is oriented.

Now, suppose that $l_1$ and $l_2$ are not oriented in such a way that their orientation coincides with the boundary orientation corresponding to an orientation of $U$ (see Figure \ref{fig: lemma_nonsep_impossible}, on the left). Then, $l_1$ and $l_2$ are in different connected components of $U \setminus l_N'$. This implies that the leaves $l_n'$ for $n>N$ cannot accumulate on one of $l_1$ and $l_2$, which is a contradiction. This proves the lemma.
\end{proof}

\begin{figure}[h]
  \centering
    \includegraphics[width=0.80\textwidth]{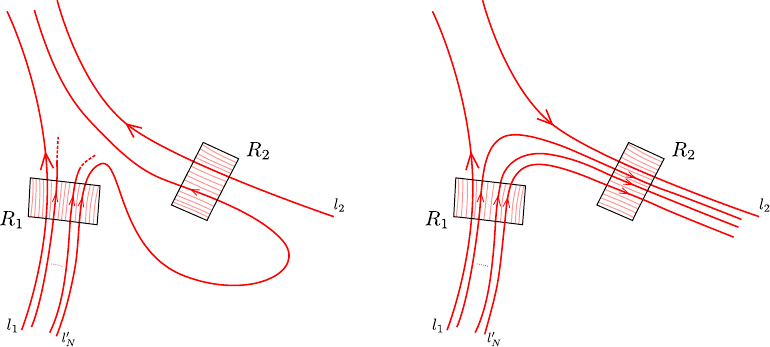}
    \caption{Two ways of orienting $l_1$ and $l_2$. The orientations in the picture on the left are not compatible with $l_1$ and $l_2$ being non-separated.}
    \label{fig: lemma_nonsep_impossible}
\end{figure}

Due to the lemma shown above we may then assume without loss of generality that $l_1, l_2$ are oriented as in Figure \ref{fig: lemma_nonsep_figure}.

\begin{figure}[h]
  \centering
    \includegraphics[width=0.40\textwidth]{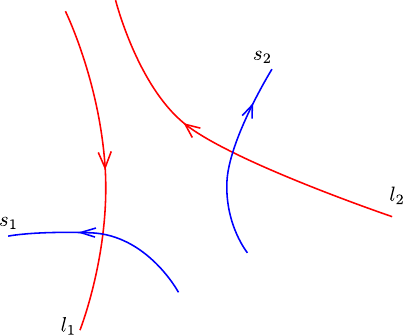}
    \caption{Two non-separated leaves $l_1, l_2$.}
    \label{fig: lemma_nonsep_figure}
\end{figure}

Let $s_1$, $s_2$ be leaves of $\F_2$ such that $s_i \cap l_i \neq \emptyset$. These leaves have orientations coming from the orientation on the foliation $\F_2$. There are two possibilities: either both of the positively oriented rays contained in the $s_i$ and based at $s_i \cap l_i$ are contained in the region between $l_1$ and $l_2$, or the same is true of the analogous negatively oriented rays (no other cases are possible, since the foliations $\F_1, \F_2$ must have the same local product orientation at $s_1 \cap l_1$ and $s_2 \cap l_2$). 

In Figure \ref{fig: lemma_nonsep_figure} we depict the second case. In this case, we can see that $s_2$ is in the back of $s_1$, and $s_1$ is not in the back of $s_2$. The same is true in the other case: recall that it is the orientation of $l_i$ that determines which component of $P \setminus s_1$ is the front of $s_i$. Therefore, reversing the orientations of both of the $s_i$ while leaving those of the $l_i$ unchanged does not change what the front (or back) of each leaf $s_i$ is.

%this is determined by the orientation of $\F_1$ (which is the transverse orientation of $\F_2$).

Moreover, any other choice of $s_1', s_2'$ such that $s_i' \cap l_i \neq \emptyset$ can be obtained by continuously varying $s_1$ and $s_2$, so that $s_i$ is in the back of $s_j$ if and only if $s_i'$ is in the back of $s_j'$, for $i,j \in \{ 1,2\}, i\neq j$.
\end{proof}

\begin{definition}\label{order}
    Given two non-separated leaves $l_1, l_2$ of $\F_1$, let $s_1, s_2$ be leaves of $\F_2$ intersecting them. Then, we say that $l_1 < l_2$ (\emph{resp.} $l_1 > l_2$) if $s_1$ is in the back (\emph{resp.} front) of $s_2$.
\end{definition}

%The definition above allows us to order any collection of pairwise non-separated leaves, which in turn allows us to order corresponding cataclysms in the leaf spaces $\Lambda_i, i= 1,2$. Moreover, orders defined in this way are preserved under any group action that preserves the orientations of the foliations.

We now show that this gives a total order on each cataclysm, and these orders are preserved by the action of $\mathrm{Aut}^+(P)$.

%\begin{definition} already defined earlier
%    Let $(P, \F_1, \F_2)$ be a bifoliated plane, where $\F_1$ and $\F_2$ are oriented. We define $\mathrm{Aut}^+(P, \F_1, \F_2)$ to be the group of homeomorphisms of $P$ that preserves the foliations $\F_1$ and $\F_2$, together with their orientations.

%    For brevity, we write $\mathrm{Aut}^+(P)$ instead of $\mathrm{Aut}^+(P,\F_1, \F_2)$ whenever the specific foliations $\F_1, \F_2$ are implied within the given context.  
%\end{definition}

\begin{lemma}\label{orderable_cataclysms}
    Each leaf space $\Lambda_i, i=1,2$ has orderable cataclysms with respect to the action of $\mathrm{Aut}^+(P)$. In particular, this is also true for any subgroup $G \subset \mathrm{Aut}^+(P)$.
\end{lemma}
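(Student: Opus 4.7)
The plan is to check three things about the binary relation on each cataclysm given by Definition~\ref{order}: that it is well-defined and trichotomous, that it is transitive (so it defines a linear order), and that it is preserved by the action of $\mathrm{Aut}^+(P)$. The first of these is immediate from Lemma~\ref{non_sep}: the choice of $s_1, s_2$ is irrelevant, and exactly one of the two mutually exclusive alternatives (either $s_1$ in the back of $s_2$, or $s_2$ in the back of $s_1$) holds. Invariance under $\mathrm{Aut}^+(P)$ is also straightforward: any $g \in \mathrm{Aut}^+(P)$ preserves $\F_1, \F_2$ and their orientations, so by Proposition~\ref{proposition : leaf_spaces} it induces orientation-preserving homeomorphisms of $\Lambda_1, \Lambda_2$ that preserve their partial orders, and hence preserve the back/front decomposition of every leaf.

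The substantive part is transitivity. My strategy is to translate the cataclysm order on $\Lambda_1$ into the partial order on the transverse leaf space $\Lambda_2$. The key observation is that whenever $l_i, l_j$ are non-separated leaves of $\F_1$ and $s_i, s_j \in \F_2$ intersect them, the classes $[s_i], [s_j] \in \Lambda_2$ are in fact comparable in the partial order on $\Lambda_2$. Indeed, an accumulating sequence $l_n \to \{l_i, l_j\}$ of $\F_1$-leaves eventually crosses both $s_i$ and $s_j$ by local triviality of $\F_1$ near $s_i \cap l_i$ and $s_j \cap l_j$, and a single leaf of $\F_1$ crosses each leaf of $\F_2$ at most once (otherwise two transverse leaves in a simply connected surface would bound a bigon). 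Thus $q_2|_{l_n} : l_n \to \Lambda_2$ is an embedded orientation-preserving path passing through both $[s_i]$ and $[s_j]$, witnessing their comparability. Moreover, once comparable, ``$s_i$ in the back of $s_j$'' is equivalent to $[s_i] < [s_j]$: the back component of $P \setminus s_j$ projects to the component of $\Lambda_2 \setminus \{[s_j]\}$ containing the classes strictly less than $[s_j]$.

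Applying this translation to $l_1 < l_2 < l_3$ in a cataclysm yields $[s_1] < [s_2] < [s_3]$ in the partial order on $\Lambda_2$. The partial order on a simply connected oriented 1-manifold is transitive: a genuine self-intersection in the concatenation of two embedded orientation-preserving paths $\gamma_1$ (from $x$ to $y$) and $\gamma_2$ (from $y$ to $z$) would give a point $p \ne y$ with embedded orientation-preserving paths $p \to y$ (along $\gamma_1$) and $y \to p$ (along $\gamma_2$), whose concatenation is a non-constant orientation-preserving loop, which cannot exist in a simply connected oriented 1-manifold. Hence $[s_1] < [s_3]$, and since $l_1, l_3$ are non-separated in $\F_1$, the translation reverses to give $l_1 < l_3$.

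The main obstacle I expect is not any single step but rather the careful bookkeeping in the translation step, matching the back/front decomposition of $P \setminus s_j$ in the plane with the two components of $\Lambda_2 \setminus \{[s_j]\}$ and with the direction of the orientation-preserving embedded path $q_2|_{l_n}$. A minor technicality is ruling out the degenerate possibility $s_i = s_j$, which is easy since each $l_i$ meets infinitely many leaves of $\F_2$.
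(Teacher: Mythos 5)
Your proof is correct, and it rests on the same geometric mechanism as the paper's: a leaf of $\F_1$ accumulating on the cataclysm, projected to $\Lambda_2$, is the transversal that orders the leaves $s_i$. But you organize the argument genuinely differently. The paper takes a single sequence $(t_n)$ of $\F_1$-leaves converging simultaneously to $l_1, l_2, l_3$, picks one leaf $t_N$ crossing all four of $s_1, s_2, s_2', s_3$, and reads the conclusion off the order of the intersection points along $t_N$. You instead convert ``$s_i$ is in the back of $s_j$'' into ``$[s_i]<[s_j]$'' in the partial order on $\Lambda_2$, using only \emph{pairwise} accumulating sequences to witness comparability, and then invoke transitivity of that partial order --- a fact the paper itself declares easy to check immediately after defining it. This buys you something concrete: you never need a single sequence of leaves accumulating on all three of $l_1, l_2, l_3$ at once, an existence claim the paper uses without justification (pairwise non-separation does not obviously yield a common accumulating sequence). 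Your care in requiring comparability before upgrading ``in the back of'' to ``$<$'' is also exactly right, given the paper's remark that two leaves can each lie in the back of the other. Two small points to tighten: when you write $[s_1]<[s_2]<[s_3]$ you are tacitly replacing the witness $s_2'$ for $l_2<l_3$ by $s_2$, which is legitimate by the independence-of-choice clause of Lemma \ref{non_sep} but should be said explicitly; and the claim that the back of $s_j$ projects onto the component of $\Lambda_2\setminus\{[s_j]\}$ containing every class below $[s_j]$ deserves a sentence (all such classes reach $[s_j]$ from the same side of a local transversal, hence lie in one component).
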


\begin{proof}
   Let $\mu$ be a cataclysm in $\Lambda_1$. Then $\mu$ is a collection of non-separated $\F_1$-leaves. For any two leaves $l_1, l_2 \in \mu$, the relation described in Definition \ref{order} is well defined by Lemma \ref{non_sep} and is preserved by the action of $\mathrm{Aut}^+(P)$. We need to check that this relation is transitive in order to show that it defines a linear order on $\mu$. Suppose $l_1 < l_2 $ and $l_2 < l_3$, and we want to show that $l_1 < l_3$. 
\begin{figure}[h!]
  \centering
    \includegraphics[width=0.99\textwidth]{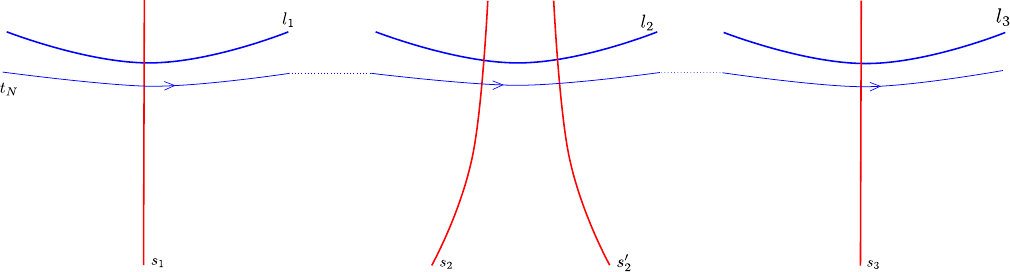}
    \caption{Transitivity of the order on a cataclysm}
    \label{lemma_cataclysms}
\end{figure}
   Since $l_1 < l_2$, there exist $s_1, s_2$ leaves of $\F_2$ intersecting $l_1, l_2$ respectively, such that $s_1$ is in the back of $s_2$. Analogously, there exist $s_2', s_3$ leaves of $\F_2$ intersecting $l_2, l_3$ respectively, such that $s_2'$ is in the back of $s_3$. 
   Let $(t_n)_{n\geq 1}$ be a sequence of $\F_1$-leaves that converges to (possibly among other leaves) $l_1, l_2, l_3$ in $\Lambda_1$. In the plane $P$, for $N$ large enough we must then have that $t_N$ intersects $s_1, s_2, s_2', s_3$. Additionally, the leaves $l_1, l_2, l_3$ must be on the same side of the leaf $t_N$, since otherwise there could not exist a sequence converging to all of them. Therefore, the situation is as shown in Figure \ref{lemma_cataclysms}.
      
   The fact that $s_1, s_2, s_2'$ and $s_3$ intersect $t_N$ together with the fact that $s_1$ is in the back of $s_2$ and $s_2'$ is in the back of $s_3$ implies that (regardless of whether $s_2$ is in the back of $s_2'$ or viceversa) $s_1$ is in the back of $s_3$, meaning that $l_1 < l_3$. This shows that the relation $<$ is transitive, and therefore defines a linear order on $\mu$.\end{proof}

Finally, we can define the desired $\mathrm{Aut}^+(P)$-invariant order:

\begin{proposition}\label{proposition: order_on_ends}
    Let $(P, \F_1, \F_2)$ be a bifoliated plane. Then, there exists an $\mathrm{Aut}^+(P)$-invariant linear order on $\mathrm{Ends}(\Lambda_1) \cup \mathrm{Ends}(\Lambda_2)$.

\end{proposition}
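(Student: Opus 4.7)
The plan is to use Lemma \ref{orderable_cataclysms} together with Proposition \ref{order_zhao} four times---once for each of the subsets $\Ends_+(\Lambda_1)$, $\Ends_-(\Lambda_1)$, $\Ends_+(\Lambda_2)$, $\Ends_-(\Lambda_2)$---and then to concatenate the resulting four invariant orders into a single linear order on the union.

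For the positive halves $\Ends_+(\Lambda_i)$, Lemma \ref{orderable_cataclysms} gives orderable cataclysms directly, so Proposition \ref{order_zhao} supplies an $\mathrm{Aut}^+(P)$-invariant linear order $<_i^+$ on $\Ends_+(\Lambda_i)$ for $i=1,2$. For the negative halves, I reapply the same machinery after reversing the orientation of $\F_{3-i}$, which reverses the induced orientation of $\Lambda_i$ and thereby swaps positive and negative ends of $\Lambda_i$, while leaving the orientation of $\Lambda_{3-i}$ untouched. Two observations make this harmless. First, a homeomorphism preserves an oriented foliation if and only if it preserves that foliation with the reversed orientation, so $\mathrm{Aut}^+(P)$ is unchanged when one reverses the orientation of $\F_{3-i}$. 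Second, the cataclysm order from Definition \ref{order} on $\Lambda_i$ uses only the back/front relation among leaves of $\F_{3-i}$ in $P$, which in turn is determined by the orientation of $\Lambda_{3-i}$, i.e.\ by the orientation of $\F_i$; so it is unaffected by reversing $\F_{3-i}$. Lemma \ref{orderable_cataclysms} therefore still applies to the modified bifoliation, and a second application of Proposition \ref{order_zhao} produces an $\mathrm{Aut}^+(P)$-invariant order $<_i^-$ on $\Ends_-(\Lambda_i)$.

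Because every element of $\mathrm{Aut}^+(P)$ preserves the orientations of both $\Lambda_1$ and $\Lambda_2$, each of the four subsets $\Ends_{\pm}(\Lambda_i)$ is $\mathrm{Aut}^+(P)$-invariant setwise. I therefore define the global order by concatenating the four invariant orders, for instance
\[
\Ends_+(\Lambda_1) \;<\; \Ends_-(\Lambda_1) \;<\; \Ends_+(\Lambda_2) \;<\; \Ends_-(\Lambda_2),
\]
with each block carrying its internal order $<_i^{\pm}$. This is automatically a linear order, and it is $\mathrm{Aut}^+(P)$-invariant because each block is preserved setwise and the order within each block is invariant.

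I expect the only subtle point to be the orientation-reversal argument for the negative ends, which requires careful bookkeeping of which orientations enter which part of the construction of Zhao's order on positive ends and of the cataclysm order of Definition \ref{order}. Once that bookkeeping is settled, the rest of the argument is a direct appeal to already-established results.
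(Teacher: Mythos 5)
Your proposal is correct and follows essentially the same route as the paper: apply Proposition \ref{order_zhao} (via Lemma \ref{orderable_cataclysms}) to each of the four sets $\Ends_{\pm}(\Lambda_i)$, using an orientation reversal to handle the negative ends, and then concatenate the four invariant orders block by block. Your bookkeeping of which orientation must be reversed (that of $\F_{3-i}$, which controls the orientation of $\Lambda_i$ but not the cataclysm order of Definition \ref{order}) is a more explicit version of the paper's brief remark that one uses Proposition \ref{order_zhao} ``again but with orientations reversed.''
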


\begin{proof}

First, let $<_1^+$ be an $\mathrm{Aut}^+(P)-$invariant linear order on $\mathrm{Ends}_+(\Lambda_1)$, provided by Proposition \ref{order_zhao} and Lemma \ref{orderable_cataclysms}. Using Proposition \ref{order_zhao} again but with orientations reversed yields an $\mathrm{Aut}^+(P)$-invariant linear order $<_1^-$ on $\mathrm{Ends}_-(\Lambda_1)$. 

Since $\mathrm{Ends}(\Lambda_1) = \mathrm{Ends}_+(\Lambda_1) \sqcup \mathrm{Ends}_-(\Lambda_1)$ and these sets are $\mathrm{Aut}^+(P)$-invariant (the action of $ \mathrm{Aut}^+(P)$ preserves the orientation of both leaf spaces), we can define an order $<_1$ on $\mathrm{Ends}(\Lambda_1)$ by declaring $x <_1 y$ for all $x$ in $\mathrm{Ends}_+(\Lambda_1)$ and all $y $ in $\mathrm{Ends}_-(\Lambda_1)$. When $x,y$ are both in $\mathrm{Ends}_+(\Lambda_1)$ (resp. $\mathrm{Ends}_-(\Lambda_1)$), we can let $x<_1 y$ if and only if $x <_1^+ y$ (resp. $x <_1^- y$).

This is by definition an $\mathrm{Aut}^+(P)$-invariant linear order on $\mathrm{Ends}(\Lambda_1)$. The same procedure shows the existence of an analogous invariant linear order $<_2$ on $\mathrm{Ends}(\Lambda_2)$. Finally, the $\mathrm{Aut}^+(P)$-invariant linear orders on $\mathrm{Ends}(\Lambda_1)$ and $\mathrm{Ends}(\Lambda_2)$ can be combined to yield an $\mathrm{Aut}^+(P)$-invariant order $<$ on $\mathrm{Ends}(\Lambda_1) \cup \mathrm{Ends}(\Lambda_2)$, as we wanted: let  $x < y $ for all $x \in \mathrm{Ends}(\Lambda_1), y \in \mathrm{Ends}(\Lambda_2)$. For $x, y \in \mathrm{Ends}(\Lambda_i)$ and $i = 1,2$, let $x < y$ if and only if $x <_i y$.
\end{proof}

We now use this order to prove our main theorem.

\begin{proof}[Proof of Theorem \ref{theorem: main}]

We will use an idea of Zhao in section 3.3 of \cite{zhao}, with some modifications. The main step of this idea is to show that the subgroup of $\mathrm{Aut}^+(P)$ acting trivially on the ends of both leaf spaces is left-orderable.

Let $P = (P, \F_1, \F_2)$ be a bifoliated plane, and let $G = \mathrm{Aut}^+(P)$.
%Given a bifoliated plane $(P, \F_1, \F_2)$, by Lemma \ref{orderable_cataclysms} we know that every cataclysm can be assigned a linear order preserved by the action of $\mathrm{Aut}^+(P)$. Therefore, we are in a position to apply Proposition \ref{proposition: order_on_ends}.

By Proposition \ref{proposition: order_on_ends}, there exists a $G$-invariant linear order on $\mathrm{Ends}(\Lambda_1) \cup \mathrm{Ends}(\Lambda_2) $. Let
\begin{align*}
    H&= \{ g\in G : g\cdot x = x \text{ for all } x\in \mathrm{Ends}(\Lambda_1) \cup \mathrm{Ends}(\Lambda_2) \}.
\end{align*}
Then $G/H$ by definition acts faithfully on $\mathrm{Ends}(\Lambda_1) \cup \mathrm{Ends}(\Lambda_2)$ and preserves the linear order on it. This gives a left-order on $G/H$: by fixing a basepoint $x_0 \in \mathrm{Ends}(\Lambda_1) \cup \mathrm{Ends}(\Lambda_2)$, one can define $gH <_{G/H} g'H$ if and only if $g\cdot x_0 < g'\cdot x_0$ (for a complete proof, see Example III of Section 5 in \cite{conrad}).

Having shown the existence of a left-order on $G/H$, the following classical result reduces the proof of Theorem \ref{theorem: main} to showing the existence of a left-order on $H$.

\begin{lemma*}[\cite{conrad}, Section 3.7]
    If $<_{G/H}$ is a left-order on $G/H$ and $<_H$ is a left-order on $H$, then a left-order can be defined on $G$ as follows: given $g,g' $ in $G$, we say $g'<g$ if  one of two conditions holds: either $g'H <_{G/H} gH$, or $g'H = gH$ and $g^{-1}g' <_H e$.
\end{lemma*}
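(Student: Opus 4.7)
The plan is to verify that the two-clause rule defines a strict total order on $G$ that is invariant under left multiplication. First I would check well-definedness: the first clause depends only on the cosets $g'H$ and $gH$, and the second clause is invoked only when $gH = g'H$, i.e., when $g^{-1}g' \in H$, so $g^{-1}g' <_H e$ is a meaningful statement. Trichotomy then splits into two cases. If $gH \neq g'H$, totality of $<_{G/H}$ gives exactly one of $gH <_{G/H} g'H$ and $g'H <_{G/H} gH$, and the equal-coset clause does not apply. If $gH = g'H$, then $g^{-1}g' \in H$, and trichotomy of $<_H$ yields exactly one of $g^{-1}g' <_H e$ (giving $g' < g$), $g^{-1}g' = e$ (giving $g = g'$), or $e <_H g^{-1}g'$; the last is equivalent to $(g')^{-1}g <_H e$ by left-multiplying by $(g')^{-1}g \in H$ and using left-invariance of $<_H$, which yields $g < g'$.

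For transitivity, suppose $g_1 < g_2$ and $g_2 < g_3$. I would split into cases based on which clause witnesses each inequality. If either of the two steps is witnessed by the coset clause, the equal-coset steps leave the corresponding coset unchanged, so in all such mixed cases we end up with $g_1 H <_{G/H} g_3 H$ by transitivity of $<_{G/H}$, and $g_1 < g_3$ via the coset clause. If both steps come from the equal-coset clause, then $g_1 H = g_2 H = g_3 H$, and the elements $a = g_1^{-1} g_2$ and $b = g_2^{-1} g_3$ lie in $H$ with $a <_H e$ and $b <_H e$. Left-multiplying $b <_H e$ by $a$ gives $ab <_H a$, and combining with $a <_H e$ by transitivity of $<_H$ gives $g_1^{-1} g_3 = ab <_H e$, so $g_1 < g_3$ via the equal-coset clause.

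Finally, for left-invariance, fix $h \in G$ and suppose $g' < g$. If this is witnessed by $g'H <_{G/H} gH$, then left-invariance of $<_{G/H}$ under the $G$-action on cosets gives $(hg')H <_{G/H} (hg)H$, hence $hg' < hg$. If instead $gH = g'H$ and $g^{-1}g' <_H e$, then $(hg)H = (hg')H$ and $(hg)^{-1}(hg') = g^{-1}h^{-1}hg' = g^{-1}g' <_H e$, again giving $hg' < hg$. I do not expect a real obstacle; the only mildly subtle step is the transitivity case in which both inequalities come from the equal-coset clause, which amounts to the standard fact that the negative cone of a left-ordered group is closed under multiplication, a direct consequence of left-invariance and transitivity of $<_H$.
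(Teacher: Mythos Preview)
The paper does not actually prove this lemma; it is quoted as a classical fact with a citation to Conrad and used as a black box inside the proof of Theorem~\ref{theorem: main}. Your write-up is the standard verification and the overall strategy (trichotomy, transitivity, left-invariance, each split into the coset-clause and equal-coset-clause cases) is correct.

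There is one small indexing slip in the equal-coset transitivity case. From the definition, $g_1 < g_2$ gives $g_2^{-1}g_1 <_H e$ (with $g' = g_1$, $g = g_2$), not $g_1^{-1}g_2 <_H e$; likewise $g_2 < g_3$ gives $g_3^{-1}g_2 <_H e$. So your $a = g_1^{-1}g_2$ and $b = g_2^{-1}g_3$ actually satisfy $e <_H a$ and $e <_H b$, and the conclusion you need is $g_3^{-1}g_1 <_H e$, not $g_1^{-1}g_3 <_H e$. The fix is immediate: set $c = g_2^{-1}g_1$ and $d = g_3^{-1}g_2$, both $<_H e$; then $g_3^{-1}g_1 = dc$, and left-multiplying $c <_H e$ by $d$ gives $dc <_H d <_H e$, exactly as you intended. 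With this correction the proof is complete.
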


%, defined roughly in the same way as the lexicographic order on a product of ordered sets: we say that $ g' < g $ if . 
Now, we show that there indeed exists a  left-order on $H$.

By second-countability of the leaf spaces, there exists a countable dense subset $X = \{x_i\}_{i \in \mathcal{I}} \subset \Lambda_1$. For each $i \in \mathcal{I}$, let $r^+_i$ and $r^-_i$ be a positive ray and a negative ray based at $x_i$, respectively. Let $R_i = r^+_i \cup r^-_i$. Then $R_i$ is homeomorphic to $\R$ for all $i \in \mathcal{I}$, so the collection $\{R_i\}_{i \in \mathcal{I}}$ is a countable covering of $\Lambda_1$ by homeomorphic copies of $\R$, and each $R_i$ contains rays representing a positive end of $\Lambda_1$ and a negative end of $\Lambda_1$. Note that a priori, we could have $R_{i_1} = R_{i_2}$ for some $i_1 \neq i_2$, but we may take a subcover with only distinct copies of $\R$. With a slight abuse of notation, we will also use $\{R_i\}_{i \in \mathcal{I}}$ to denote this refined covering by distinct copies of $\R$. Similarly, we can perform an analogous covering by a countable collection of distinct copies of $\R$, $\left\{R_j\right\}_{j \in \mathcal{J}}$, for $\Lambda_2$. \par

%\item To get a left-order on $H$: cover $\Lambda_1$ with countably many copies of $\R$, where each copy $R_i$ has one end in $\Ends^+$ and one in $\Ends^-$. Since $H$ fixes ends, get action on each $R_i$ and a homomorphism $\varphi: H \to \prod_{i} \mathrm{Homeo}^+(R_i) \cong \prod_i \mathrm{Homeo}^+(\R)$. 
%Since the product is left-orderable, it's enough to show that $\varphi$ is injective. 

%an element of $\mathrm{Ends}_+(\Lambda_1)$ and another ray, also contained in $R_i$, corresponding to an element of $\mathrm{Ends}_-(\Lambda_1)$.

Now, let $\mathcal{K} = \mathcal{I} \sqcup \mathcal{J}$. For each $k \in \mathcal{K}$, both the positive end and the negative end represented by rays contained in $R_k$ are fixed by the action of the group $H$. With the choice of our subcovers, simple connectedness of $\Lambda_1, \Lambda_2$ implies that given any pair consisting of a positive end $x_1$ and a negative end $x_2$ of each $\Lambda_i$, there exists at most one $k_0$ for which $R_{k_0}$ contains rays representing $x_1$ and $x_2$. This means that each $R_k$ must be preserved by the action of $H$. Then for each $k \in \mathcal{K}$, we have a homomorphism $\varphi_k\colon H \to \mathrm{Homeo}_+(R_k)$ associated to the induced action of $H$ on $R_k$. Together, these homomorphisms give rise to a homomorphism $\varphi \colon H \to \prod_{k \in \mathcal{K}} \mathrm{Homeo}_+(R_k)$ by defining the $k$-th factor of $\varphi(h)$ to be $\varphi_k(h)$. The group $\prod_{k \in \mathcal{K}} \mathrm{Homeo}_+(R_k)  \cong  \prod_{k \in \mathcal{K}} \mathrm{Homeo}_+(\R)$ is left-orderable since it is a countable product of left-orderable groups (\cite{deroin2014groups}). Thus, to show the left-orderability of $H$ it is enough to show that $\varphi$ is injective.

%\item To show that $\varphi$ is injective: if $g \in \mathrm{Ker}(\varphi)$, then $g$ preserves each leaf in the bifoliated plane. This implies it fixes each end of a leaf in $\F_1$ in the boundary circle $\partial P$. If no infinite product regions, then $\mathrm{Ends}(\F_1)$ is dense in $\partial P$, so $g$ acts as the identity in the boundary circle. But then it must fix every point in $P$, but we were assuming the action of $G$ on $P$ is faithful, contradiction. So $\varphi$ is injective and then $H$ is orderable.
Let $h \in \mathrm{Ker}(\varphi)$. Then by definition, $h$ acts as the identity on $R_k$ for all $k$. Since $\{R_k\}_{k \in \mathcal{K}}$ covers $\Lambda_1 \cup \Lambda_2$, every point in each $\Lambda_i$ is fixed by $h$. This means that the leaves of the foliations $\F_1, \F_2$ must be preserved by the action of $h$ on $P$. Given any point $x \in P$, since both $\F_1(x)$ and $\F_2(x)$ are preserved by $h$, we must have their unique intersection fixed by $h$ as well, so $h(x) = x$ and we conclude that $h$ is the identity map on $P$. 

%Then $h$ acts as the identity on $P$, which means that $h$ must be the identity element of $H$ since the action of $G $ on $P$ was assumed to be faithful \todo{do we just say $h$ must be the identity since $G = \mathrm{Aut}^+(P)$?}. 

Therefore, $\mathrm{Ker}(\varphi) = \left\{\mathrm{id} \right\}$, so $\varphi$ is injective, hence $H$ is left-orderable. From the argument above it then follows that $G$ must be left-orderable.
\end{proof}

\section{Connection to Circle at Infinity}\label{section: circle at infinity}

Given a bifoliated plane $P = (P, \F_1, \F_2)$, the circle at infinity $\partial P$ compactifies $P$ to a closed disk $P\cup \partial P$, where the set of endpoints of leaves of the foliations $\F_1, \F_2$ form a dense subset of the \emph{circle at infinity} $\partial P$. 

%
%The circle at infinity associated to a bifoliated plane was defined initially by Fenley (\cite{fenley2012ideal}) for the case of the bifoliated plane $(P_\varphi, \F^+, \F^-)$ associated to an Anosov flow. Later, Bonatti in \cite{bonatti} gave a definition of the circle at infinity for a general bifoliated plane (in fact, for a plane with a countable number of transverse foliations or potentially singular foliations whose singular points are $k$-prongs for $k > 2$, but we will not need this here). 
%

In Section \ref{subsection: circle at infinity} we state the definition and the main properties of the circle at infinity for a bifoliated plane, following Bonatti's work in \cite{bonatti}. To streamline exposition, we use the notation due to Mather in \cite{mather}, where ideal boundaries are defined for certain foliations of surfaces via an essentially identical procedure. For details and proofs of the results we will state, see \cite{bonatti}. Theorem \ref{theorem: faithful} will be proved in Section \ref{subsection: realization of ends}.

\subsection{Boundary circle at infinity of a bifoliated plane}\label{subsection: circle at infinity}

The main idea in the construction of the circle at infinity  associated to a bifoliated plane $(P, \F_1, \F_2)$ is the following: given a foliation by lines $\mathcal{F}$ of the plane, let $\mathrm{Ends}(\mathcal{F})$ denote the set of all ends of leaves of $\mathcal{F}$, that is
\[
\mathrm{Ends}(\mathcal{F}) = \bigcup_{l \text{ leaf of } \mathcal{F}} \mathrm{Ends}(l).
\]

One can then give a canonical \emph{circular order} $\mathcal{O}$ to the set $\mathrm{Ends}(\F_1)\cup \mathrm{Ends}(\F_2)$ when leaves are properly embedded lines in $P$ and the foliations are transverse to each other. This is possible since in this case any two leaves (belonging to the same or different foliations) can intersect at most once, and they must leave any compact subset of $P$. Given any three ends $\{x_1, x_2, x_3\}$ of leaves of $\mathcal{F}_1$ or $\mathcal{F}_2$, one can find a simple closed curve that intersects the rays corresponding to these ends exactly once. Giving this curve the boundary orientation corresponding to the compact region it bounds allows one to declare the triple $(x_1, x_2, x_3)$ as either positively or negatively oriented, and this can be shown to be independent of the choice of simple closed curve.

\begin{rmk}
    Given a set $S$ equipped with a circular order $\mathcal{O}$, there are two operations one can perform on $(S, \mathcal{O})$ which yield circularly ordered sets related to $(S, \mathcal{O})$.
    \begin{enumerate}
        \item \emph{Identification of equivalent points}: we say two points $x, y\in S$ are equivalent if there are only finitely many points of $S$ between them on either side. The circularly ordered set $(\hat{S}, \hat{\mathcal{O}})$ is obtained by identifying all equivalent points. 

        \item \emph{Completion}: analogously to the Dedekind completion of a linearly ordered set, a circularly ordered set $(S, \mathcal{O})$ can be completed in a natural way in order to yield a complete circularly ordered set $(\Tilde{S}, \tilde{\mathcal{O}})$ that contains $(S,\mathcal{O})$. More precisely, Calegari showed in \cite{calegari_groups_order} that a circular order on a set $S$ consists of a collection of linear orders on all subsets of the form $S\setminus \{ x\}$, subject to some compatibility conditions. It can be seen that the usual Dedekind completions of these linear orders also satisfy the compatibility conditions, and yield a complete circular order on $S$.
    \end{enumerate}
\end{rmk}

%---------------------------- OLD SETUP FOR FIGURES
%\begin{figure}[h!]
%  \centering
%  \begin{minipage}[b]{0.45\textwidth}
%    \includegraphics[width=\textwidth]{reeb_plane.pdf}
 %   \caption{Part of a bifoliated plane consisting of two transverse Reeb foliations.}
 %   \label{fig:image8}
 % \end{minipage}
 % \hspace{2cm}
 % \begin{minipage}[b]{0.30\textwidth}
 %   \includegraphics[width=\textwidth]{reeb_circle.pdf}
 %   \caption{The compactification of the region shown on the left.}
 %   \label{fig:image9}
 % \end{minipage}
%\end{figure}
%----------------------------------
\begin{figure}[h!]
  \centering
\begin{minipage}[b]{0.9\textwidth}
  \includegraphics[width=\textwidth]{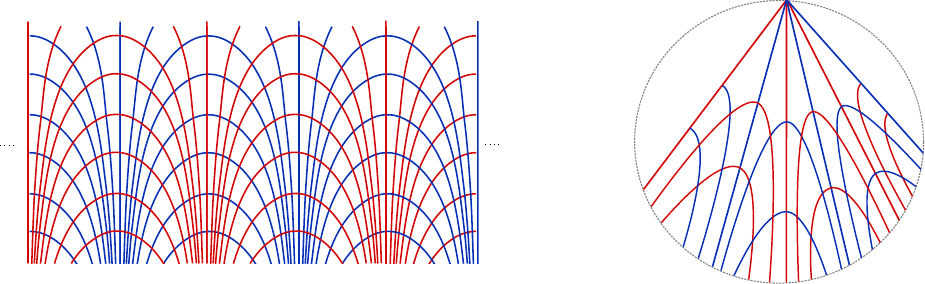}
    \caption{Part of a bifoliated plane consisting of two transverse Reeb foliations (left), and its compactification (right).}
    \label{fig:image8}
  \end{minipage}
 \end{figure}

This allows one to define the circle at infinity associated to a bifoliated plane:

\begin{definition}
    Let $(P, \F_1, \F_2)$ be a bifoliated plane, let $E =  \mathrm{Ends}(\F_1)\cup \mathrm{Ends}(\F_2) $ be the set of ends of leaves of $\F_1$ and $\F_2$, and let $\mathcal{O}$ be the natural circular order on $E$. Then, the \emph{circle at infinity} associated to $(P, \F_1, \F_2) $ is the circularly ordered set $\partial P =  \Tilde{\hat{E }  } $ with order $\Tilde{ \hat{ \mathcal{O} } }$. 
\end{definition}

Elements of $\partial P$ are called \emph{ideal points}. By construction, each end of a leaf corresponds to an ideal point.

\begin{theorem}[\cite{bonatti}]
      The set $P \cup \partial P$ admits a natural topology that makes it homeomorphic to a closed disk which is the compactification of the plane $P$ by the circle $\partial P \cong S^1$. This topology is such that any homeomorphism of the plane preserving the foliations $\F_1, \F_2$ induces a homeomorphism of $P\cup \partial P$. 
      
\end{theorem}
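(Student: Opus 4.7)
The plan is to decompose the theorem into three steps: (i) show that $\partial P$ with the circular order topology is homeomorphic to $S^1$; (ii) define a natural topology on $P\cup\partial P$ and show it is a closed $2$-disk; (iii) verify that every foliation-preserving homeomorphism of $P$ extends continuously to $\partial P$. For step (i), I would first observe that $E=\mathrm{Ends}(\F_1)\cup\mathrm{Ends}(\F_2)$ is countable because $P$ is second countable and hence each $\F_i$ has only countably many leaves. The identification of equivalent points collapses ``adjacent'' clusters of ends so that the quotient $\hat{E}$ is densely circularly ordered: between any two inequivalent ends, transversality of $\F_1$ and $\F_2$ produces a third leaf whose end separates them. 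The Dedekind-type completion $\partial P = \widetilde{\hat{E}}$ is therefore a complete, separable, densely circularly ordered set without endpoints, and such a set is order-isomorphic to the standard circular order on $S^1$, which yields the desired homeomorphism under the order topology.

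For step (ii), I would introduce a basis of the topology on $P\cup\partial P$. Points of $P$ keep their usual neighborhoods, while for an ideal point $\xi\in\partial P$ a basic neighborhood takes the form $U\cup A$, where $A\subset\partial P$ is an open arc about $\xi$ bounded by ends $\eta_1,\eta_2$ of leaves $l_1,l_2$ of $\F_1$ or $\F_2$, and $U\subset P$ is the open region bounded by the relevant sub-rays of $l_1$ and $l_2$ (together with possible additional leaves required to close up the region). Density of $\hat{E}$ in $\partial P$ yields that any two distinct ideal points can be separated by such neighborhoods, so the topology is Hausdorff, and it is second countable because it is built from a countable family of leaves. To identify $P\cup\partial P$ with the closed disk, I would exhaust $P$ by a nested sequence of bounded open regions $U_n$ whose boundaries consist of finitely many leaf segments whose ends together span denser and denser subsets of $\partial P$ in the circular order; since $P\cong\R^2$, such an exhaustion yields a natural homeomorphism to the standard compactification of $\R^2$ by a circle.

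Step (iii) then follows formally: a homeomorphism $\varphi$ of $P$ preserving the foliations sends leaves to leaves and hence induces a bijection on $E$. Because the circular order on $E$ is defined via simple closed curves in $P$ and the boundary orientation of the compact region they bound, $\varphi$ preserves this circular order up to global reversal, respects the order-theoretic equivalence relation defining $\hat{E}$, and so descends to $\hat{E}$ and extends to the completion $\partial P$. Continuity on $P\cup\partial P$ is then immediate from the definition of the basis, since $\varphi$ maps sector-type neighborhoods of an ideal point $\xi$ to sector-type neighborhoods of $\varphi(\xi)$. I expect the main obstacle to lie in step (ii), specifically in verifying that every ideal point admits arbitrarily small neighborhoods of the prescribed form. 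Ideal points that do not correspond to individual leaf ends but arise as limits of sequences of ends accumulating in $\partial P$ require the most care: one must use transversality of the two foliations to exhibit leaves of $\F_1$ or $\F_2$ whose ends approach $\xi$ from both sides in the circular order. Once this approximation property is established, the identification of $P\cup\partial P$ with the closed disk reduces to standard facts about exhausting $\R^2$ by topological disks whose boundary circles fill out the ideal circle in a monotone fashion.
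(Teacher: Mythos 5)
First, a point of comparison: the paper does not prove this statement at all --- it is quoted from Bonatti \cite{bonatti}, and the authors explicitly defer all details and proofs of the results in this subsection to that reference (and to Mather \cite{mather} for the analogous construction). So your proposal can only be judged as a reconstruction of the Mather--Bonatti argument. At that level your three-step decomposition (complete the circularly ordered set of leaf ends to get a circle; topologize $P\cup\partial P$ by sector-type neighborhoods at ideal points; extend foliation-preserving homeomorphisms by functoriality of the construction) is the right overall shape.

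There is, however, a genuine error in step (i): $E=\Ends(\F_1)\cup\Ends(\F_2)$ is \emph{not} countable. A foliation of the plane by lines has a continuum of leaves --- its leaf space is a nonempty second countable $1$-manifold, hence uncountable --- and second countability of $P$ does not bound the number of leaves. Consequently $E$, $\hat{E}$ and $\partial P$ are all uncountable, so your justification of separability of $\partial P$ (which is exactly what you need in order to invoke the order-theoretic characterization of $S^1$) collapses, as does the later claim that the topology on $P\cup\partial P$ is second countable ``because it is built from a countable family of leaves.'' Both can be repaired, but by a different argument: take a countable dense subset $D\subset P$ and verify that the ends of the leaves through points of $D$ are dense in $E$ for the circular order; this yields separability of $\partial P$ and a countable basis of sector neighborhoods. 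Beyond that, the point you flag yourself --- that every ideal point, including those that are not ends of any leaf, admits arbitrarily small neighborhoods of the prescribed form --- is where the real content of Bonatti's proof lies, and your sketch does not supply it. This is precisely the delicate behavior the rest of the paper is concerned with (gaps, infinite product regions): an ideal point may fail to be approximated by ends of one of the two foliations, so one genuinely needs ends of \emph{both} foliations, and a careful argument, to separate it from the rest of the circle.
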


There is a natural map $E \to \partial P$, assigning an ideal point to each end of a leaf. In general, the image of this map will be dense in, but not equal to, $\partial P$. However, we can say more: except in the case of certain bifoliated planes containing \emph{infinite product regions}, the image of each of the sets $\mathrm{Ends}(\F_1), \mathrm{Ends}(\F_2) \subset E$ is dense in $\partial P$.

\begin{definition}
An \emph{infinite product region} in $(P, \F_1, \F_2)$ is an open subset $U \subset P$ such that for $i, j = 1, 2$ and $i \neq j$,
\begin{enumerate}
    \item the boundary of $U$ consists of a compact segment of an $\F_i$-leaf $l$ and two $\F_j$-leaves, $j = 3 - i$, $f_1$, $f_2$ that intersect $l$;
    \item for each $x \in U$, $\F_i(x)$ intersects both $f_1$ and $f_2$.
\end{enumerate}If $U$ is an infinite product region, we say that \emph{$U$ is based in $\F_i$}, $i = 1, 2$, if the compact segment of $\partial U$ inside $P$ is on an $\F_i$-leaf.
\end{definition}

\begin{proposition}
    Let $P = (P, \F_1, \F_2)$ be a bifoliated plane. For $i = 1, 2$, $P$ contains no infinite product region based in $\F_i$ if and only if $\mathrm{Ends}(\F_i)$ is dense in $\partial P$.
\end{proposition}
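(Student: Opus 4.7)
The plan is to prove both implications by showing that an infinite product region based in $\F_i$ and an open arc of $\partial P$ free of $\F_i$-ideal-points are two manifestations of the same structure. If $U$ is an infinite product region based in $\F_i$, with boundary $l_0 \cup r_1 \cup r_2$ consisting of a compact $\F_i$-segment and two $\F_j$-rays (where $j = 3-i$), then the closure $\overline{U}$ in $P \cup \partial P$ meets $\partial P$ in a closed arc $\overline{J}$ whose endpoints are the ideal points $\xi_1, \xi_2$ of $r_1, r_2$, and the goal is to show that the open arc $J$ contains no $\F_i$-ideal-points.

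For the forward direction, suppose for contradiction that $m \in \F_i$ had an end with ideal point $\eta \in J$. Since $J$ lies in the interior of $\overline{U}$ as a subset of $P \cup \partial P$, a ray of $m$ converging to $\eta$ must eventually enter $U$. The product condition then forces $m$ to meet both $f_1$ and $f_2$, and because distinct transverse leaves in the simply connected plane intersect at most once, both ends of $m$ must exit $U$ through $f_1$ or $f_2$ and extend to ideal points on the opposite sides of these leaves from $U$, which are disjoint from $J$. This contradicts $\eta \in J$.

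For the reverse direction, assume $J \subset \partial P$ is a non-empty open arc with no $\F_i$-ideal-points. Since $\mathrm{Ends}(\F_1) \cup \mathrm{Ends}(\F_2)$ is dense in $\partial P$ by construction, $\mathrm{Ends}(\F_j)$ is dense in $J$. The plan is to pick ideal points $\xi_1, \xi_2 \in J$ of $\F_j$-rays $r_1, r_2$ on leaves $f_1, f_2$ so that the sub-arc $K$ of $J$ between $\xi_1$ and $\xi_2$ lies in $J$. Letting $R$ be the component of $P \setminus (f_1 \cup f_2)$ whose ideal boundary contains $\overline{K}$, I would then find an $\F_i$-leaf $l$ crossing $R$ from $f_1$ to $f_2$ and define $U$ as the component of $R \setminus l$ whose ideal boundary is $\overline{K}$.

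Verifying that $U$ is an infinite product region proceeds directly: for $x \in U$, the arc of $\F_i(x) \cap U$ containing $x$ cannot meet $l$ (distinct $\F_i$-leaves are disjoint) and cannot have an ideal endpoint in $\overline{K} \subset \overline{J}$ (no $\F_i$-ends in $J$), so its endpoints lie on $r_1 \cup r_2$; since $\F_i(x)$ meets each $f_k$ at most once, one endpoint is on $r_1$ and the other on $r_2$, giving the product condition. The main obstacle is producing the crossing leaf $l$, for which the strategy is to exploit density of $\F_j$-ideal-points in $J$: by choosing $\xi_1, \xi_2$ sufficiently close, any $\F_i$-leaf meeting $f_1$ near the end going to $\xi_1$ cannot end in $K \subset J$ or on $f_1$ a second time, so it must extend across $R$ and reach $f_2$.
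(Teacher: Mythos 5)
Your forward direction is correct and is essentially the paper's argument (the paper compresses it to ``by definition''); likewise, your verification that the region $U$ you construct satisfies the product condition is correct, and your overall strategy for the reverse direction --- pick two $\F_j$-leaves with ends in the ideal-point-free arc, produce an $\F_i$-leaf crossing both, then check the product condition --- is exactly the route the paper takes.

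The gap is in producing the crossing leaf $l$. You argue that the ray of an $\F_i$-leaf entering $R$ through $f_1$ ``cannot end in $K$ or on $f_1$ a second time, so it must extend across $R$ and reach $f_2$.'' This is a false dichotomy. The ideal boundary of $R$ consists of \emph{two} arcs: $\overline{K}=[\xi_1,\xi_2]$ and the arc $\overline{K'}$ joining the opposite ends $\xi_1'$ of $f_1$ and $\xi_2'$ of $f_2$. A ray entering $R$ that never meets $f_2$ can remain in $R$ and limit to an ideal point of $\overline{K'}$, which lies outside $J$ and is therefore not excluded by the hypothesis that $J$ contains no $\F_i$-ideal points. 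Taking $\xi_1,\xi_2$ close together in $\partial P$ gives no control over this: closeness of the ideal points says nothing about how the leaf travels through $R$, and in a non-trivial bifoliated plane leaves entering a strip between two transversals really can escape out the ``far'' ideal side. Ruling this out requires a genuine additional argument --- for instance, showing that if every leaf through the ray $r_1$ escaped to $\overline{K'}$, the resulting nested family of pairwise disjoint rays would either have to accumulate on an $\F_i$-leaf contained in $R$ or bound a nonempty region whose frontier in $P$ is forced into $f_2$ alone, and extracting a contradiction from either alternative. To be fair, the paper's own proof asserts the existence of this leaf with no justification either (``there exists $x'$ such that $\F_i(x')$ either bounds an infinite product region together with $l_1$ and $l_2$ or intersects one of them twice''), so your write-up is no less complete than the paper's at this point; but the specific justification you offer does not close the hole.
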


\begin{proof}
    We start with the \emph{if} direction. Suppose that $P$ contains an infinite product region $U$ based in $\F_i$. Let $I = \partial U \cap \partial P$, then by definition, $I \cap \Ends(\F_i) = \emptyset$. Thus $\Ends(\F_i)$ is not dense in $\partial P$. \par
    
    For the \emph{only if} direction, suppose that $P$ does not contain any infinite product region based in $\F_i$. We show that given any interval $I \subset \partial P$, there exists $x \in P$ such that $\partial \F_i(x) \cap I \neq \emptyset$. Suppose such $x$ does not exist. Pick two distinct $\F_j$-leaves $l_1, l_2$ such that each has an ideal point in $I$. Then there exists $x^\prime \in P$ such that $\F_i(x^\prime)$ either bounds an infinite product region together with $l_1$ and $l_2$ or intersects one of them twice. Neither is allowed here, so by contradiction we get the density of $\Ends (\F_i)$.
\end{proof}

So far in this section, we have made no mention of the orientations on the foliations $\F_1$ and $\F_2$ of a bifoliated plane. In the presence of such orientations, it will be useful to further divide elements of each $\mathrm{Ends}(\F_i)$ into positive and negative ideal points.
\begin{definition}
    Given a leaf $l$ of $\F_i$ equipped with the orientation induced from the foliation, we say an ideal point $x \in \mathrm{Ends}(\F_i)$ of $l$ is a \emph{positive ideal point of} $l$ if $x = \lim_{t \to +\infty} \gamma(t)$, where $\gamma \colon [0, +\infty) \to l$ is an orientation preserving and proper continuous map. 
    
    Otherwise (i.e.~if the same holds with $\gamma$ orientation reversing), we say $x$ is a \emph{negative ideal point} of $l$.

\end{definition}

We will also use the notion of \emph{quadrant} to describe relative positions of points.

\begin{definition}
    Let $x \in P = (P, \F_1, \F_2)$. We call each connected component of $P \setminus (\F_1(x) \cup \F_2(x))$ a \emph{quadrant} given by $x$. Let $\psi: P \to \R^2$ be an orientation-preserving local homeomorphism that sends $x$ to the origin, and sends $\F_1(x)$, $\F_2(x)$ to the oriented $x$-axis and $y$-axis, respectively. The pre-images of the first, second, third, and fourth quadrants in $\R^2$ under $\psi$ are said to be the \emph{top-left}, \emph{top-right}, \emph{bottom-left}, and \emph{bottom-right} quadrants of $x$, respectively.
\end{definition}

\subsection{Realizing ends on the circle at infinity}\label{subsection: realization of ends}

We build a natural correspondence between the set of ends of the leaf spaces of the bifoliation and certain subset of the boundary circle at infinity. We present the result here in the form of Proposition \ref{proposition: mapping to ends}. \par

To provide some intuition, recall that each point in the leaf space $\Lambda_i$ represents a leaf in the plane under the projection map $q_i$. Then the pre-image of each ray $r_i \in \Lambda_i$ under $q_i$ is an unbounded monotone sequence of $\F_i$-leaves $\{l_\alpha\}_{\alpha \in \mathcal{A}}$. We can then represent $\{l_\alpha\}_{\alpha \in \mathcal{A}}$ with a transverse ray $\tau \subset P$ that intersects every leaf in this sequence. Note that $\tau$ is not necessarily a half-leaf of $\F_j$, $j = 3 - i$. Moreover, $\tau$ cannot be contained in any compact subset of the plane, and we will show that its unbounded side must accumulate to a single point on the boundary circle at infinity. \par

We start the construction by a lemma that characterizes the limit set of a ray which has no accumulation inside a topological plane .
     
\begin{lemma}\label{lemma: curve connected limit set}
    Let $P$ be a topological plane. If $\tau \subset P$ is a ray with no accumulation inside $P$, then $ \partial^+ \tau = \lim_{t \to +\infty}\tau[t, +\infty)$ is a connected subset of $\partial P \cong \mathbb{S}^1$.
\end{lemma}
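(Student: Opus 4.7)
The plan is to work inside the compactification $P \cup \partial P$, which by the preceding discussion is homeomorphic to the closed disk $\overline{D^2}$. For each $t \geq 0$, I would let $C_t$ denote the closure of $\tau[t, +\infty)$ taken in $P \cup \partial P$. Each $C_t$ is compact, since it is closed inside a compact space, and connected, being the closure of a connected set in a Hausdorff space. The family $\{C_t\}_{t \geq 0}$ is clearly nested: $C_{t'} \subseteq C_t$ whenever $t' \geq t$.

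The next step is to identify $\partial^+\tau$ with $\bigcap_{t \geq 0} C_t$ and observe that this intersection lies entirely in $\partial P$. Indeed, if some $x \in P$ belonged to every $C_t$, then every neighborhood of $x$ in $P$ would meet $\tau[t,+\infty)$ for every $t \geq 0$, which is exactly the statement that $x$ is an accumulation point of $\tau$ inside $P$; this contradicts the hypothesis. Hence $\bigcap_t C_t \subseteq \partial P$, and this intersection matches the definition of $\partial^+\tau$ as the limit of the tails $\tau[t,+\infty)$.

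With this in place, the conclusion follows from a classical point-set fact: in a compact Hausdorff space, the intersection of a decreasing family of nonempty compact connected subsets is nonempty, compact, and connected. Since $P \cup \partial P$ is compact Hausdorff and each $C_t$ is nonempty, compact, and connected, the intersection $\partial^+\tau = \bigcap_t C_t$ is a connected subset of $\partial P \cong \mathbb{S}^1$, as desired.

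I do not anticipate any real obstacle here; the entire argument is a short topological exercise once one recognizes that the no-accumulation hypothesis is precisely what forces the nested intersection $\bigcap_t C_t$ down into $\partial P$. The only place where one might pause is in justifying the nested-intersection statement, but this is a textbook result for compact Hausdorff spaces and needs no new ingredients specific to bifoliated planes.
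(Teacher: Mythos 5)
Your proof is correct and follows essentially the same route as the paper's: the paper's one-line argument ("since $\tau$ is connected, $\partial^+\tau$ is connected in $P\cup\partial P$, so disconnectedness in $\partial P$ would force $\partial^+\tau\cap P\neq\emptyset$") is exactly your nested-intersection-of-compact-connecteds fact combined with the no-accumulation hypothesis, just stated contrapositively and without the details. Your version merely makes explicit the identification $\partial^+\tau=\bigcap_t \overline{\tau[t,+\infty)}$ and the standard point-set lemma being invoked.
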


\begin{proof}
    Suppose that $\partial^+ \tau$ is disconnected in $\partial P$. Since $\tau$ is a connected ray, $\partial^+ \tau$ is connected in $P \cup \partial P$. Then $\partial^+ \tau \cap P \neq \emptyset$. But this contradicts that $\tau$ has no accumulation inside $P$.
\end{proof}

For each foliation $\F_i$, $i = 1, 2$, any interval $I$ in the leaf spaces $\Lambda_i$ is the image under the projection $q_i$ of some curve $\tau$ transverse to $\F_i$: given any point $x$ on a transverse curve $\tau$ to $\F_i$, $q_i(\F_i(x))$ a point in $\Lambda_i$, so $q_i(\tau)$ is connected in $\Lambda_i$ and contains more than one point, hence an interval. Since ends in each leaf space are equivalent classes of rays, it is only natural to study \emph{transverse rays} to a given foliation. \par

Unless otherwise mentioned, for the rest of this paper we assume transverse rays to be parameterized by $[t, +\infty)$, i.e.~embeddings of intervals of the form $[t, +\infty)$. 

Since we will be dealing with limits and accumulation sets frequently, to avoid excessive repeating of the term, we make the following convention by slightly abusing the notation $\partial$:

\begin{convention}
    \hfill
    \begin{enumerate}
        \item If $\tau \in P$ is a transverse ray to $\F_i$, then $\partial^+ \tau$ denotes the accumulation of $\tau$ in its unbounded direction;
        \item If $l \in \F_i$, then $\partial l$ denotes the set of (two) ideal points of $l$;
        %\item If $L \subset \F_i$ is a collection of accumulating leaves, then $\partial L$ denotes the set of accumulation points of ideal points of leaves in $L$.
    \end{enumerate}
\end{convention}

%In addition, for each $i = 1, 2$, if $\xi \in \partial P$ is an ideal point of some $\F_i$-leaf $l$, then we can extend the projection map to $q_i \colon P \cup \Ends(\F_i) \to \Lambda_i$ by setting $q_i(\xi)$

We describe some rules of how transverse curves in $P$ must behave. 

\begin{lemma}\label{lemma: transversal no double intersection}
    If $\tau$ is a curve in $P$ transverse to $\F_i$, then $\tau$ can only intersect an $\F_i$-leaf at most once. Moreover, if $\tau$ has an ideal point on $\partial P$ and intersects an $\F_i$-leaf $l$ in $P$, then $\tau$ and $l$ cannot share a common ideal point on $\partial P$.
\end{lemma}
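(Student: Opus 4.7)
The plan is to prove the two assertions separately by Jordan-curve arguments in $P$ and in the compactification $P\cup\partial P$; Part 2 will also invoke Part 1.

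For Part 1, I would assume for contradiction that $\tau$ meets some $\F_i$-leaf $l$ at two distinct points. Since $\tau$ is transverse to $\F_i$, the set $\tau\cap l$ is discrete, so one may choose $p_1=\tau(t_1)$ and $p_2=\tau(t_2)$ with $t_1<t_2$ consecutive along $\tau$, so that $\tau|_{(t_1,t_2)}$ misses $l$. The concatenation of $\tau|_{[t_1,t_2]}$ with the subarc of $l$ from $p_1$ to $p_2$ is then a Jordan curve in $P$, bounding a closed disk $D\subset P$. For each $t\in(t_1,t_2)$ the plaque $\F_i(\tau(t))\cap D$ is a compact arc that cannot meet the $l$-side of $\partial D$ (distinct $\F_i$-leaves are disjoint), and so must terminate at a second point $\tau(\sigma(t))$ of $\tau|_{(t_1,t_2)}$. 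This defines a fixed-point-free continuous involution $\sigma\colon(t_1,t_2)\to(t_1,t_2)$. By continuity of the foliation, $\sigma(t)\to t_2$ as $t\to t_1^{+}$ and $\sigma(t)\to t_1$ as $t\to t_2^{-}$, so $\sigma(t)-t$ changes sign on $[t_1,t_2]$; the intermediate value theorem produces a fixed point, contradicting that $\sigma$ has none.

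For Part 2, Part 1 reduces $\tau\cap l$ to a single point $p$. Suppose for contradiction that $\tau$ and $l$ share an ideal point $e\in\partial P$, with rays $\tau|_{[t_0,\infty)}$ and $l|_{[s_0,\infty)}$ starting at $p$ and both limiting to $e$. Their union together with $\{e\}$ is a simple closed curve $J$ in the disk $P\cup\partial P$ meeting $\partial P$ only at $e$, and thus bounds an open region $U$ whose closure intersects $\partial P$ only at $e$. For every $t>t_0$ the leaf $l_t:=\F_i(\tau(t))$ differs from $l$ by Part 1, and the half-leaf of $l_t$ entering $U$ at $\tau(t)$ can neither cross $l|_{[s_0,\infty)}$ (leaves of $\F_i$ are disjoint) nor return to $\tau$ (Part 1 again), so it is trapped in $U$. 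As a proper half-leaf it accumulates on $\partial P$; Lemma~\ref{lemma: curve connected limit set} forces the limit set to be connected, and since it is contained in $\overline{U}\cap\partial P=\{e\}$ it must equal $\{e\}$. Hence every $l_t$ has $e$ as an ideal endpoint.

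The main obstacle is the final step of Part 2: obstructing the resulting uncountable family of distinct $\F_i$-leaves $\{l_t\}_{t>t_0}$ all sharing the ideal endpoint $e$. My plan here is to bring in the transverse foliation $\F_j$, $j=3-i$. For an $\F_j$-leaf $g$ through a point of $U$, Part 1 applied to $g$ (which is transverse to $\F_i$) gives that $g$ meets each $l_t$ and $l$ at most once. A careful analysis of how $g$ enters and exits $U$ (it can leave only through $\tau$ or through $l|_{[s_0,\infty)}$) should either force a second intersection of $\tau$ with some $\F_i$-leaf, contradicting Part 1, or reveal the bifoliated structure of $U$ as an infinite product region based in $\F_j$, from which a symmetric trapping argument on $\F_j$-leaves and Part 1 with the roles of the foliations reversed yields the contradiction. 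Part 1 is essentially formal once the involution is set up; the real work in Part 2 is this combinatorial and topological endgame, where both foliations must be used simultaneously near the shared ideal point $e$.
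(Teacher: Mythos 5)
Your Part~1 is correct, though it takes a different route from the paper's: the paper notes that two intersections of $\tau$ with the same leaf force the transverse orientation of $\tau$ relative to $\F_i$ to reverse between them, producing a tangency by the intermediate value theorem, whereas you run a Jordan-curve argument with an exit-point involution. (A small simplification: you do not actually need the boundary limits $\sigma(t)\to t_2$ and $\sigma(t)\to t_1$, which are the most delicate claims in your setup; a continuous fixed-point-free involution of an open interval is already impossible, since $\sigma(t)-t$ would have constant sign while $\sigma\circ\sigma=\mathrm{id}$ forces that sign to reverse.)

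The genuine gap is the final step of Part~2. Your trapping argument coincides with the paper's: every leaf $\F_i(\tau(t))$, $t>t_0$, is forced to have $e$ as an ideal point, so an uncountable family of distinct $\F_i$-leaves shares the ideal point $e$. The paper closes the proof at exactly this point by citing Lemma~3.2 of \cite{bonatti}, which it invokes precisely to rule out an uncountable collection of leaves with a common ideal point. You instead propose to derive the contradiction from scratch using the transverse foliation $\F_j$, but what you write is a plan rather than an argument, and its second branch does not end in a contradiction as stated: discovering that $U$ carries a product structure for the two foliations is not by itself absurd, since infinite product regions are perfectly legitimate in a bifoliated plane (the paper devotes part of Section~\ref{section: circle at infinity} to them, and in a standard product region the trapped half-leaves land at \emph{distinct} points of a boundary interval). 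What makes your $U$ impossible is that $\overline{U}\cap\partial P$ is the single point $\{e\}$ rather than an interval, so all the trapped half-leaves are forced onto one ideal point; converting that into a contradiction is exactly the content of the countability statement the paper imports from \cite{bonatti}, and your sketch does not supply a substitute for it. Either cite that lemma, as the paper does, or prove it; the ``combinatorial and topological endgame'' you defer is the entire difficulty of Part~2.
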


\begin{proof}
    Suppose that $\tau$ intersects some $\F_i$-leaf $l$ at two distinct points $x_1$ and $x_2$. Then the transverse orientations on any sufficiently small neighbourhoods of $x_1$ and $x_2$ must be opposite. But this implies that there exists $x_3 \in \tau$ between $x_1$ and $x_2$ at which $\tau$ is tangent to $\F_i$, contradicting that $\tau$ is transverse to $\F_i$. \par

    Now suppose that $\tau \cap l = \tau(t_0)$ and $\partial^+ \tau \cap \partial l = \xi$. Since any $\F_i$-leaf can only intersect $\tau$ at most once and two distinct $\F_i$-leaves cannot intersect each other, for all $x \in \tau(t_0, +\infty)$, we have $\xi \in \partial \F_i(x)$. But $\{F_i(x)\}_{x \in \tau(t_0, +\infty)}$ is an uncountable collection of leaves, this is impossible by Lemma 3.2 in \cite{bonatti}.
\end{proof}

The next lemma shows that any transverse ray must land at some point on the circle at infinity.

\begin{lemma}\label{lemma: transverse ray lands on the boundary}
    If $\tau \subset P$ is a transverse ray to $\F_i$, then $\tau$ cannot accumulate inside $P$, meaning that there does not exist any $x \in P \setminus \tau$ such that $\tau$ intersects every neighbourhood of $x$. Furthermore, $\partial^+ \tau$ must be a point on $\partial P$.
\end{lemma}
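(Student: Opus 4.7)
The plan is to prove the two assertions in sequence.

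For the first, that $\tau$ has no accumulation inside $P$, I will exploit the fact that $\tau$ is a transverse ray, so the composition $q_i \circ \tau\colon [t, +\infty) \to \Lambda_i$ is a ray in the leaf space, and in particular a proper map. Suppose for contradiction that $\tau(t_n) \to x \in P$ with $t_n \to +\infty$. Continuity of $q_i$ gives $q_i(\tau(t_n)) \to q_i(x)$ in $\Lambda_i$. Since $\Lambda_i$ is a second-countable 1-manifold, $q_i(x)$ admits a neighborhood $V$ with compact closure $\overline{V}$. For large $n$ we have $q_i(\tau(t_n)) \in V$, hence $t_n \in (q_i \circ \tau)^{-1}(\overline{V})$; but properness forces this preimage to be compact in $[t, +\infty)$, contradicting $t_n \to +\infty$.

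For the second assertion, granted the first, Lemma \ref{lemma: curve connected limit set} yields that $\partial^+ \tau$ is a connected subset of $\partial P \cong \mathbb{S}^1$, hence a closed arc $A$. I will suppose $A$ has more than one point, so $A^\circ \neq \emptyset$, and derive a contradiction. The main step is to find an $\F_i$-leaf $l$ having at least one ideal point $\xi \in A^\circ$. Given such $l$, its two ideal points $\{\xi, \xi'\}$ split $\partial P$ into two open arcs $C_1$ and $C_2$; since $\xi$ lies in the interior of $A$, the arc $A$ meets both $C_1$ and $C_2$. The leaf $l$ separates $P$ into two components whose closures in $P \cup \partial P$ meet $\partial P$ in $\overline{C_1}$ and $\overline{C_2}$ respectively, so $\tau$ must enter both components infinitely often, crossing $l$ infinitely many times. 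This contradicts Lemma \ref{lemma: transversal no double intersection}, according to which $\tau$ meets each $\F_i$-leaf at most once.

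To produce the separator $l$, the easy case is when $\Ends(\F_i)$ is dense in $\partial P$: one simply picks any $\xi \in A^\circ \cap \Ends(\F_i)$ and takes the corresponding leaf. The main obstacle is the complementary case, in which the preceding proposition yields an infinite product region $W$ based in $\F_i$ whose ideal arc $J$ contains $A$. The plan there is to leverage the product structure of $W$: $\tau$ eventually enters $W$, and parametrizing $W \cong (0,1) \times \mathbb{R}$ with $\F_i$-leaves as horizontals, transversality forces the $(0,1)$-coordinate along $\tau$ to be monotone while properness of $q_i \circ \tau$ forces it to converge to an endpoint of $(0,1)$; this should pin $\partial^+ \tau$ to a single endpoint of $J$, again contradicting that $A$ has more than one point.
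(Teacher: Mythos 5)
Your first step contains a genuine gap: you assert that $q_i \circ \tau$ is a ray in $\Lambda_i$ and ``in particular a proper map,'' but this is exactly what fails in general. The paper itself shows (Lemmas \ref{lemma: R_i cannot be rays} and \ref{lemma: ray cannot land in a gap}) that the projection of a transverse ray to $\F_i$ can be a \emph{bounded} half-open interval $[a,b)$ in $\Lambda_i$ --- for instance when $\tau$ limits to an ideal point of an $\F_i$-leaf --- and in that case $q_i\circ\tau$ is not proper, so no contradiction with $t_n\to+\infty$ arises. Determining precisely when the projection \emph{is} a ray is the content of Lemma \ref{lemma: transversals are rays}, which comes after (and depends on) the present lemma, so you cannot assume it here. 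The paper's argument for the first assertion stays entirely in $P$: if $\tau$ accumulated at $x\in P$, then inside a foliated box around $x$ the transversality of $\tau$ would force it to cross some $\F_i$-leaf more than once, contradicting Lemma \ref{lemma: transversal no double intersection}.

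For the second assertion, your main line (connected limit set, find an $\F_i$-leaf with an ideal point in $A^\circ$, deduce a double crossing of that leaf) is the same as the paper's Case 2, and you are right to worry that such a leaf need not exist when $\mathrm{Ends}(\F_i)$ is not dense --- a point the paper's proof passes over silently. However, your proposed treatment of the product-region case does not work. Transversality to $\F_i$ (together with Lemma \ref{lemma: transversal no double intersection}) makes the \emph{leaf-space} coordinate of $\tau$ monotone, i.e.\ the $\R$-factor in your parametrization $W\cong(0,1)\times\R$ with $\F_i$-leaves horizontal; the $(0,1)$-coordinate, which records position \emph{along} the leaves, is subject to no such constraint and is free to oscillate with unbounded amplitude (consider $t\mapsto(t\sin t,\,t)$ in the trivially bifoliated plane, which is an embedded ray transverse to the horizontal foliation meeting every horizontal leaf exactly once, yet whose limit set is the entire ideal arc of ``upward'' ends). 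The subsequent appeal to properness of $q_i\circ\tau$ repeats the error of the first part. So the case you correctly isolated as the main obstacle remains unhandled, and it cannot be closed by monotonicity-plus-properness as sketched.
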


\begin{proof}
    Suppose that there exists $x \in P \setminus \tau$ that is accumulated by $\tau$. Take a small neighborhood $U$ of $x$, which is foliated by leaves of $\F_i$. Then $\tau$ necessarily intersects $\F_i(x)$ infinity many times as it is a transverse ray, which contradicts Lemma \ref{lemma: transversal no double intersection}. \par
    By Lemma \ref{lemma: curve connected limit set}, $\partial^+ \tau$ is a connected subset of $\partial P \simeq \mathbb{S}^1$ -- so it is either the entire $\mathbb{S}^1$, an interval or a single point. As a first case, suppose that $\partial^+ \tau = \mathbb{S}^1$. Pick a leaf $l \in \F_i$, which separates $P$ into two half-planes $H_1$ and $H_2$, and assume that $\tau(0) \in H_1$. Then there exists $t_1, t_2, t_3 \in \R$ such that $t_1 < t_2 < t_3$ and $\tau[0, t_1) \subset H_1, \tau(t_1, t_2) \subset H_2, \tau(t_2, t_3) \subset H_1$. But this implies that $\tau(t_1) \in l$ and $\tau(t_2) \in l$, contradicting Lemma \ref{lemma: transversal no double intersection}. For the second case, suppose that $\partial^+ \tau$ is some interval $I = [\gamma_1, \gamma_2] \subset \partial P$. Pick a leaf $l^\prime \in \F_i$ with at least one ideal point in $\mathring{I}$ and let $\xi$ be such ideal point. Then $\tau \cap l^\prime \neq \emptyset$. Let $z = \tau(t_0) \in \tau \cap l$. By Lemma \ref{lemma: transversal no double intersection}, $\tau(t_0, +\infty)$ is contained in exactly one of the half-planes separated by $l^\prime$, so we have either $\partial^+ \tau \subset [\gamma_1, \xi]$ or $\partial^+ \tau \subset [\xi, \gamma_2]$. But neither satisfies that $\partial^+ \tau = I$, so we have a contradiction. Therefore, we can only be in the third case, that is, $\partial^+ \tau$ must be a point on $\partial P$.
\end{proof}

It follows that every transverse ray has an ideal point. For this, we say that a transverse ray \emph{lands at} some point on the boundary at infinity. \par

Before stating and proving our main proposition, we build the subset on the boundary circle at infinity whose elements represent ends of leaf spaces. Intuitively, given a transverse ray $\tau$ to $\F_i$, $\tau$ landing at some point on $\partial P$ is a necessary condition for $q_i(\tau)$ to be a ray in $\Lambda_i$. But it is not sufficient. We first describe configurations on $\partial P$ that prevents us from getting rays in $\Lambda_i$.

\begin{lemma}\label{lemma: R_i cannot be rays}
    Let $\xi \in \partial P$ be an ideal point of some $\F_i$-leaf or an accumulation point of ideal points of non-separated $\F_i$-leaves, and let $\tau \subset P$ be a transverse ray to $\F_i$ landing at $\xi$. Then $q_i(\tau)$ is bounded in $\Lambda_i$. In particular, since $q_i(\tau)$ is connected, it must be an interval and thus cannot be a ray. 
\end{lemma}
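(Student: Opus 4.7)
The plan is to argue by contradiction: suppose $q_i(\tau)$ is unbounded in $\Lambda_i$, and derive a contradiction with $\tau$ landing at $\xi$. The first observation is that by Lemma \ref{lemma: transversal no double intersection}, $\tau$ intersects each $\F_i$-leaf at most once, so $q_i \circ \tau : [t_0, +\infty) \to \Lambda_i$ is a continuous injection whose image is an embedded arc. If this arc is not bounded, then $q_i \circ \tau$ is a proper map (and $q_i(\tau)$ is a ray), meaning that for every compact subset $K \subset \Lambda_i$ the preimage $(q_i \circ \tau)^{-1}(K)$ is bounded in $[t_0,+\infty)$. The strategy is to exhibit a sequence $t_n \to +\infty$ along which $q_i(\tau(t_n))$ converges in $\Lambda_i$, contradicting this properness.

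I would handle the first hypothesis on $\xi$ (that $\xi$ is an ideal point of some $l \in \F_i$) as the main case. By Lemma \ref{lemma: transversal no double intersection}, $\tau \cap l = \emptyset$, so $\tau$ lies in one connected component $R_1$ of $P \setminus l$; since leaves of $\F_i$ cannot cross $l$, every leaf $L_t := \F_i(\tau(t))$ also lies in $R_1$. Pick $t_n \to +\infty$ so that $\tau(t_n) \to \xi$ in the closed disk $P \cup \partial P$. After passing to a subsequence, the closures $\overline{L_{t_n}} \subset P \cup \partial P$ Hausdorff-converge to a compact set $\Omega$ with $\xi \in \Omega$. Since the $L_{t_n}$ are pairwise disjoint $\F_i$-leaves contained in $R_1$, the set $\Omega \cap P$ is a closed $\F_i$-saturated subset of $\overline{R_1}$ and hence decomposes as a union of $\F_i$-leaves. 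Among these limit leaves there must be some $l^*$ whose closure in $P \cup \partial P$ contains $\xi$; such an $l^*$ has $\xi$ as an ideal point, which forces $l^*$ to be non-separated from $l$ (or equal to $l$). Thus $q_i(L_{t_n}) \to q_i(l^*)$ in $\Lambda_i$, and taking $K$ to be a compact neighborhood of $q_i(l^*)$ gives $q_i(\tau(t_n)) \in K$ for infinitely many $n$, contradicting properness.

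The second hypothesis on $\xi$ (that $\xi$ is an accumulation point of ideal points of pairwise non-separated $\F_i$-leaves $\{l_k\}$) reduces to a parallel argument. If $\tau$ crosses some $l_k$, then past that crossing $\tau$ is confined to one side of $l_k$ and the Case 1 argument applies with $l_k$ in place of $l$. Otherwise $\tau$ avoids every $l_k$, and for large $t$ the point $\tau(t)$ is trapped in a region of $P$ whose boundary is formed by nearby $l_k$'s closing in on $\xi$; the same Hausdorff-limit argument then yields a limit leaf $l^*$ with $\xi$ in its closure, which must lie in the common cataclysm of the $l_k$'s, producing a convergent subsequence $q_i(L_{t_n}) \to q_i(l^*)$ and the same contradiction.

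The main obstacle is the Hausdorff-limit step: rigorously showing that the Hausdorff limit of pairwise disjoint, properly embedded leaves is again a union of $\F_i$-leaves, and identifying the limit leaves through $\xi$ with points of the correct cataclysm of $\Lambda_i$. This will rely on the compactification $P \cup \partial P$ and the correspondence between ends of leaves and ideal points from Section \ref{subsection: circle at infinity}, together with the fact that the non-crossing property for $\F_i$-leaves, combined with the confinement of $L_{t_n}$ to $\overline{R_1}$, passes to the Hausdorff limit. A subtle technical point is that convergence in the Hausdorff topology on closed subsets of the disk must be carefully translated into convergence in the non-Hausdorff leaf space $\Lambda_i$, which is the reason one can only guarantee convergence up to choosing a limit leaf $l^* \in \Omega$ rather than uniqueness of the limit in $\Lambda_i$.
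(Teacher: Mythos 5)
Your overall strategy is different from the paper's, and it contains a gap at its central step. The paper argues directly: since $\tau$ and $l$ share the ideal point $\xi$, Lemma \ref{lemma: transversal no double intersection} forces $\tau \cap l = \emptyset$, so every leaf $\F_i(x)$ with $x \in \tau$ lies on one side of $l$, and $q_i(l)$ (respectively, any point of the cataclysm $q_i(L)$ in the non-separated case) is exhibited as an upper bound for the connected set $q_i(\tau)$; no contradiction argument or limit extraction is needed. Your proof instead runs by contradiction through properness and a Hausdorff limit, and the step ``among these limit leaves there must be some $l^*$ whose closure in $P \cup \partial P$ contains $\xi$'' is exactly where it breaks down. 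Note that $q_i \circ \tau$ being proper is \emph{equivalent} to the leaves $L_{t_n}$ escaping every compact subset of $P$, i.e.\ to the Hausdorff limit $\Omega$ being entirely contained in $\partial P$. So under your contradiction hypothesis, $\Omega \cap P$ is empty and there is no limit leaf $l^*$ at all; asserting that one exists is assuming the negation of the hypothesis you are trying to contradict, i.e.\ assuming the conclusion. The facts you invoke to justify it (confinement of the $L_{t_n}$ to one component of $P\setminus l$, the non-crossing property passing to the limit, $\xi \in \Omega$) are all consistent with $\Omega \subset \partial P$: a point $\tau(t_n)$ being close to $\xi$ in the disk topology does not force the leaf through it to stay near $l$ in $P$. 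You flag the Hausdorff-limit step as the ``main obstacle,'' but you locate the difficulty in decomposing $\Omega \cap P$ into leaves, which is not the issue; the issue is producing a point of $\Omega \cap P$ in the first place, and that is where the hypothesis on $\xi$ must do real work.

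Two smaller points. In your Case 2, you cannot simply ``apply the Case 1 argument with $l_k$ in place of $l$'': Case 1 used that $\xi$ is an ideal point of the barrier leaf (both to get $\tau \cap l = \emptyset$ via Lemma \ref{lemma: transversal no double intersection} and to identify the limit leaf), whereas $\xi$ need not be an ideal point of any individual $l_k$; the paper instead bounds $q_i(\tau)$ by an arbitrary point of the cataclysm $q_i(L)$. Also, your claim that a limit leaf $l^*$ with ideal point $\xi$ must be non-separated from $l$ is not justified and is not needed for your intended contradiction. I would recommend abandoning the contradiction/compactness framing and arguing directly, as the paper does, that the leaf $l$ (or the cataclysm $q_i(L)$) is an upper bound for $q_i(\tau)$ in $\Lambda_i$.
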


\begin{proof}
    Let $x \in \tau$. If there exists some $l \in \F_i$ such that $l$ has an ideal point at $\xi$, then $q_i(\F_i(x))$ is bounded from above by $q_i(l)$. If there exists a collection of non-separated leaves $L \subset \F_i$ such that its elements have ideal points accumulating to $\xi$, then $q_i(\F_i(x))$ is bounded from above by any point contained in the cataclysm $q_i(L)$. Since this is true for any $x$, we conclude that $q_i(\tau)$ is bounded.
\end{proof}

%\begin{definition}
%    Let $l_1, l_2 \in \F_i$ be two non-separated leaves with $l_1 < l_2$. If they do not share any common ideal point, and if there is no $l_3 \in \F_i$ non-separated from both $l_1$ and $l_2$ such that $l_1 < l_3 < l_2$, then we call the interval bounded by the positive ideal point of $l_1$ and the negative ideal point of $l_2$ the \emph{gap} between $l_1$ and $l_2$. We also say that it is a gap in $\F_i$.
%\end{definition}

\begin{definition}
    Let $\{l_n\}_{n \in \N} \subset \F_i$ be a convergent sequence of leaves such that $\lim_{n \to \infty} l_n$ contains at least one $\F_i$-leaf. A \emph{gap} in $\F_i$ is a connected component of $\lim_{n \to \infty} l_n \cap \partial P$ that has non-empty interior in $\partial P$.
\end{definition}

\begin{remark}
    If $\mathcal{G}$ is a gap in $\F_i$, then by definition, $\mathring{\mathcal{G}}$ does not contain any ideal point of $\F_i$-leaves. Moreover, at least one endpoint of $\partial \mathcal{G}$ is an ideal point of some $\F_i$-leaf or an accumulation point of ideal points of non-separated $\F_i$-leaves, otherwise $\lim_{n \to \infty} l_n$ cannot contain any $\F_i$-leaf.
\end{remark}

Note that with this definition, for each $\F_i$, there are two types of infinite product regions: ones whose boundary component in $\partial P$ is contained in a gap in $\F_i$, and ones whose boundary component in $\partial P$ is not part of any gap. Since these types will be crucial to our construction, we make the definition formal here:

\begin{definition}
     Let $U$ be an infinite product region based in $\F_i$, and let $I = \partial U \cap \partial P$. If $I$ is contained in some gap in $\F_i$, then $U$ is said to be a \emph{gap product region based in $\F_i$}. Otherwise, $U$ is said to be a \emph{standard product region based in $\F_i$}.
\end{definition}

\begin{figure}[h]
    \centering
    \includegraphics[scale = 0.85]{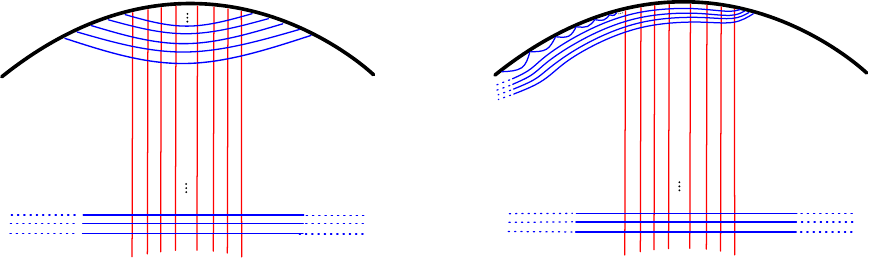}
    \caption{On the left is a standard product region based in the \emph{blue} foliation, and on the right is an example of a gap product region based in the \emph{blue} foliation with accumulating non-separated \emph{blue} leaves on one side.}
    \label{fig: types of product regions}
\end{figure}

\begin{lemma}\label{lemma: ray cannot land in a gap}
    The projection of any transverse ray to $\F_i$ landing in a gap in $\F_i$ is bounded $\Lambda_i$.
\end{lemma}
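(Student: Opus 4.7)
The plan is to reduce the statement to Lemma \ref{lemma: R_i cannot be rays} by exhibiting an $\F_i$-leaf $m$ (or a cataclysm of pairwise non-separated $\F_i$-leaves) that traps the tail of $\tau$ on one specific side of $m$, and then deducing boundedness of $q_i(\tau)$ from the resulting one-sided bound in the partial order on $\Lambda_i$ by exactly the same mechanism used at the end of that lemma's proof.

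Let $\xi \in \mathcal{G}$ denote the landing point of $\tau$. My first step is a case split: if $\xi$ is itself an ideal point of some $\F_i$-leaf or an accumulation of ideal points of pairwise non-separated $\F_i$-leaves, Lemma \ref{lemma: R_i cannot be rays} applies immediately. In the remaining case, the remark after the definition of a gap guarantees that at least one endpoint $\eta$ of $\mathcal{G}$ is of the ``good'' type --- either the ideal point of some $\F_i$-leaf $m$, or the accumulation of ideal points of a collection of pairwise non-separated $\F_i$-leaves $\{m_k\}$. I will carry out the argument in the single-leaf subcase; the non-separated subcase runs identically with the cataclysm $q_i(\{m_k\}) \subset \Lambda_i$ playing the role of the point $q_i(m)$.

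The second step is to show that $\tau$'s tail eventually lies on one particular side of $m$. The leaf $m$ separates $P$ into two half-planes $H_1, H_2$, and its two ideal points $\eta, \eta' \in \partial P$ divide $\partial P$ into two open arcs. Both $\eta$ and $\eta'$ are ideal points of the $\F_i$-leaf $m$, and by the case hypothesis $\xi$ is not an ideal point of any $\F_i$-leaf, so $\xi \notin \{\eta, \eta'\}$; hence $\xi$ lies in the interior of exactly one of those arcs, without loss of generality the one in $\overline{H_1} \cap \partial P$. Any sufficiently small neighborhood of $\xi$ in $P \cup \partial P$ then lies in $\overline{H_1}$ and avoids $m$; combining this with Lemma \ref{lemma: transversal no double intersection} (which tells us that $\tau$ meets $m$ at most once) produces some $t_0 \geq 0$ with $\tau((t_0, \infty)) \subset H_1$.

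For every $t > t_0$, the leaf $\F_i(\tau(t))$ is contained in $H_1$, so $q_i(\F_i(\tau(t)))$ lies strictly on one specific side of $q_i(m)$ in the partial order on $\Lambda_i$ (above if $H_1$ is the front of $m$, below if it is the back). Combined with the compactness of $\tau([0, t_0])$, this one-sided bound forces $q_i(\tau)$ to be bounded in $\Lambda_i$ by exactly the argument that concludes the proof of Lemma \ref{lemma: R_i cannot be rays}. The step I expect to require the most care is the separation $\xi \notin \{\eta, \eta'\}$: this is where the defining property of a gap --- the absence of $\F_i$-leaf ideal points in $\mathring{\mathcal{G}}$ --- becomes essential, because without it $\xi$ could coincide with $\eta'$ and transversality alone would not prevent $\tau$ from approaching $m$ from both sides rather than being trapped on one.
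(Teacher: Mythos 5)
Your reduction to Lemma \ref{lemma: R_i cannot be rays} breaks down at the final step. Knowing that the tail $\tau((t_0,\infty))$ lies in one half-plane $H_1$ of $P\setminus m$ only tells you that the tail of $q_i(\tau)$ lies in $q_i(H_1)$, which is one connected component of $\Lambda_i\setminus\{q_i(m)\}$ --- and such a component can perfectly well be unbounded (think of $\Lambda_i\cong\R$, $q_i(m)=0$, and an arc $[1,+\infty)\subset(0,+\infty)$). A ``one-sided bound'' by $q_i(m)$ is only useful if it is an upper bound in the direction the arc $q_i(\tau)$ is actually heading; in your ``$H_1$ is the front of $m$'' case it is a lower bound and gives nothing, and even in the other case you have not shown that the points of $q_i(\tau)$ are comparable to $q_i(m)$ in the partial order (comparability can fail in a non-Hausdorff leaf space). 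In Lemma \ref{lemma: R_i cannot be rays} the bound works because $\tau$ limits onto an ideal point of $l$ itself, so the leaves it crosses converge to $l$; here $\xi\neq\eta$, and nothing in your argument forces the leaves $\F_i(\tau(t))$ to approach $m$ at all.

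What is missing is precisely the defining feature of a gap: there is a sequence of leaves $l_n$ whose limit contains both $m$ and the whole interval $\mathcal{G}$, while the ideal points of each $l_n$ avoid $\mathring{\mathcal{G}}$. Hence for $n$ large, $l_n$ separates $\xi$ from any fixed initial segment of $\tau$, so $\tau$ must cross $l_n$; then $q_i(l_n)\in q_i(\tau)$ and $q_i(l_n)\to q_i(m)$, which is what actually bounds $q_i(\tau)$. The paper avoids this direct separation argument by first showing that any two transverse rays landing in $\mathcal{G}$ eventually cross the same set of $\F_i$-leaves (so their projections are simultaneously bounded or unbounded), and then applying Lemma \ref{lemma: R_i cannot be rays} only to a ray landing at the ``good'' endpoint $\eta$ of the gap. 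Either route can be completed, but your write-up as it stands skips the step that makes the gap hypothesis do any work.
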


\begin{proof}
    Let $\mathcal{G}$ be a gap in $\F_i$. For any two transverse rays $\tau_1, \tau_2$ to $\F_i$ that land in $\mathcal{G}$, there exists $x_1 \in \tau_1$ and $x_2 \in \tau_2$ such that $\F_i(x_1) = \F_i(x_2)$. Then with reparametrization, there exists $t_0 \in (0, +\infty)$ such that $\tau_1[t_0, +\infty) = \tau_2[t_0, +\infty)$, so $q_i(\tau_1)$ and $q_i(\tau_2)$ are either both unbounded or both bounded. \par
    
    To prove the lemma, first, let $\tau \subset P$ be a transverse ray to $\F_i$ landing on some point in $\partial \mathcal{G}$ that is an ideal point of some $\F_i$-leaf or an accumulation point of ideal points of non-separated $\F_i$-leaves. Then $q_i(\tau)$ is bounded by Lemma \ref{lemma: R_i cannot be rays}. Now, if $\tau^\prime \subset P$ is a transverse ray to $\F_i$ that lands at any point in $\mathcal{G}$, then by the first paragraph of this proof, $q_i(\tau^\prime)$ must also be bounded.
\end{proof}

For $i = 1, 2$, let
\begin{align*}
    E_i = \{\xi \in \partial P \mid \xi \textrm{ is an ideal point of some transverse ray to } \F_i\}, 
\end{align*}
let
\begin{align*}
    R_i = \{\eta \in \partial P \mid \eta &\textrm{ is neither an ideal point of any } \F_i \textrm{ leaf} \\ &\textrm{ nor accumulation of ideal points of non-separated } \F_i \textrm{-leaves}\},
\end{align*}
and let
\begin{align*}
    N_i = \partial P \backslash \{\textrm{gaps in $\F_i$}\}.
\end{align*}
 
Let $S_i = E_i \cap R_i \cap N_i$, then we have the following results:

\begin{lemma}\label{lemma: transversals are rays}
    The projection of any transverse ray landing at some point in $S_i$ is a ray in $\Lambda_i$.
\end{lemma}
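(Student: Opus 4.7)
My plan is to argue by contradiction: assume the projected curve is bounded in $\Lambda_i$, then force $\xi$ into one of the three categories excluded by $S_i$. First I observe that $q_i\circ\tau\colon[0,+\infty)\to\Lambda_i$ is injective (by Lemma \ref{lemma: transversal no double intersection}) and that its image is a monotone arc, since along a transverse ray the crossing direction with $\F_i$-leaves cannot reverse without producing a tangency. Hence $q_i(\tau)$ is an injective continuous monotone arc in $\Lambda_i$, and it is a ray precisely when it is not bounded.

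Suppose for contradiction that there is a sequence $t_n\to+\infty$ along which $q_i(\tau(t_n))$ accumulates in $\Lambda_i$. Because $\{q_i(\tau(t_n))\}$ is monotone and $\Lambda_i$ is a simply connected $1$-manifold, its accumulation set lies in a single cataclysm; equivalently, the leaves $l_n:=\F_i(\tau(t_n))$ accumulate onto a collection $L\subset\F_i$ of pairwise non-separated leaves. From $\tau(t_n)\in l_n$ together with Lemma \ref{lemma: transverse ray lands on the boundary}, we have $\tau(t_n)\to\xi$ inside $P\cup\partial P$, so $\xi\in\lim_n l_n\cap\partial P$.

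At this point I would use the structure of $\lim_n l_n\cap\partial P$ from Bonatti's construction of $\partial P$ in \cite{bonatti} to conclude that each connected component of this set is either a singleton -- which is then either an ideal endpoint of some leaf in $L$ or an accumulation point of such endpoints -- or a closed arc with nonempty interior, which by definition is a gap in $\F_i$. Now I case-split on $\xi$: if $\xi$ lies in a gap component, then $\xi$ is in a gap in $\F_i$, contradicting $\xi\in N_i$; otherwise $\xi$ is a singleton component, and since the leaves of $L$ are pairwise non-separated, $\xi$ is either an ideal point of an $\F_i$-leaf or an accumulation of ideal points of non-separated $\F_i$-leaves, contradicting $\xi\in R_i$.

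The main obstacle will be making the two structural claims about limits precise: that a bounded monotone sequence in the possibly non-Hausdorff $\Lambda_i$ has all its accumulation points inside a single cataclysm, and that $\lim_n l_n\cap\partial P$ decomposes cleanly into singletons (ideal points or accumulations thereof) and gap intervals. Both should follow from the $1$-manifold structure of the leaf space and the compactification $P\cup\partial P$, but the case analysis above only becomes truly exhaustive once these are established, and it is here that care is required.
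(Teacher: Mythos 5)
Your proposal is correct and follows essentially the same route as the paper: assume $q_i(\tau)$ is bounded, extract the limit leaf/cataclysm in $\Lambda_i$, and show that the resulting position of $\xi$ in the limit set violates the defining conditions of $S_i$. The paper's version is more compact — it writes $q_i(\tau)=[a,b)$ and derives a contradiction with $\xi\in R_i$ alone from the leaf over $b$ — whereas you additionally route the gap case through $\xi\in N_i$; the structural facts you flag at the end (accumulation points of a monotone sequence lie in one cataclysm, and components of the limit set on $\partial P$ are singletons or gaps) are exactly the standard facts the paper also uses implicitly, so there is no real gap.
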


\begin{proof}
    Let $\xi \in S_i$ and let $\tau \subset P$ be a transverse ray to $\F_i$ that lands at $\xi$ with an initial point $x_0 \in P$. We show that $q_i(\tau)$ is unbounded in $\Lambda_i$, then the statement follows since $q_i(\tau)$ is connected. Suppose otherwise, and let $a = q_i(x_0) \in \Lambda_i$. Then there exists $b \in \Lambda_i$ such that $q_i(\tau) = [a, b)$. Then the pre-image of $b$ under $q_i$ is an $\F_i$-leaf that either has an ideal point at $\xi$ or is contained in some non-separated collection of $F_i$-leaves whose ideal points accumulate to $\xi$. But either of these means that $\xi \notin R_i$, and we have a contradiction.
\end{proof}

\begin{figure}[h]
    \labellist
    \small \hair 2pt
    \pinlabel $U$ at 122 56
    \pinlabel $\partial P$ at -13 81
    \endlabellist
    \centering
    \includegraphics[scale = 0.75]  {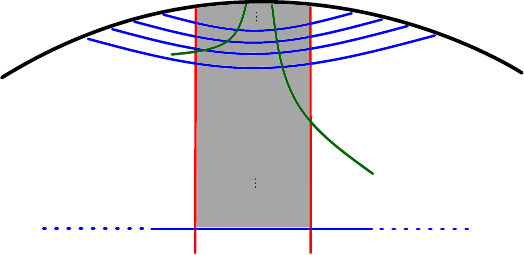}
    \caption{An end realized by the boundary of an infinite product region on $\partial P$}
    \label{fig: ends on boundary}
\end{figure}

\begin{lemma}\label{lemma: transversals landing at the same point represent an end}     
    If $\tau_1$ and $\tau_2$ are two distinct transverse rays to $\F_i$ that land at the same ideal point $\xi \in S_i$, then their projections are rays in the same equivalent class in $\Lambda_i$, i.e.~they represent the same end in $\Lambda_i$. Moreover, if $P$ contains a standard product region $U$ based in $\F_i$, then all transverse rays landing in $I = \partial U \cap \partial P$ represent the same end in $\Lambda_i$.
\end{lemma}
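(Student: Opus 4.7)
The plan is, in each part, to show that the tails of $\tau_1$ and $\tau_2$ cross exactly the same set of $\F_i$-leaves; this gives $q_i(\tau_1)$ and $q_i(\tau_2)$ a common sub-ray in $\Lambda_i$, hence the same end.

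For the first part, I first invoke Lemma~\ref{lemma: transversals are rays} to see that both $q_i(\tau_1)$ and $q_i(\tau_2)$ are rays in $\Lambda_i$. Pick any $\F_i$-leaf $l$ crossing $\tau_1$ at $\tau_1(t_0)$; by Lemma~\ref{lemma: transversal no double intersection} this intersection is a single point, and $l$ separates $P$ into two half-planes. Since $\xi \in S_i \subset R_i$, the point $\xi$ is not an ideal point of $l$, so $\xi$ lies in the ideal boundary of exactly one of the two half-planes of $l$. That half-plane must contain the tail of $\tau_1$ (because $\tau_1$ accumulates at $\xi$) and similarly the tail of $\tau_2$. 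Therefore $\tau_2$ must cross $l$, and does so exactly once again by Lemma~\ref{lemma: transversal no double intersection}, at some point $\tau_2(s_0)$. Applying the same reasoning with the roles of $\tau_1$ and $\tau_2$ swapped shows that the $\F_i$-leaves crossing $\tau_1[t_0, \infty)$ are precisely those crossing $\tau_2[s_0, \infty)$. Hence the sub-rays $q_i(\tau_1[t_0, \infty))$ and $q_i(\tau_2[s_0, \infty))$ coincide as subsets of $\Lambda_i$, so the two rays represent the same end.

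For the second part, suppose $\tau_1, \tau_2$ land at points $\xi_1, \xi_2 \in I$ and that $U$ is a standard product region based in $\F_i$. The idea is that the product structure of $U$ reduces this to a variant of the first-part argument, now applied to the 1-parameter family of $\F_i$-leaves through $U$. First, argue that the tails of $\tau_1, \tau_2$ eventually lie inside $U$: small neighborhoods of each $\xi_j$ in $P \cup \partial P$ intersect $P$ inside $U$ near $I$, using that $I \subset \partial U$ and that, because $U$ is standard and not a gap product region, no $\F_i$-leaves accumulate at points of $\mathring I$ from outside $U$. Once inside $U$, each transverse ray monotonically traverses the 1-parameter family of $\F_i$-leaves through $U$, and since it lands in $I$, it must cross every such leaf beyond some initial one. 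Hence there exist $t_0, s_0$ beyond which $\tau_1[t_0, \infty)$ and $\tau_2[s_0, \infty)$ cross exactly the same set of $\F_i$-leaves through $U$, so $q_i(\tau_1)$ and $q_i(\tau_2)$ share a common sub-ray in $\Lambda_i$ and represent the same end.

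The main obstacle is in the second part: rigorously establishing that a transverse ray landing at a point of $I$ must eventually enter and remain in $U$, and that its tail crosses all $\F_i$-leaves through $U$ sufficiently close to $I$. This requires a careful analysis of the topology of $P \cup \partial P$ near $I$ together with the product structure of $U$; the distinction between standard and gap product regions is exactly what ensures there are no extraneous $\F_i$-leaves accumulating at points of $\mathring I$ which would otherwise prevent the tails from eventually staying in $U$.
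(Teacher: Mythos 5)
There is a genuine gap in the first part, at the step ``Therefore $\tau_2$ must cross $l$.'' Knowing that the tail of $\tau_2$ lies in the half-plane $H^+$ of $l$ whose ideal boundary contains $\xi$ does not force $\tau_2$ to cross $l$: the entire ray $\tau_2$ (including its initial point $x_2$) may already be contained in $H^+$, in which case $\tau_2$ never meets $l$. Consequently your claimed equality of the sets of $\F_i$-leaves crossed by $\tau_1$ and $\tau_2$ is false in general, and --- more seriously --- your argument never establishes that $\tau_1$ and $\tau_2$ cross even one common $\F_i$-leaf, which is the actual crux of the lemma. If no leaf crosses both, then $q_i(\tau_1)$ and $q_i(\tau_2)$ are disjoint in $\Lambda_i$ and cannot represent the same end, so this configuration must be excluded, and excluding it is exactly where the hypothesis $\xi\in S_i$ (specifically $\xi\in R_i$) does real work. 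The paper's proof isolates this as its Case 2: if $\F_i(x_1)\cap\tau_2=\emptyset$ and $\F_i(x_2)\cap\tau_1=\emptyset$ and no positive half-leaf through $\tau_1$ reaches $\tau_2$, then the leaves $\F_i(x)$, $x\in\tau_1$, all separate $x_1$ from $x_2$ (hence meet a fixed compact arc), are nested, and have ideal points trapped in the arc $\partial V\cap\partial P$; as $x\to\xi$ they must converge to an $\F_i$-leaf with ideal point $\xi$ or to a non-separated family whose ideal points accumulate at $\xi$, contradicting $\xi\in R_i$. Your proof needs this (or an equivalent) argument; once a common crossed leaf $l_0$ exists, your monotonicity bookkeeping for the tails is fine.

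The second part also has a soft spot you partially acknowledge: the claim that every transverse ray landing in $I$ eventually lies in $U$ fails at the two endpoints of $I$, which are ideal points of the boundary leaves $f_1,f_2\in\F_j$; a transverse ray can land there while approaching from outside $\overline{U}$. The paper avoids this by showing $I\subset S_i$, observing that every point of $I$ is an ideal point of an $\F_j$-leaf meeting $U$, that all such $\F_j$-leaves cross the same $\F_i$-leaves inside $U$, and then invoking the first statement of the lemma to transfer the conclusion to an arbitrary transverse ray sharing that ideal point. Rerouting your second part through part 1 in this way would both close the endpoint issue and spare you the point-set analysis of $P\cup\partial P$ near $I$ that you flag as the main obstacle.
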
 

\begin{proof}
    Let $\tau_1$ and $\tau_2$ be two transverse rays to $\F_i$, both landing at $\xi \in S_i$, with different initial points $x_1$ and $x_2$, and let $r_1 = q_i(\tau_1)$ and $r_2 = q_i(\tau_2)$ be their corresponding rays in $\Lambda_i$. We breakdown the proof of the first statement into 2 cases. \par
    
    \noindent\textbf{Case 1:} If either $F_i(x_1) \cap \tau_2 \neq \emptyset$ or $\F_i(x_2) \cap \tau_1 \neq \emptyset$, let $s$ be a point of such intersection, then $r_1$ and $r_2$ agree after passing $q_i(s)$, so $r_1$ and $r_2$ represent the same end. \par

    \noindent\textbf{Case 2:} Suppose that $\F_i(x_1) \cap \tau_2 = \emptyset$ and  $\F_i(x_2) \cap \tau_1 = \emptyset$. Without loss of generality, let $V$ be the region bounded by $\tau_1, \tau_2$, the positive half-leaf of $\F_i(x_1)$, the negative half-leaf of $\F_i(x_2)$ and $\partial P$ (Figure \ref{fig: transversals landing at the same point}). Then either there exists $x_0 \in \tau_1$ such that the positive half-leaf of $\F_(x_0)$ intersects $\tau_2$, or for all $x \in \tau_1$ the positive half-leaf of $\F_(x)$ has an ideal point in $\partial V \cap \partial P$. Suppose we have the first case. With a slight abuse of notation, by considering $\tau_1$ with the initial point $x_0$ instead of $x_1$, we are back in \textbf{Case 1}. Now suppose we have the second case. Then for all $y \in \tau_2$, the negative half-leaf of $\F_i(y)$ cannot intersect $\tau_1$ and thus must have an ideal point in $\partial V \cap \partial P$. Then as $x$ and $y$ converge to $\xi$, $\F_i(x)$ and $\F_i(y)$ must converge to a single $F_i$-leaf that has an ideal point at $\xi$, contradicting $\xi \in S_i$. Since the second case cannot happen, we have that $[r_1] = [r_2]$. \par

    Now, suppose that $P$ contains a standard product region $U$ based in $\F_i$. Let $I = \partial U \cap \partial P$. By definition, $I$ is not a gap in $\F_i$, and it does not contain any point that is an ideal point of a $\F_i$-leaf. Moreover, for any $\xi \in I$, there exists a unique $\F_j$-leaf, $j = 3 - i$, $f$ such that $f \cap U \neq \emptyset$ and $\xi \in \partial f$. Hence, $I \subset S_i$, and every point in $I$ represents an end in $\Lambda_i$ by Lemma \ref{lemma: transversals are rays}. Since any two $\F_j$-leaves landing in $I$ intersect the same set of $\F_i$-leaves after entering $U$, all $\F_j$-leaves landing in $I$ represent the same equivalent class of rays in $\Lambda_i$. Since any transverse ray landing in $I$ shares an ideal point with some $\F_j$-leaf, by the first statement of this lemma, which we have proved, all transverse rays landing in $I$ represent the same end in $\Lambda_i$.
\end{proof}

\begin{figure}[h]
    \labellist
    \small \hair 2pt
    \pinlabel $V$ at 152 76
    \pinlabel $x_1$ at 75 78
    \pinlabel $x_2$ at 228 87
    \pinlabel $\tau_1$ at 118 114
    \pinlabel $\tau_2$ at 199 114
    \pinlabel $\xi$ at 157 176
    \endlabellist
    \centering
    \vspace{3mm}\includegraphics[scale = 0.65]{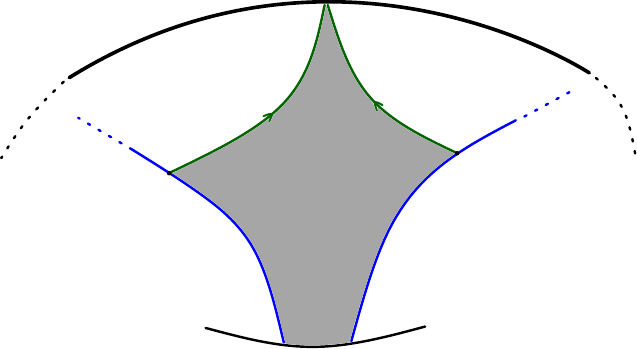}
    \caption{Two transversals landing at the same point}
    \label{fig: transversals landing at the same point}
\end{figure}

\begin{corollary}\label{corollary: increasing product regions}
    If $\{I_\alpha\}_{\alpha \in \mathcal{A}} \subset \partial P$ is an increasing (for inclusion) sequence of intervals such that each $I_\alpha$ is the boundary component of a standard product region $U_\alpha$ based in $\F_i$, then $I = \bigcup_{\alpha \in \mathcal{A}} I_\alpha$ represents the same end in $\Lambda_i$ as any $I_\alpha$.
\end{corollary}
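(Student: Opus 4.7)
The plan is to reduce the statement to Lemma~\ref{lemma: transversals landing at the same point represent an end} by exploiting the fact that nested intervals must share points; once we see that two nested $I_\alpha, I_{\alpha'}$ share a point witnessing the same end, the common end propagates across the whole family, and the union inherits it.

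First I would show that every $I_\alpha$ represents a single common end $e \in \mathrm{Ends}(\Lambda_i)$. Given two indices $\alpha, \alpha' \in \mathcal{A}$ with $I_\alpha \subseteq I_{\alpha'}$, pick any $\xi \in I_\alpha \subseteq I_{\alpha'}$. Because $U_\alpha$ and $U_{\alpha'}$ are standard product regions based in $\F_i$, the proof of Lemma~\ref{lemma: transversals landing at the same point represent an end} gives $I_\alpha, I_{\alpha'} \subseteq S_i$, so in particular $\xi \in S_i$. Applying that lemma first to $U_\alpha$ and then to $U_{\alpha'}$, we see that every transverse ray to $\F_i$ landing at $\xi$ projects to a ray in $\Lambda_i$ representing both the end assigned to $I_\alpha$ and the end assigned to $I_{\alpha'}$. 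Hence these two ends agree. Since the family is totally ordered by inclusion, this comparison extends to all pairs, yielding a single common end $e$ represented by every $I_\alpha$.

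Next I would extend the conclusion to the union. For any $\xi \in I$, pick $\alpha$ with $\xi \in I_\alpha$; then $\xi \in I_\alpha \subseteq S_i$, so Lemma~\ref{lemma: transversals landing at the same point represent an end} applied to $U_\alpha$ shows that any transverse ray to $\F_i$ landing at $\xi$ projects to a ray in $\Lambda_i$ representing $e$. This is precisely the assertion that $I$ represents the end $e$, which coincides with the end represented by any $I_\alpha$.

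There is no substantive obstacle in this argument: the only point requiring care is that ``the end represented by $I$'' should be well defined, i.e.\ that every point of $I$ lies in $S_i$ so that Lemma~\ref{lemma: transversals are rays} and Lemma~\ref{lemma: transversals landing at the same point represent an end} apply, and this is immediate from $I = \bigcup_\alpha I_\alpha$ together with $I_\alpha \subseteq S_i$. The proof is essentially a bookkeeping exercise on top of the previous lemma.
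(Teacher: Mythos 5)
Your reduction is correct for every point of $\bigcup_\alpha I_\alpha$ that actually lies in some $I_\alpha$: there the conclusion is indeed immediate from Lemma \ref{lemma: transversals landing at the same point represent an end}, and the paper dispatches that case in a single sentence. The genuine gap is at the endpoints of the union. Writing $I=[\xi,\xi']$, the points $\xi$ and $\xi'$ are in general only limits of the endpoints of the $I_\alpha$ and belong to \emph{no} $I_\alpha$ (this is exactly the situation in which the corollary is invoked in the proof of Proposition \ref{proposition: mapping to ends}, where the intervals $J_n$ are cut off by leaves $f_n, f'_n$ whose ideal points merely converge to $\xi$ and $\xi'$, and what is needed there is that the closed interval $[\xi,\xi']$ lies in $\Phi_i^{-1}([r])$). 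Your step ``pick $\alpha$ with $\xi\in I_\alpha$'' is unavailable for these limit points, so your argument says nothing about them; in particular it is not ``immediate'' that every point of $I$ lies in $S_i$.

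Handling the endpoints is where essentially all of the paper's proof lives. One must first produce a transverse ray to $\F_i$ landing at $\xi$ at all --- the paper does this by choosing points $x_n$ in $U=\bigcup_\alpha U_\alpha$ converging to $\xi$ with $x_{n+1}$ in a fixed quadrant of $x_n$ and concatenating segments, giving $\xi\in E_i$ --- and then check that $\xi\in R_i$ and $\xi\in N_i$ (otherwise $I$ would sit inside a gap in $\F_i$, contradicting that it is a union of boundary components of standard product regions), so that $\xi\in S_i$ and Lemma \ref{lemma: transversals are rays} applies. Only then does a continuity argument together with Lemma \ref{lemma: transversals landing at the same point represent an end} identify the end realized at $\xi$ with the common end $e$. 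None of this appears in your proposal, so as written it proves the statement only for the (possibly open) set-theoretic union and not in the closed form in which the paper states and uses it.
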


\begin{proof}
    Let $\xi, \xi^\prime \in \partial P$ so that $I = [\xi, \xi^\prime]$. If $I_\alpha \supset I_\beta$, then $I_\alpha$ and $I_\beta$ represent the same end by Lemma \ref{lemma: transversals landing at the same point represent an end}. So if $I$ is a boundary component of a standard product region, then $\xi$ and $\xi^\prime$ are ideal points of $\F_j$-leaves, $j = 3 - i$, and the statement follows since $I$ contains all $I_\alpha$, $\alpha \in \mathcal{A}$. \par
    
    Now suppose that $I$ does not bound any standard product region. Then $\xi$ and $\xi^\prime$ are accumulated by ideal points of $\F_j$-leaves that bound standard product regions in $\{U_\alpha\}_{\alpha \in \mathcal{A}}$. We need to show that both $\xi$ and $\xi^\prime$ are elements of $S_i$. Then we get that $I \subset S_i$, and the proof will be completed by continuity and Lemma \ref{lemma: transversals landing at the same point represent an end}. \par
    
    In fact, by symmetry, we will only need to show that $\xi$ is a point in $S_i$. Let $U = \bigcup_{\alpha \in \mathcal{A}} U_\alpha$. Let $x \in U$. Moreover, without loss of generality, fix a transverse orientation so that $\xi$ is in the top-left quadrant of $x$. Pick a sequence of points $\{x_n\}_{n \in \N} \subset U$ that converge to $\xi$ such that $x_1 = x$ and $x_{n+1}$ is in the top-left quadrant of $x_n$ for all $n \in \N$. Then we get a transverse ray to $\F_i$ that lands at $\xi$ by connecting $x_n$ and $x_{n+1}$ with a straight line for all $n \in \N$. Hence, $\xi \in E_i$. By construction, $\xi$ must be in $R_i$ because otherwise it implies that $I$ is a gap in $\F_i$, but $I$ is the union of boundary components of standard product regions. For the same reason, $\xi$ must be in $N_i$. Therefore, $\xi \in S_i$, and we are done.
    %Observe that given the way $\xi$ was constructed, it cannot be accumulated by ideal points of non-separated $\F_i$-leaves since there is no non-separated $\F_i$-leaves in a small neighborhood of $\xi$, so we only need to show that $\xi$ is not an ideal point of any $\F_i$-leaf. Suppose that $\xi$ is an ideal point of some $\F_i$-leaf $l$. Then $l \cap U \neq \emptyset$. Take $x \in U$ that also lies in the positive half-plane defined by $l$. Then $\partial \F_i(x) \cap (\xi, \xi^\prime) \neq \emptyset$, but this contradicts that any two distinct $\F_j$-leaves landing in $(\xi, \xi^\prime)$ define an standard product region based in $\F_i$, and we are done.
\end{proof}

We are now ready to complete the construction. Since a group action on $P$, preserving foliations and their orientations, induces an action on $\partial P$, with the following proposition we will have the tool to associate group actions on $P$ with group actions on the set of ends of the leaf spaces --- this is the crucial step in proving Theorem \ref{theorem: faithful}.

\begin{proposition}\label{proposition: mapping to ends}
    Let $\xi \in S_i$. For $i = 1, 2$, let $\tau_\xi$ be any transverse ray to $\F_i$ that has an ideal point at $\xi$, and let $r_\xi = q_i(\tau_\xi) \in \Lambda_i$. Then the map $\Phi_i \colon S_i \to \Ends(\Lambda_i)$ such that $\Phi_i(\xi) = [r_\xi]$ is well-defined and surjective, and the pre-image of any end in $\Ends(\Lambda_i)$ is either a single point, or a closed interval $I \in \partial P$ such that any two $\F_j$-leaves, $j = 3 - i$, ending in $\mathring{I}$ bound a standard product region based in $\F_i$.
    
    %$\partial I$ consists of either ideal points of $\F_j$-leaves that bound a standard infinite product region based in $\F_i$ or accumulation of such ideal points.
\end{proposition}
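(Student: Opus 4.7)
The plan is to handle well-definedness and surjectivity quickly using the preceding lemmas, then establish the fiber structure, which is where the main content lies. Well-definedness of $\Phi_i$ is immediate from Lemma~\ref{lemma: transversals landing at the same point represent an end}: distinct transverse rays to $\F_i$ landing at the same $\xi \in S_i$ project to the same end of $\Lambda_i$. For surjectivity, given an end $e \in \Ends(\Lambda_i)$ with representative ray $r$, I would lift $r$ to a transverse ray $\tau$ to $\F_i$ in $P$ by gluing local lifts through cross-sections of $q_i$; by Lemma~\ref{lemma: transverse ray lands on the boundary}, $\tau$ lands at an ideal point $\xi \in E_i$, and Lemmas~\ref{lemma: R_i cannot be rays} and \ref{lemma: ray cannot land in a gap} force $\xi \in R_i \cap N_i$ (otherwise $q_i(\tau)$ would be bounded, contradicting that $r$ is a ray). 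Hence $\xi \in S_i$ and $\Phi_i(\xi) = e$.

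For the fiber structure, assume $\Phi_i^{-1}(e)$ contains two distinct points $\xi_1, \xi_2$ with corresponding transverse rays $\tau_1, \tau_2$. Since both represent $e$, they eventually meet a common unbounded monotone sequence $\{l_n\} \subset \F_i$ with $\{q_i(l_n)\}$ ascending toward $e$ in $\Lambda_i$. The tails of $\tau_1, \tau_2$ together with the compact segment of $l_n$ between their intersection points with $l_n$ bound a nested sequence of regions $R_n \subset P$, each meeting $\partial P$ in the same closed arc $I$ from $\xi_1$ to $\xi_2$ on the end-side (the side into which both $\tau_i$ travel). I would then show $I \subseteq \Phi_i^{-1}(e)$: for any $\xi \in \mathring{I}$, a zigzag construction through the $R_n$'s yields a transverse ray landing at $\xi$ that meets all but finitely many $l_n$, so its projection represents $e$. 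To place $\xi \in S_i$, I rely on a trapping argument --- if $\xi$ were an ideal point of some $\F_i$-leaf, an accumulation of ideal points of non-separated $\F_i$-leaves, or inside a gap of $\F_i$, the corresponding leaves would sit on the end-side of every $l_n$ (their ideal points being in $\mathring{I}$) and yield an upper bound in $\Lambda_i$ for the unbounded sequence $\{q_i(l_n)\}$, a contradiction. Corollary~\ref{corollary: increasing product regions} then handles the endpoints to ensure $I$ is a closed interval in $\Phi_i^{-1}(e)$, and running the same construction on any pair of points in the fiber gives path-connectedness, yielding the reverse inclusion.

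Finally, given two $\F_j$-leaves $f, f'$ with ideal points in $\mathring{I}$, I would pick $n$ large enough that both $f, f'$ meet $l_n$ and consider the region $U$ bounded by the compact segment of $l_n$ between those intersection points together with the tails of $f, f'$ into $\mathring{I}$. Condition (1) for $U$ being an infinite product region based in $\F_i$ is by construction. For condition (2), I fix $x \in U$ and observe that $\F_i(x)$ cannot cross $l_n$, cannot have an ideal point in $\mathring{I}$ (since $\mathring{I} \subset R_i$), and meets each of $f, f'$ at most once (Lemma~\ref{lemma: transversal no double intersection}); a case-check then forces the two ends of $\F_i(x)$ to exit $U$ through $f$ and $f'$ respectively. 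Since $\mathring{I} \subset N_i$, the boundary arc of $U$ lies outside every gap of $\F_i$, so $U$ is a standard product region. The main obstacle is the trapping argument placing $\mathring{I}$ in $S_i$: one must carefully exploit that any $\xi \in \mathring{I}$ is strictly interior to $I$ so that any obstructing $\F_i$-leaf or limit thereof gets pinched between $\xi$ and $\{l_n\}$, and similarly handle the accumulation and gap cases with due care.
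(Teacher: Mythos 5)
Your proposal is correct and follows the paper's proof essentially verbatim for well-definedness and surjectivity (same lemmas, same lifting of a ray of $\Lambda_i$ to a transverse ray in $P$). Where you genuinely diverge is the fiber structure. The paper takes the common tail $r_\xi[t_0,+\infty)=r_{\xi^\prime}[0,+\infty)$, notes that $l_0$ and all later leaves cross both $\tau$ and $\tau^\prime$, and then uses density of ideal points of $\F_j$-leaves in $\partial P$ to replace $\tau,\tau^\prime$ by sequences of $\F_j$-leaves $f_n,f_n^\prime$ landing in $J=[\xi,\xi^\prime]$; each triple $(f_n,f_n^\prime,l_n)$ bounds a standard product region, and Corollary \ref{corollary: increasing product regions} applied to the increasing intervals $J_n$ gives $J\subset\Phi_i^{-1}([r])$ in one stroke, with the product-region condition for $\F_j$-leaves ending in $\mathring{I}$ built into the construction. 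You instead work with the nested regions cut off by the $l_n$ and the tails of $\tau_1,\tau_2$, produce a transverse ray to each interior point of $I$ by a zigzag, and verify membership in $S_i$ by trapping; you then check condition (2) for the product regions separately. Your route gives a more self-contained geometric picture of the fiber and makes the product-region claim explicit rather than implicit, but it has two soft spots that the paper's route avoids: (i) the feasibility of the zigzag (that it can be kept transverse while crossing cofinally many $l_n$ and still converge to a prescribed $\xi\in\mathring{I}$) silently presupposes that no $\F_i$-leaf has an ideal point in $\mathring{I}$ and that $\mathring{I}$ meets no gap --- i.e., the conclusion of your trapping argument --- so the trapping must be run first and must stand on its own, not via the zigzag ray plus Lemma \ref{lemma: R_i cannot be rays}; (ii) the trapping step ``an upper bound for the unbounded sequence $\{q_i(l_n)\}$ is a contradiction'' needs the same justification as Lemma \ref{lemma: R_i cannot be rays}, namely that being bounded above by a single point of $\Lambda_i$ confines a cofinal sequence of a proper ray to a compact arc of the (simply connected, possibly non-Hausdorff) leaf space. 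Both points are repairable and are at the same level of rigor the paper itself adopts; the paper simply sidesteps them by performing all transverse-ray constructions inside honest product regions, where they are immediate.
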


\begin{proof}
    The map $\Phi_i$ is well-defined by Lemma \ref{lemma: transversals are rays} and Lemma \ref{lemma: transversals landing at the same point represent an end}. \par
    
    We first check the surjectivity of $\Phi_i$. Let $[r] \in \Ends(\Lambda_i)$. Then in $\Lambda_i$, $r$ is a ray with some initial point $x_0$. The pre-image of $r$ under $q_i$ in $P$ is saturated by a collection of $\F_i$-leaves, $\{l_\alpha\}_{\alpha \in \mathcal{A}}$, with an ``initial'' leaf $l_{\alpha_0}$ whose projection in $\F_i$ is $x_0$. We can find some transverse ray $\tau \subset P$ to $\F_i$ that starts at some point on $l_{\alpha_0}$ and intersects every leaf of $\{l_\alpha\}_{\alpha \in \mathcal{A}}$ so that $q_i(\tau) = r$. By Lemma \ref{lemma: transverse ray lands on the boundary}, it lands at some point $\eta \in \partial P$. By Lemma \ref{lemma: ray cannot land in a gap}, we know that $\eta$ is not contained in any gap. We are left to check that $\eta \in R_i$, and this is true by Lemma \ref{lemma: R_i cannot be rays}. \par

    To prove the last part of the proposition, we show that if $[r]$ is an end of $\Lambda_i$ such that $\Phi_i^{-1}([r])$ contains distinct points $\xi, \xi^\prime$, then there exists an interval $I \in \partial P$ such that $\xi, \xi^\prime \in I$ and $\Phi_i^{-1}([r]) = I$. Let $\tau$ and $\tau^\prime$ be transverse rays to $\F_i$ that lands at $\xi$ and $\xi^\prime$, respectively. Then by Lemma \ref{lemma: ray cannot land in a gap}, $\xi$ and $\xi^\prime$ are not in the boundary of any gap product region based in $\F_i$. Let $r_\xi = q_i(\tau_\xi)$ and $r_{\xi^\prime} = q_i(\tau_{\xi^\prime})$. Since $r_\xi \sim r_{\xi^\prime}$, then up to switching $r_\xi$ and $r_{\xi^\prime}$ there exists $t_0 \in \mathbb{R}$ such that $r_\xi[t_0, +\infty) = r_{\xi^\prime}[0, +\infty)$. Let $l_0 \subset P$ be the pre-image of $r_\xi(t_0)$ under $q_i$. Then $l_0$, as well as every $\F_i$-leaf whose projection under $q_i$ is contained in $r_\xi(t_0, +\infty)$, intersect both $\tau$ and $\tau^\prime$. If both $\tau$ and $\tau^\prime$ are $F_j$-leaves, then $l_0, \tau$, and $\tau^\prime$ form a standard product region whose boundary component in $\partial P$ is $J = [\xi, \xi^\prime]$, and $J \subset \Phi_i^{-1}([r])$ by Lemma \ref{lemma: transversals landing at the same point represent an end}. Otherwise, since ideal points of leaves are dense in $\partial P$, we can find sequences of $\F_j$-leaves, $\{f_n\}_{n\in \N}$ and $\{f^\prime_n\}_{n\in \N}$, landing in $J$ whose ideal points converge to $\xi$ and $\xi^\prime$, respectively. Since $r_\xi[t_0, +\infty) = r_{\xi^\prime}[0, +\infty)$, then for each $n$, we can find some $l_n \in F_i$ with $q_i(l_n) \in r_\xi(t_0, +\infty)$ such that $f_n$, $f^\prime_n$, and $l_n$ form a standard product region whose boundary component in $\partial P$ is the closed interval $J_n$. By construction, $\{J_n\}_{n \in \N}$ is an increasing (for inclusion) sequence of intervals, thus $J = [\xi, \xi^\prime] = \bigcup_{n \in \N} J_n \subset \pi^{-1}_i([r])$ by Corollary \ref{corollary: increasing product regions}. Following from this, if there exists $\xi^\star$ outside of $J$ such that $\xi^\star \in \Phi_i^{-1}([r])$, then up to switching $\xi$ and $\xi^\prime$, we have $[\xi^\star, \xi^\prime] \subset \Phi_i^{-1}([r])$. In particular, if $\Phi_i^{-1}([r])$ contains any interval $J$, then $\Phi_i^{-1}([r])$ is equal to some maximal interval $I$, in the sense of inclusion, that contains $J$. 
\end{proof}

\begin{comment}
    Since transversals landing at a common ideal point correspond to the same equivalence class of rays in the leaf space, it is enough to check the following: if $\xi \neq \xi^\prime \in \partial P$, $\tau$ and $\tau^\prime$ are transversals to $\F_i$ that land at $\xi$ and $\xi^\prime$, respectively, then $\pi(\tau) \nsim \pi(\tau^\prime) \subset \Lambda_i$. Let $r = \pi(\tau)$ and $r^\prime = \pi(\tau^\prime)$. Suppose that we have $r \sim r^\prime$, then up to switching $r$ and $r^\prime$, there exists $t_0 \in \mathbb{R}$ such that $r[t_0, +\infty) = r^\prime[0, +\infty)$. Let $l_0 \subset P$ be the image of $r(t_0)$, then every $\F_i$ leaf on the positive side of $l_0$ intersects both $\tau$ and $\tau^\prime$, so $l_0, \tau$, and $\tau^\prime$ form an infinite product region, which contradicts our assumption.
\end{comment}

Thus, we make for the following definition:

\begin{definition}
    Let $S = S_1 \cup S_2$. We call elements of $S$ the \emph{realizations} of ends of leaf spaces on $\partial P$.
\end{definition}

It follows from Proposition \ref{proposition: mapping to ends} that there are two \emph{types} of realizations of ends: a single point or an interval. For example, the leaf spaces of a trivial bifoliated plane, which are both homeomorphic to $\R$, have a total of four ends, which are realized by four intervals each consisting of endpoints of leaves along with their accumulation points. \par

Recall the setting of Theorem \ref{theorem: faithful}: $G$ is a group that acts on $P$ by homeomorphisms, preserving foliations and their orientations. We present its proof here:

\begin{proof}[Proof of Theorem \ref{theorem: faithful}]
    If $G$ acts faithfully on $P$, then for every non-identity element $g \in G$, there exists $x \in P$ such that $g(x) \neq x$. Then at least one of $\F_1(x)$ and $\F_2(x)$ is not fixed by $g$. The leaf that is not fixed by $g$ gives at least one ideal point on $\partial P$ that is not fixed by $g$. So $g$ does not act like the identity on $\partial P$, hence $G$ acts faithfully on $\partial P$. Conversely, if $G$ acts faithfully on $\partial P$, then for every non-identity element $h \in G$, there exists $\xi \in \partial P$ such that $h(\xi) \neq \xi$. Since any point on $\partial P$ is either an ideal point of some leaf or accumulated by ideal points of leaves, by continuity of the action, there exists some leaf that is not preserved by $h$, hence the action is faithful in $P$. \par

    The second \emph{if and only if} statement follows from the first plus the fact that point-type realization is dense in $\partial P$ implies that there is no infinite product region and hence there is a bijection between a dense subset of $\partial P$ and $\Ends(\Lambda_1, \Lambda_2)$ by Proposition \ref{proposition: mapping to ends}.
\end{proof}

Note that the induced action on $\partial P$ by $G$ preserves the induced orientation on $\partial P$.

\begin{corollary}
    Let $G$ be a group acting faithfully on $P$, preserving foliations and their orientations. If $G$ acts minimally on $\partial P$, then $G$ is left-orderable.
\end{corollary}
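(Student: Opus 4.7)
The plan is to derive the corollary directly from Theorem \ref{theorem: main}. Since $G$ acts faithfully on $P$ by homeomorphisms preserving the orientations of both foliations $\F_1$ and $\F_2$, the defining action realizes $G$ as a subgroup of $\mathrm{Aut}^+(P, \F_1, \F_2)$. Theorem \ref{theorem: main} provides a left-order on $\mathrm{Aut}^+(P)$, and since left-orderability passes to subgroups, $G$ inherits a left-order. This one-line derivation makes no use of the minimality hypothesis.

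A more structural approach, which explains why minimality is highlighted in the statement, is to proceed through Theorem \ref{theorem: faithful}. The set of point-type realizations of ends is $G$-invariant in $\partial P$ (because $G$ permutes ends of leaf spaces and preserves the correspondence $\Phi_i$ of Proposition \ref{proposition: mapping to ends} up to the $G$-action), so by minimality it is either empty or dense. In the dense case, the second part of Theorem \ref{theorem: faithful} upgrades the faithful action on $P$ to a faithful action on $\mathrm{Ends}(\Lambda_1) \cup \mathrm{Ends}(\Lambda_2)$. Combining this with the $G$-invariant linear order produced by Proposition \ref{proposition: order_on_ends} exhibits $G$ as a group acting faithfully and order-preservingly on a linearly ordered set, and the standard basepoint construction used for $G/H$ inside the proof of Theorem \ref{theorem: main} then transports the order to $G$.

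The step I expect to be the main obstacle, were one to pursue this alternative route as a self-contained argument, is excluding the case where the set of point-type realizations is empty. Such a configuration would require every end of each leaf space to be realized only by an interval arising from an infinite product region, and it is not immediately clear how minimality of $G$ on $\partial P$ rules this out. Since the first, subgroup-inheritance argument is complete without addressing this, I would present that as the official proof and include the alternative only as a remark indicating the role of minimality in identifying the action on $P$ with an action on ends.
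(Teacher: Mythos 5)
Your first argument is correct and complete: a faithful, foliation- and orientation-preserving action realizes $G$ as a subgroup of $\mathrm{Aut}^+(P)$, and Theorem \ref{theorem: main} together with the heredity of left-orderability under passage to subgroups finishes the proof, with no use of minimality. This is a genuinely different (and shorter) route from the paper's. The paper instead argues through Theorem \ref{theorem: faithful}: minimality forces the orbit of any point-type realization to be dense in $\partial P$, so the set of point-type realizations is dense, so the action on $\mathrm{Ends}(\Lambda_1)\cup\mathrm{Ends}(\Lambda_2)$ is faithful, and left-orderability then follows from the order of Proposition \ref{proposition: order_on_ends}. What the paper's route buys is the stronger intermediate conclusion --- a faithful order-preserving action of all of $G$ on a linearly ordered set of ends, without passing through the quotient $G/H$ --- which is the real content the corollary is meant to showcase; what your route buys is brevity and independence from the realization machinery. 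Your flagged obstacle is well taken and in fact applies to the paper's own proof: the sentence ``the orbit of any point-type realization of an end is dense'' tacitly assumes that at least one point-type realization exists, which can fail (e.g.\ the trivial bifoliated plane has only interval-type realizations; there minimality is impossible, but the paper does not argue that minimality excludes the empty case in general). Since your subgroup-inheritance argument sidesteps this entirely, presenting it as the official proof, with the structural route as a remark, is a reasonable editorial choice --- though if the intended content is the faithful action on ends rather than bare left-orderability, the gap you identify would still need to be closed.
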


\begin{proof}
    Since $G$ acts minimally on $\partial P$, the orbit of any point-type realization of an end is dense in $\partial P$, implying that the set of point-type realization of ends is dense in $\partial P$. Then by Theorem \ref{theorem: faithful}, we get a faithful action on ends and thus obtain left-orderability of $G$. 
\end{proof}

An example of such minimal action is the Anosov-like action defined in \cite{barthelme2022orbit} on any non-trivial or skewed bifoliated plane. \par

The last result of this paper gives a more straightforward way of obtaining left-orderability of a group when the structure of leaf spaces are simpler, namely when there are only finitely many ends in some leaf space.

\begin{corollary}\label{corollary: lift to left-orderability}
If $G$ acts faithfully on $P$ preserving foliations and their orientations, and if $\Ends_+(\Lambda_i)$ or $\Ends_-(\Lambda_i)$ is finite for some $i$, then $G$ has global fixed point(s) on $\partial P$. In particular, $G$ is left-orderable.
\end{corollary}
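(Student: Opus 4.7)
The plan is to use the linear order on ends from Proposition \ref{proposition: order_on_ends} to force $G$ to fix every positive (or negative) end of $\Lambda_i$, then transport this fixation to $\partial P$ via the realization map $\Phi_i$ of Proposition \ref{proposition: mapping to ends}, and finally deduce left-orderability from a fixed point on $\partial P \cong \mathbb{S}^1$ using Theorem \ref{theorem: faithful}.

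First I would note that, by Proposition \ref{proposition: order_on_ends}, $G$ acts on $\Ends_+(\Lambda_i)$ preserving a linear order. When this set is finite (and symmetrically when $\Ends_-(\Lambda_i)$ is the finite side), the only order-preserving bijection is the identity, so $G$ fixes every element pointwise. Since $\Lambda_i$ is a non-trivial oriented $1$-manifold, such a set is non-empty, and we obtain at least one end $e$ that is fixed by all of $G$.

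Next I would pass to $\partial P$ using the realization map $\Phi_i \colon S_i \to \Ends(\Lambda_i)$, which is $G$-equivariant: a transverse ray $\tau$ landing at $\xi$ is sent by $g$ to a transverse ray landing at $g \cdot \xi$, whose $q_i$-projection is $g \cdot q_i(\tau)$. Therefore $\Phi_i^{-1}(e) \subset \partial P$ is non-empty and $G$-invariant, and by Proposition \ref{proposition: mapping to ends} it is either a single point or a closed interval. The point case is immediate. I expect the interval case to be the main obstacle: I would handle it by invoking the fact that the induced action on $\partial P$ preserves its cyclic orientation (since $G$ preserves both foliations with their orientations, and in particular preserves the orientation of $P$). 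An orientation-preserving self-homeomorphism of $\partial P \cong \mathbb{S}^1$ that sends a closed interval to itself cannot swap its endpoints, for doing so would reverse the orientation along the complementary arc; hence both endpoints are fixed individually by $G$.

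Finally, given a global fixed point $\xi \in \partial P$, the group $G$ acts on $\partial P \setminus \{\xi\} \cong \R$ by orientation-preserving homeomorphisms. By Theorem \ref{theorem: faithful}, faithfulness of the action on $P$ transfers to faithfulness on $\partial P$, and hence on $\partial P \setminus \{\xi\}$. Choosing a basepoint and comparing orbits then produces a left-order on $G$, exactly as in the closing step of the proof of Theorem \ref{theorem: main}.
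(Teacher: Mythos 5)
Your proof is correct, but it reaches the global fixed point by a genuinely different route from the paper. The paper does not use the linear order on ends at all here: it argues that the (finitely many) realizations of the relevant ends are either all globally preserved, or some non-identity $g$ permutes them in a finite cyclic orbit, which would force $g$ to act on $\partial P$ without fixed points; this is then excluded by Lemma \ref{lemma: no rotation}, whose proof goes through Brouwer's fixed point theorem on the closed disk and the quadrant structure at an interior fixed point. You instead observe that the $G$-invariant linear order on $\Ends_+(\Lambda_i)$ from Proposition \ref{proposition: order_on_ends} (equivalently Proposition \ref{order_zhao}) forces $G$ to fix every element of a finite invariant linearly ordered set, and then push a fixed end down to $\partial P$ via the $G$-equivariance of the realization map $\Phi_i$ of Proposition \ref{proposition: mapping to ends}; the interval-type case is handled by noting that an orientation-preserving circle homeomorphism preserving a proper closed arc must fix both its endpoints (swapping them would send the arc to the complementary arc). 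Your argument is arguably cleaner, as it avoids Lemma \ref{lemma: no rotation} and the Brouwer step entirely, at the cost of invoking the order machinery of Section \ref{section: main}; the paper's argument is self-contained within Section \ref{section: circle at infinity} and also rules out rotations for reasons independent of finiteness of ends. Two small points to tighten: you should note explicitly that $\Ends_+(\Lambda_i)$ is non-empty (which follows since $\Lambda_i$ contains a properly embedded copy of $\R$, as used in the proof of Theorem \ref{theorem: main}), and your closing phrase ``choosing a basepoint and comparing orbits'' understates what is needed when the stabilizer of a basepoint is non-trivial --- the correct citation is the classical fact, invoked by the paper via \cite{ghys_1984}, that a group acting faithfully on $\R$ by orientation-preserving homeomorphisms is left-orderable.
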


Before proving this corollary, we make the following observation:

\begin{lemma}\label{lemma: no rotation}
    For any non-identity $g \in G$, the action on $\partial P$ cannot be conjugate to a non-trivial rotation.
\end{lemma}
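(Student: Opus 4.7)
The plan is a proof by contradiction. Suppose $g \in G$ is non-identity and $g|_{\partial P}$ is conjugate to a non-trivial rotation by an angle $\rho$, where $\rho \neq 0$ in $\R/\Z$. First I would observe that since $g$ preserves the orientations of both foliations, it preserves the local product orientation at every transverse intersection, hence the orientation of $P$ and of $\partial P$, so conjugacy to a rotation is well-posed. Fix an identification $\partial P \cong \R/\Z$ via the conjugating homeomorphism so that $g$ acts as translation by $\rho$. Since $\rho \neq 0$, the induced map on $\partial P$ has no fixed points; in particular $g$ fixes no leaf of $\F_1$, because a fixed leaf would force both of its ideal points on $\partial P$ to be fixed (using that $g$ preserves the orientation of $\F_1$).

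I would then split into two cases according to whether $\rho$ is rational or irrational. If $\rho = p/q$ is rational with $\gcd(p,q) = 1$ and $p \neq 0$, then $g^q$ acts trivially on $\partial P$. By the faithfulness equivalence in Theorem \ref{theorem: faithful}, $g^q$ acts trivially on $P$, so $g^q = \mathrm{id}$ in $G$. But $G \subseteq \mathrm{Aut}^+(P)$ is left-orderable by Theorem \ref{theorem: main}, and hence torsion-free, contradicting that $g$ is a non-identity element of finite order.

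The irrational case is the main technical step. I would pick any leaf $l$ of $\F_1$ with ideal points $l^+, l^-$ corresponding under the identification to $0$ and $\alpha \in (0,1)$, and assume, after swapping the two arcs, that $\alpha \leq 1/2$. By density of orbits of an irrational rotation on $\R/\Z$, there exists $n \geq 1$ such that the fractional part $s$ of $n\rho$ lies in $(0, \alpha)$. The leaf $g^n(l)$ is distinct from $l$ (otherwise $g^n$ would fix the ideal points of $l$, impossible since $n\rho \notin \Z$), and has ideal points at positions $s$ and $\alpha + s$, with $\alpha + s < 2\alpha \leq 1$. The pair $\{s, \alpha + s\}$ is therefore linked with $\{0, \alpha\}$ on $\partial P$. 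This contradicts the fact that distinct $\F_1$-leaves are disjoint properly embedded lines in $P \cong \R^2$: by the Jordan curve theorem applied in the disk compactification $P \cup \partial P$, their pairs of ideal points on $\partial P$ must be unlinked.

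The main obstacle is the irrational case, where the argument genuinely uses bifoliated plane structure (disjoint $\F_1$-leaves have unlinked ideal point pairs on $\partial P$) together with density of irrational rotation orbits on the circle. The rational case reduces cleanly to the previously established Theorems \ref{theorem: main} and \ref{theorem: faithful}.
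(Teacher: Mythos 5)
Your proof is correct, but it takes a genuinely different route from the paper's. The paper's argument is a three-line application of Brouwer's fixed point theorem to the closed disk $P \cup \partial P$: a non-trivial rotation has no fixed point on $\partial P$, so $g$ must fix some $x \in P$; since $g$ preserves the foliations and their orientations, it preserves each of the four oriented half-leaves of $\F_1(x)$ and $\F_2(x)$ (the only alternative, swapping positive and negative half-leaves, would reverse the transverse orientation), hence fixes their ideal points on $\partial P$, contradicting non-triviality of the rotation. You instead split on the rotation number: the rational case is dispatched by torsion-freeness of the left-orderable group $\mathrm{Aut}^+(P)$ (Theorem \ref{theorem: main}) together with faithfulness of the boundary action (Theorem \ref{theorem: faithful}), and the irrational case by density of irrational rotation orbits plus the fact that disjoint leaves have unlinked ideal-point pairs on $\partial P$. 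Both cases check out: in the linking step the four points $0 < s < \alpha < \alpha+s < 1$ are distinct and alternate, and distinct leaves of $\F_1$ are disjoint, so a Jordan-curve argument in the closed disk gives the contradiction. The paper's proof buys brevity and independence from the earlier theorems; yours buys the avoidance of Brouwer, and the irrational half needs only that $g$ preserves $\F_1$ as an unoriented foliation. The one ingredient you should make explicit is the unlinking fact itself, which rests on the closure of a leaf in $P \cup \partial P$ being an embedded arc joining its two ideal points --- this holds in Bonatti's compactification and is implicit throughout Section \ref{section: circle at infinity}, but it is the load-bearing step of your irrational case.
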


\begin{proof}
    Suppose that some non-identity $g \in G$ acts by a non-trivial rotation on $\partial P$. Brouwer's Fixed Point Theorem says that $g$ has a fixed point $x \in P \cup \partial P$, but we know from our assumption that $x \notin \partial P$, so $x \in P$. Since foliations are non-singular and the group action preserves foliations, either $g$ preserves all 4 half-leaves of both $\F_1(x)$ and $\F_2(x)$, or $g$ switches positive and negative half-leaves of $\F_1(x)$ and does the same for $\F_2(x)$. In the former case, the action must be by the identity, and the latter reverses transverse orientation, so neither is possible.
\end{proof}

A classical criterion for left-orderability which we will make use of is the following: If a group acts faithfully on $\R$ by orientation preserving homeomorphisms, then it is left-orderable. A proof for this can be found in \cite{ghys_1984}. \par

Now we are ready for the proof:

\begin{proof}[Proof of Corollary \ref{corollary: lift to left-orderability}]
    First of all, since the induced action by $G$ on $\Lambda_i$ takes a positive (\emph{resp.} negative) end to another positive (\emph{resp.} negative) end, and since $G$ preserves foliations, a realization of an end can only be sent to another of the same type. In addition, recall that by Theorem \ref{theorem: faithful}, the induced action by $G$ on $\partial P$ is faithful. We will first show that there always exists at least one global fixed point under this action. \par

    If $\Lambda_i$ has at most one end with either the point-type or the interval-type realization, then the realizations of these ends are globally preserved by the action of $G$. If it is a point-type realization, then the point is a global fixed point of $G$; if it is an interval-type realization, then both endpoints of this interval are global fixed points of $G$. Now, if $\Lambda_i$ has more than one ends with the same given type of realization, then realizations of these ends are either all globally preserved by the action of $G$ -- in which case we get desired global fixed points as before, or in the same finite orbit by some non-identity element $g \in G$. In the later case, if realizations are of the interval-type, then negative (or positive) endpoints of these intervals are in the same finite orbit by $g$, and if realizations are of the point-type, then all of these points are in the same finite orbit by $g$. It follows that the action of $g$ is conjugate to a non-trivial rotation, contradicting Lemma \ref{lemma: no rotation}. This shows that $G$ has a global fixed point on $\partial P$. \par

    Now, pick a global fixed point $\xi$ of $G$ on $\partial P$. Then $\partial P \setminus \{\xi\} \cong \R$, so the faithful action by $G$ on $\partial P$ induces a faithful action on $\R$, and this implies that $G$ is left-orderable, see Theorem 6.8 in \cite{ghys_1984}.
\end{proof}   

\begin{comment}
    Suppose that  If the realization is a single point $\xi \in \partial P$, remove $\xi$ from $\partial P \cong \mathbb{S}^1$ -- this preserves the faithfulness of the action, then lift the faithful action by $G$ on $\partial P \backslash \{\xi\}$ to a faithful action on $\mathbb{R}$.  If the realization is a closed interval, left-orderability is obtained by removing one of its endpoints instead and lifting the faithful action to $\mathbb{R}$. \par

    Suppose that there are two ends in $\Lambda_i$. If their realizations are of different types, then both are globally fixed by $G$. From here, we can choose either realization and perform steps in the single-end case to get left-orderability of $G$. So suppose further that these ends have the same type of realization on $\partial P$. Then either they are both globally fixed by $G$, or the action by some elements in $G$ on $\partial P$ has a period-2 orbit and hence is conjugate to a rotation by $\pi$ on $\mathbb{S}^1$. The latter is impossible by the preceding lemma. So both are globally fixed by $G$. Then we choose either one and follow the steps in single-end case to get left-orderability of $G$. \par

    Finally, suppose that there are more than two ends in $\Lambda_i$. Then we must have at least two ends with the same type of realization, which are all globally fixed, and we obtain left-orderability as before.
\end{comment}

\section{Further Questions}

\subsection{Bifoliated plane with singular foliation}
In this paper, foliations of the plane were taken to be non-singular. So it is natural to ask if we can generalize our result for foliations with ($p$-prong type, $p \geq 3$) singularities. One way to obtain such bifoliated planes is by looking at orbit spaces of pseudo-Anosov flows on $3$-manifolds, following the same process as in the Anosov flow case. \par

The first obstacle in this direction is to build a linear order on the set of ends. Recall that we extended Zhao's linear order on positive ends to a linear order on all ends. But if there exists $p$-prong for some odd number $p$ in a foliation $\F_i$ , then $\F_i$ is not orientable, which makes one unable to define positive or negative ends. So a priori, a linear order will need to be defined directly on the set of all ends. It would be interesting to know if Zhao's construction can be modified to fit this requirement or a different approach is needed.

\subsection{Left-orderability of homeomorphism group of the disc}
In \cite{deroin2014groups}, the authors asked if the group of boundary-fixing and orientation-preserving homeomorphisms of the disc are left-orderable. A negative answer was given in \cite{hyde2019group}. However, Theorem \ref{theorem: main} shows that, with an extension to $P \cup \partial P \cong \mathbb{D}^2$, if the group preserves a bifoliation of the open disk together with its orientations, then it is left-orderable. One may ask the following question: what type of structures on a plane needs to be preserved by the action so that the group can be left-ordered? And conversely, if a group is left-orderable, what type of structures on a plane must the group action preserve?

\subsection{Homeomorphism group of the real line}
Let $G < \mathrm{Homeo}^+(\R)$. Then there is a trivial way of producing an action by $G$ on some bifoliated plane: Let $G$ act on two copies of $\R$, each representing the leaf space of a foliation, then we get an action by $G$ on the trivial bifoliated plane. We are interested in knowing if the converse is true, i.e.~if $G$ acts on some bifoliated plane, preserving foliations and their orientations, is $G$ isomorphic to a subgroup of $\mathrm{Homeo}^+(\R)$? Note that with Theorem \ref{theorem: main}, this is true for countable groups acting in the said ways. But it is not clear if we can replace countability with some weaker condition or drop it altogether. 

In case the answer to this question is negative, it would provide an example of a subgroup of $\mathrm{Homeo^+}(S^1)$ which is left-orderable but does not act (faithfully) on the line. At the same time, it would raise the question of which (uncountable) left-orderable subgroups of $\mathrm{Homeo}^+(S^1)$ have actions on bifoliated planes.

\printbibliography

\end{document}